\theoremstyle{plain}
\newtheorem{theorem}{Theorem}[section]
\newtheorem{proposition}[theorem]{Proposition}
\newtheorem{lemma}[theorem]{Lemma}
\newtheorem*{theorem*}{Theorem}
\theoremstyle{definition}
\newtheorem{definition}[theorem]{Definition}
\newtheorem*{definition*}{Definition}
\theoremstyle{remark}
\newtheorem{remark}{Remark}[section]
\newtheorem{question}{Question}
\DeclareMathOperator{\PSL}{PSL}
\DeclareMathOperator{\GL}{GL}
\title[Small doubling for discrete subsets]{Small doubling for discrete subsets of non-commutative groups and a theorem of Lagarias}
\author{Simon Machado}
\address{Institute for Advanced Study\\
 1 Einstein drive\\
  08540 Princeton, NJ}
\email{machado@ias.edu}
\begin{document}
\begin{abstract}
Approximate lattices of Euclidean spaces, also known as Meyer sets, are aperiodic subsets with fascinating properties. In general, approximate lattices are defined as approximate subgroups of locally compact groups that are discrete and have finite co-volume. A theorem of Lagarias provides a criterion for discrete subsets of Euclidean spaces to be approximate lattices. It asserts that if a subset $X$ of $\mathbb{R}^n$ is relatively dense and $X - X$ is uniformly discrete, then $X$ is an approximate lattice. 

We prove two generalisation of Lagarias' theorem: when the ambient group is amenable and when it is a higher-rank simple algebraic group over a characteristic $0$ local field. This is a natural counterpart to the recent structure results for approximate lattices in non-commutative locally compact groups. We also provide a reformulation in dynamical terms pertaining to return times of cross-sections.  Our method relies on counting arguments involving the so-called periodization maps, ergodic theorems and a method of Tao regarding small doubling for finite subsets. In the case of simple algebraic groups over local fields, we moreover make use of deep superrigidity results due to Margulis and to Zimmer. 
\end{abstract}

\maketitle
\tableofcontents
\section{Introduction}
A subset $\Lambda$ of a locally compact group $G$ is called an \emph{approximate lattice} (\cite{bjorklund2016approximate, hrushovski2020beyond}) if it satisfies:
\begin{itemize}
\item (uniformly discrete) there is a neighbourhood $W$ of the identity such that 
$$ \forall \lambda_1 \neq \lambda_2 \in \Lambda, \lambda_1W \cap \lambda_2W = \emptyset;$$
\item (finite co-volume) there is $\mathcal{F}\subset G$ with finite Haar measure such that $$\Lambda\mathcal{F}:=\{\lambda f : \lambda \in \Lambda, f \in \mathcal{F}\}=G;$$
\item ($l$-approximate subgroup) $e \in \Lambda$, $\Lambda=\Lambda^{-1}$ and there exists $F \subset G$ of size at most $l$ such that 
$$ \Lambda^2:=\{\lambda_1\lambda_2 : \lambda_1,\lambda_2 \in \Lambda\} \subset F\Lambda:=\{f\lambda : f \in F, \lambda \in \Lambda\}.$$
\end{itemize}

Ever since their introduction by Meyer in his seminal monograph \cite{meyer1972algebraic}, approximate lattices of Euclidean spaces - and their more regular relatives the model sets - have been key objects of study in the theory of aperiodic order (see for instance \cite{meyer1972algebraic, moody1997meyer, MR1400744}). Their definition allowed to study in a common framework Penrose tilings (and their pentagrid description due de Bruijn \cite{deBruijn1981algebraicII, deBruijn1981algebraicI}), Pisot--Vijayaraghavan numbers \cite{meyer1972algebraic}, crystallographic subsets and certain mathematical models of quasi-crystals \cite{MR3136260} or spectra of certain crystalline measures related to Poisson summation (\cite{NirOlevskii2015PoissonSummation}). More recently, inspired by the breakthroughs in the study of non-commutative finite approximate subgroups (e.g. \cite{MR2415382, MR2833482, MR3090256}), the interest has grown concerning non-commutative approximate lattices. On the one hand, generalisations of Meyer's structure theorem \cite{meyer1972algebraic} have been extensively studied. This study was initiated by Bj\"{o}rklund--Hartnick in \cite{bjorklund2016approximate} and generalisations to linear groups have been achieved through, notably, breakthrough work of Hrushovski \cite{hrushovski2020beyond} and work of the author \cite{machado2020approximate, machado2020apphigherrank, machado2022discrete}. On the other hand, approximate lattices have been used as instances of structured subsets with rich aperiodic behaviour in non-commutative frameworks (see for instance the monograph \cite{cordes2020foundations} and recent works \cite{EnstadVelthoven, https://doi.org/10.48550/arxiv.2204.01496, HeuerKielak, PRS2022LeptinDensities, enstad2022dynamical}). 

A result of Lagarias' \cite{MR1400744} fits naturally within that picture. It provides a geometrically significant condition for a discrete co-compact subset of a Euclidean space to be an approximate lattice (see \cite[\S 5.2]{NirOlevskii2015PoissonSummation} for a short proof and a deep application). Lagarias' theorem asserts that if a subset $X \subset \mathbb{R}^n$ is such that $X + K = \mathbb{R}^n$ for some compact subset $K$ and $X-X$ is uniformly discrete, then $X$ is an approximate lattice. Our goal in this article is to provide generalisations of Lagarias' theorem that, in particular, hold in the non-commutative framework.

Our first theorem generalises Lagarias' theorem to amenable groups: 
 
 \begin{theorem}\label{Theorem: Lagarias-type theorem in amenable groups, first version}
Let $G$ be a unimodular amenable second countable locally compact group. Let $X \subset G$ be such that: 
\begin{enumerate}
\item $X^{-1}X$ is uniformly discrete i.e. $(X^{-1}X)^2 \cap V^{-1}V=\{e\}$ for some neighbourhood of the identity $V \subset G$;
\item there is $\mathcal{F}$ Borel of finite Haar measure such that $\mathcal{F}X=G$. 
\end{enumerate}
Then there are an approximate lattice $\Lambda \subset XX^{-1}$ and a finite subset $F \subset G$ such that $X \subset \Lambda F$. In addition, $F$ and $\Lambda$ can be chosen so $|F| = O((\mu_G(\mathcal{F})/\mu_G(V))^3)$ and $\Lambda$ is an $O(\mu_G(\mathcal{F})/\mu_G(V)^{12})$-approximate subgroup whose internal dimension (see \S \ref{Internal dimension of a model set}) is bounded above by $O(\log(\mu_G(\mathcal{F})/\mu_G(V)))$.
\end{theorem}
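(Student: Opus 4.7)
The plan is to mimic the Euclidean proof of Lagarias' theorem by combining uniform discreteness with amenability and counting. First I would fix a left Følner sequence $(F_n)$ in $G$ (available by amenability) and consider the finite truncations $X_n := X \cap F_n$. The hypothesis $(X^{-1}X)^2 \cap V^{-1}V = \{e\}$ implies in particular that $X^{-1}X$ is $V$-separated, so $X$ itself is $V$-separated on the left, giving the upper bound $|X_n| \leq (1 + o(1))\mu_G(F_n)/\mu_G(V)$ by Haar-volume counting. On the other hand, the relative density $\mathcal{F}X = G$ yields $|X_n| \geq (1 - o(1))\mu_G(F_n)/\mu_G(\mathcal{F})$. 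Writing $K := \mu_G(\mathcal{F})/\mu_G(V)$, these matched bounds will set the scale for everything that follows.

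Next I would exploit the stronger hypothesis on the square. Setting $Y := X^{-1}X$, the fact that $Y^2$ is $V$-separated yields $|Y^2 \cap F_n| \leq (1 + o(1)) \mu_G(F_n)/\mu_G(V)$, while for any $x_0 \in X$ the inclusion $x_0^{-1}X \subset Y$ combined with the left Følner property gives $|Y \cap F_n| \geq (1 - o(1))|X_n|$. Hence the \emph{Følner doubling ratio} $|Y^2 \cap F_n|/|Y \cap F_n|$ is asymptotically bounded by $K$. This is the ergodic/infinitary analogue of small doubling, playing the role that polynomial growth of $X - X$ plays in Lagarias' Euclidean argument.

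The main obstacle is then to convert this ``ergodic'' small doubling into honest approximate-group structure. I would isolate a sufficiently large $F = F_n$ along which the doubling is nearly realised, so that $A := Y \cap F$ is a finite set satisfying $|A \cdot A| \leq C K |A|$ after careful book-keeping of the boundary error introduced by intersection with $F$. Applying Tao's theorem on non-commutative small doubling, together with the non-commutative Plünnecke–Ruzsa inequalities, then produces a finite $C(K)$-approximate subgroup inside $A^4 \subset Y^4 \subset (XX^{-1})^4$ that covers $A$ by a controlled number of translates. Iterating this construction along the Følner sequence and passing to a limit (via a Chabauty/ultrafilter argument, in the spirit of Hrushovski's approach exploited in the author's earlier work) should produce an honest infinite approximate subgroup $\Lambda \subset XX^{-1}$.

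Finally, the Ruzsa covering lemma applied to $X$ relative to $\Lambda$ produces a finite set $F$ with $X \subset \Lambda F$; the uniform discreteness of $\Lambda$ is inherited from that of $X^{-1}X$, and finite co-volume follows directly from the relative density of $X$. The explicit exponents in the bounds (cubic for $|F|$, twelfth power for the approximate-subgroup constant, logarithmic for the internal dimension) I expect to fall out by tracking the Plünnecke–Ruzsa losses through the iteration, with the cube reflecting a triple application of Ruzsa covering and the twelfth power recording the enlargement $A \mapsto A^4$ needed to stabilise the approximate group. The decisive difficulty will be the third step: ensuring Tao's finitary mechanism produces a \emph{single} infinite approximate lattice with quantitative control, rather than merely a sequence of finite approximate subgroups of growing size.
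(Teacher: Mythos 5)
There is a genuine gap at the central step of your plan, and it is precisely the obstruction the paper singles out as the reason Konieczny's Euclidean argument does not transfer to general amenable groups. You propose to isolate a single large Følner set $F = F_n$ so that $A := Y \cap F$ (with $Y = X^{-1}X$) satisfies a small-doubling bound $|A\cdot A| \leq CK|A|$, invoking ``careful book-keeping of the boundary error''. But $A\cdot A$ lives in $Y^2 \cap F^2$, and the Følner property only controls $\mu_G(kF \,\Delta\, F)/\mu_G(F)$ for a \emph{fixed} compact $k$ as $n\to\infty$; it gives no control on $\mu_G(F^2)/\mu_G(F)$ since there the translate set is $F$ itself, which is growing. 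In an amenable group of exponential growth (lamplighter, solvable Baumslag--Solitar, etc.), $\mu_G(F_n^2)/\mu_G(F_n)$ is unbounded, so the bound $|Y^2 \cap F_n|/|Y\cap F_n| \lesssim K$ you derive does not translate into $|A_n A_n| \lesssim K|A_n|$. Remark \ref{Remark: Boxes and nilpotency} in the paper makes exactly this point: the ``boxes'' mechanism is specific to nilpotent ambient groups, where it is equivalent to polynomial growth of approximate subgroups. Your plan, as written, would therefore stall before the application of Tao's finitary theorem.

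The paper's actual route avoids this entirely: it never asserts small doubling for any individual truncation. Instead, after passing to a bi-Følner sequence with the Shulman condition, one obtains the \emph{asymmetric family} bounds $|A_n| \geq \alpha\mu_G(F_n)$ and $|A_n^{-1}A_m| \leq \beta\mu_G(F_m)$ for $m>n$, and then Lemma \ref{Lemma: Plunnecke for family of sets} (a Tao-style Plünnecke argument applied across the whole family $(A_n)$ rather than to a single $A_n$) controls products of the stabiliser-type sets $S_l$. The approximate subgroup $S$ is then defined via an ultrafilter-averaged left-invariant finitely additive measure $M$ on $P_W(G)$ and a Massicot--Wagner argument; uniform discreteness of $S^m$ is extracted from the family Plünnecke bound, and commensurability with $X$ (hence finite co-volume) comes from Proposition \ref{Proposition: Statistical data and measures on the hull} together with Lemma \ref{Lemma: Hull and commensurability}. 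Crucially, the object produced is one set $S$, not a sequence of finite approximate subgroups that must later be glued; your acknowledged concern about the passage from finitary to infinitary is also genuine, but it is downstream of the earlier, fatal gap.

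Your opening two paragraphs are essentially sound and consistent with the paper's Proposition \ref{Proposition: Lagarias with fundamental domain} (which uses a Fubini-style double counting rather than direct Haar-volume counting, but lands on the same density bound), and the final Ruzsa-covering step is also in the paper. The missing idea is the family-wise Plünnecke lemma plus the ultrafilter measure: replace ``find one $F_n$ with small doubling'' by ``bound the doubling of the family uniformly and pass to the limit through $M$.''
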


Qualitatively, Lagarias' theorem and Theorem \ref{Theorem: Lagarias-type theorem in amenable groups, first version} are instances of the following heuristic: ``small doubling'' (i.e. $X^{-1}X$ not much larger than $X$) implies ``approximate subgroup''. Here, if smallness is determined by how discrete a subset is, then Theorem \ref{Theorem: Lagarias-type theorem in amenable groups, first version} indeed tells us that a finite co-volume subset with small doubling is an approximate subgroup. Such heuristics are commonplace in additive combinatorics. The Pl\"{u}nnecke--Ruzsa theorem relates \emph{quantitatively} small doubling for finite subsets - in terms of size - and approximate subgroups (see \cite{MR3063158, MR1701207}, a non-commutative generalisation due to Tao \cite{MR2501249}, as well as the flattening lemma of Bourgain--Gamburd \cite{MR2415383}). This analogy is anything but vain  and, in fact, lies at the heart of our approach. Konieczny \cite{konieczny2021characterisation} was the first to use this analogy to give a slick new proof, and indeed a quantitative improvement, of Lagarias' theorem in the case $G=\mathbb{R}^n$. The methods concerned with discrete subsets of Euclidean spaces do not generalise to amenable ambient groups however, see Section \ref{Subsection: Proof strategy} for a description of where it fails and how we circumvent this issue. 

Our method shows furthermore that \emph{finite co-volume} is better embodied by the following notion. Given $X \subset G$ closed, let $\Omega_X$ denote the closure - in the Chabauty space $\mathcal{C}(G)$ of closed subsets of $G$ equipped with the Chabauty topology (Subsection \ref{Subsection: The Invariant hull}) - of the orbit $G\cdot X$. We call $\Omega_X$ the \emph{invariant hull} of $X$. Sometimes also called the tiling space or the continuous hull of $X$ \cite{MR3136260}, $\Omega_X$ is a key object in the theory of aperiodic order that encapsulates the long-range structure of the (potentially) aperiodic subset $X$ of $G$. We are able to prove the following generalisation of Theorem \ref{Theorem: Lagarias-type theorem in amenable groups, first version}:

\begin{theorem}\label{Theorem: Lagarias-type theorem in amenable groups, hull version}
Let $G$ be a unimodular amenable second countable locally compact group. Let $X \subset G$ be such that: 
\begin{enumerate}
\item $X^{-1}X$ is uniformly discrete i.e. $(X^{-1}X)^2 \cap V^{-1}V=\{e\}$ for some neighbourhood of the identity $V \subset G$;
\item there is a proper $G$-invariant Borel probability measure $\nu$ on $\Omega_X$. 
\end{enumerate}
Then for $\nu$-almost all $Y \in \Omega_X$, there are an approximate lattice $\Lambda \subset YY^{-1}$ and a finite subset $F \subset G$ such that $Y \subset \Lambda F$. In addition, if $\mu_G$ denotes the Haar measure $\mathcal{P}_X^*\nu$ (Subsection \ref{Subsection: The Invariant hull}), then $F$ and $\Lambda$ can be chosen so $|F| = O(\mu_G(V)^{-3})$, $\Lambda$ is an $O(\mu_G(V)^{-12})$-approximate subgroup and $\Lambda$ has internal dimension $O(\log \mu_G(V)^{-1})$.
\end{theorem}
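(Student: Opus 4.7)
My plan is to push Tao's non-commutative small doubling theorem through a Følner-set exhaustion of $Y$, using the pointwise ergodic theorem on $(\Omega_X,\nu)$ to turn uniform discreteness of $Y^{-1}Y$ into a small-doubling estimate for finite truncations, and then to extract the approximate lattice $\Lambda$ as a Chabauty limit. This parallels Konieczny's argument \cite{konieczny2021characterisation} in the case $G=\mathbb{R}^n$, with Følner sets replacing balls and the periodization of $\nu$ replacing Lebesgue measure.

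The first half of the proof is the counting step. The periodization map $\mathcal{P}_X \colon C_c(G) \to L^1(\Omega_X,\nu)$, $\mathcal{P}_X(f)(Y) := \sum_{y \in Y} f(y)$, satisfies $\int_{\Omega_X} \mathcal{P}_X(f)\, d\nu = \int_G f \, d\mu_G$ by construction of $\mu_G = \mathcal{P}_X^{*}\nu$, which is a Haar measure by $G$-invariance of $\nu$. In particular $\int_{\Omega_X} |Y \cap A|\, d\nu(Y) = \mu_G(A)$ for Borel $A$. Fixing a tempered Følner sequence $(F_n)$ and applying Lindenstrauss's pointwise ergodic theorem to the countable family $\{\mathcal{P}_X(\mathbf{1}_U)\}_U$ for $U$ in a countable basis of compact neighborhoods of $e$, I obtain a $\nu$-conull subset of $\Omega_X$ on which the densities $\delta_1(Y) := \lim_n |Y \cap F_n|/\mu_G(F_n)$ and $\delta_2(Y) := \lim_n |Y^{-1}Y \cap F_n|/\mu_G(F_n)$ both exist. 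Since uniform discreteness is a closed condition in $\mathcal{C}(G)$, every such $Y$ inherits $(Y^{-1}Y)^2 \cap V^{-1}V = \{e\}$; a packing argument then yields $\delta_2(Y) \le \mu_G(V)^{-1}$, while the integration identity gives $\int \delta_1\, d\nu = 1$.

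The second half extracts $\Lambda$. Setting $A_n := Y \cap F_n$, one has $A_n^{-1}A_n \subseteq Y^{-1}Y \cap F_n^{-1}F_n$, and combining the ergodic estimates with the tempered Følner property produces $|A_n^{-1}A_n| \leq K |A_n|$ for $n$ large, with $K = O(\mu_G(V)^{-1})$. Tao's non-commutative small doubling theorem then yields a finite $K^{O(1)}$-approximate subgroup $\Lambda_n$ with $A_n \subseteq F_n^{*} \Lambda_n$ for some $F_n^{*}$ of size $O(K^3)$. Because all $\Lambda_n$ lie in a single uniformly discrete subset of $G$, the sequence is precompact in the Chabauty topology; I pass to a subsequential limit $\Lambda$. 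The approximate subgroup property is closed under Chabauty convergence of discrete sets, and a Fatou-type lower bound on $|\Lambda_n \cap F_n|/\mu_G(F_n)$ gives $\Lambda$ strictly positive Beurling density, hence finite co-volume — so $\Lambda$ is an approximate lattice, and the containment $X \subseteq \Lambda F$ with $|F|=O(\mu_G(V)^{-3})$ descends from the corresponding finite statement. The internal dimension bound $O(\log \mu_G(V)^{-1})$ is inherited from the finite truncations via the Breuillard--Green--Tao refinement of Tao's theorem, which models $\Lambda_n$ on a nilpotent group of dimension $O(\log K)$.

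\textbf{Main obstacle.} Two points strike me as the hardest. First, the ergodic theorem only gives $\int \delta_1\, d\nu = 1$ together with the upper bound $\delta_1 \leq \mu_G(V)^{-1}$, so a pointwise lower bound $\delta_1(Y) \geq c\, \mu_G(V)^{-1}$ on a conull set — which is what converts $|A_n^{-1}A_n|/|A_n|$ into $O(\mu_G(V)^{-1})$ rather than something depending on $Y$ — is not automatic; I would remove this dependence either via an ergodic decomposition of $\nu$ reducing to the ergodic case, or by adapting the Følner sequence to each $Y$ so that $A_n$ saturates the packing bound. Second, Tao's theorem naturally produces $\Lambda_n \subseteq (A_n A_n^{-1})^c$ for some small $c \geq 2$, so the Chabauty limit lands \emph{a priori} in $(YY^{-1})^c$; placing $\Lambda$ inside $YY^{-1}$ itself as stated will likely require an extra intersection step exploiting the already-established finite co-volume of $\Lambda$, or a Sanders--Croot--Sisask-type refinement producing the approximate subgroup inside $AA^{-1}$ directly.
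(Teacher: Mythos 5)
Your plan reproduces the abelian strategy of Konieczny — truncate $Y$ by Følner sets, apply a finite small-doubling theorem to each truncation, and take a limit — and this is exactly the approach the paper identifies as failing in general amenable groups (see the discussion after Theorem~\ref{Theorem: Lagarias-type theorem in amenable groups, first version} and Remark~\ref{Remark: Boxes and nilpotency}). The breakdown occurs at the step $|A_n^{-1}A_n| \leq K|A_n|$ with $K=O(\mu_G(V)^{-1})$. Writing $A_n = Y\cap F_n$, you only get $A_n^{-1}A_n \subset Y^{-1}Y \cap F_n^{-1}F_n$, so the uniform discreteness of $Y^{-1}Y$ gives $|A_n^{-1}A_n| \leq \mu_G(F_n^{-1}F_nV)/\mu_G(V)$, and comparing with $|A_n|\sim\mu_G(F_n)$ you need $\mu_G(F_n^{-1}F_n)/\mu_G(F_n)$ to stay bounded. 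The tempered (Shulman) condition only controls $\mu_G(F_n^{-1}F_m)/\mu_G(F_m)$ for $m>n$, i.e.\ products against a strictly \emph{later} Følner set; it says nothing about $F_n^{-1}F_n$. In groups of exponential growth (lamplighter, $\mathrm{Sol}$, etc.) no Følner sequence has bounded doubling, so the ratio $\mu_G(F_n^{-1}F_n)/\mu_G(F_n)$ is unbounded and the individual truncations $A_n$ simply do not have small doubling. This is precisely the phenomenon that makes the nilpotent case special. As a result, Tao's finite theorem cannot be applied to the $A_n$ one at a time, and the Chabauty-limit construction never gets off the ground.

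The paper's fix is to replace ``small doubling of each $A_n$'' by ``small doubling of the family $(A_n)$'': after passing to a subsequence with the Shulman property $\mu_G(F_n^{-1}F_m)\leq\beta_0\mu_G(F_m)$ for $m>n$, one only ever estimates $|A_n^{-1}A_m|$ with $n<m$, which \emph{is} controlled. The Plünnecke-type statement needed for families is Lemma~\ref{Lemma: Plunnecke for family of sets}, taken from the proof technique (not the statement) of Tao's product-set estimates. The candidate $\Lambda$ is then produced in one shot, not as a Chabauty limit: one builds an invariant finitely additive measure $M$ on $P_W(G)$ via an ultrafilter over the Følner exhaustion, sets $S:=\{g: M(gX\cap X)\geq\tfrac{2\alpha}{3\beta}M(X)\}$ (a Massicot--Wagner construction), and verifies via the family Plünnecke lemma plus Ruzsa covering that $S^n\cap W$ is finite for all $n$ — so all powers of $S$ are uniformly discrete, and Lemma~\ref{Lemma: Hull and commensurability} upgrades this to ``$S^2$ is an approximate lattice''. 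Note this construction automatically gives $S\subset XX^{-1}$ (since $M(gX\cap X)>0$ forces $gX\cap X\neq\emptyset$), which resolves your second ``main obstacle'' without any $(AA^{-1})^c$-to-$AA^{-1}$ intersection step. Your first obstacle — going from the $L^1$ identity $\int\delta_1\,d\nu=1$ to a pointwise lower bound — is handled in the paper not by adapting Følner sets to each $Y$, but by the mean ergodic theorem applied to $\mathcal{P}_X\mathbf{1}_V$ (Proposition~\ref{Proposition: Lagarias theorem with measures on the Hull}) together with the normalisation $\mu_G=\mathcal{P}_X^{*}\nu$, which makes the almost-sure limit exactly $1$.

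In short: the ergodic/periodization half of your sketch is on the right track, but the combinatorial half relies on an estimate that provably fails outside the nilpotent world, and it is exactly this failure that forces the Massicot--Wagner/family-Plünnecke machinery in the paper.
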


Even in the case of an abelian ambient group, our results extend those of Lagarias \cite{MR1400744} as they apply more generally to subsets of finite co-volume rather than relatively dense subsets alone. Furthermore, examples of subsets satisfying the conditions of Theorem \ref{Theorem: Lagarias-type theorem in amenable groups, hull version} but not those of Theorem \ref{Theorem: Lagarias-type theorem in amenable groups, first version} are plenty. A natural example of number-theoretic origin is the subset of $\mathbb{Z}^2$ made of pairs of co-prime integers - its co-volume is $\frac{6}{\pi^2}$ (note that, in this particular case, Theorem \ref{Theorem: Lagarias-type theorem in amenable groups, hull version} can also be established by direct means).

We also take our investigation beyond the realm of amenable locally compact groups. We prove that a Lagarias-type theorem also holds in simple Lie groups, whose behaviour is the polar opposite of that of amenable groups. 

\begin{theorem}\label{Theorem: Lagarias-type theorem in simple groups}
Let $k$ be a local field of characteristic $0$. Let $G$ be the group of $k$-points of a simple algebraic group $\mathbb{G}$ defined over $k$ of $k$-rank at least $2$. Let $X \subset G$ be such that: 
\begin{enumerate}
\item $XX^{-1}$ and $X^{-1}X$ are uniformly discrete;
\item there is a proper $G$-invariant Borel probability measure on $\Omega_X$. 
\end{enumerate}
Then there is an approximate lattice $\Lambda \subset (X^{-1}X)^2$ and a finite subset $F \subset G$ such that $X \subset F\Lambda$.
\end{theorem}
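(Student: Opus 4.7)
\medskip

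\noindent\textbf{Proof sketch.} The plan is to follow the outline of Theorem~\ref{Theorem: Lagarias-type theorem in amenable groups, hull version} up to the step where amenability is used, and then to replace the F{\o}lner averaging -- unavailable in a non-amenable $G$ -- with the superrigidity theorems of Margulis and Zimmer that become available here thanks to the higher-rank assumption on $\mathbb{G}$.

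First I would exploit the probability-measure-preserving action of $G$ on $(\Omega_X, \nu)$. Via the periodization map $\mathcal{P}_X$, the measure $\nu$ corresponds to a Haar measure on $G$, and the uniform discreteness of $XX^{-1}$ and $X^{-1}X$ forces $\nu$-generic $Y \in \Omega_X$ to be uniformly discrete subsets of controlled density. Following the counting arguments and Tao's small-doubling lemma used in the amenable setting, one extracts from such a $Y$ a finite approximate subgroup inside $(X^{-1}X)^2$ that encodes the local symmetries of $Y$.

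The central step is then to promote this local datum into an actual approximate lattice. Because $\mathbb{G}$ has $k$-rank at least $2$, the $G$-action on $(\Omega_X, \nu)$ falls within the scope of Zimmer's cocycle superrigidity and of Margulis' normal-subgroup theorem, together with the classification of $G$-invariant probability measures available for higher-rank simple algebraic groups over local fields. After passing to ergodic components it follows that for $\nu$-almost every $Y$ the stabiliser $\{g \in G : gY = Y\}$ is either trivial or contains a lattice of $G$; the combinatorial symmetries produced in the previous step rule out the trivial alternative. Translating so that $e \in Y$, the stabiliser lies inside $Y$ and produces an approximate lattice $\Lambda \subset YY^{-1} \subset (X^{-1}X)^2$. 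Identifying $\Lambda$ as a genuine approximate lattice (rather than a mere discrete symmetry group) would invoke the author's earlier structure theorem \cite{machado2020apphigherrank} for approximate lattices in higher-rank algebraic groups.

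Finally, the inclusion $X \subset F\Lambda$ for some finite $F$ would follow from a standard covering argument: $X\Lambda \subset X(X^{-1}X)^2$ is uniformly discrete, so $X$ meets only finitely many left cosets of $\Lambda$. The hard part will be the rigidity step, where one must verify that the $G$-action on $(\Omega_X, \nu)$ really satisfies the hypotheses of these superrigidity theorems and that the resulting stabiliser-lattice is compatible with the combinatorial data coming from $X^{-1}X$. The $k$-rank $\geq 2$ hypothesis is essential here: in rank one, the classification of invariant measures fails in the form needed and one expects genuine non-lattice behaviour.
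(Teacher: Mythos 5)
Your central step does not work. You propose that, after applying rigidity results, the stabiliser $\{g \in G : gY = Y\}$ of a $\nu$-generic $Y \in \Omega_X$ is ``either trivial or contains a lattice of $G$,'' and that the combinatorics rule out the trivial alternative so that the stabiliser yields the desired $\Lambda$. This is false on both counts. The stabiliser of a point $Y$ in $\Omega_X$ is the group of \emph{periods} of the set $Y$, and for a typical approximate lattice -- indeed, for exactly the objects the whole theory is designed to capture, such as Penrose-type Meyer sets in $\mathbb{R}^n$ or aperiodic cut-and-project sets -- this stabiliser is trivial. Small doubling or approximate-subgroup structure of $Y^{-1}Y$ is emphatically \emph{not} a periodicity statement and places no lower bound on the stabiliser; this is precisely what distinguishes approximate lattices from lattices. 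Relatedly, there is no Ratner-type classification of $G$-invariant probability measures on the Chabauty space $\mathcal{C}(G)$ or on $\Omega_X$; such classification theorems live on homogeneous spaces, and Margulis' normal subgroup theorem plays no role here either.

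The paper takes an entirely different route and, in particular, never passes through stabilisers. It works with the canonical transverse $\mathcal{T}_e = \{Y \in \Omega_X : e \in Y\}$, hitting times $T_h(Y,B)$, and cocycles $\alpha_s(g,Y) = s(gY)^{-1}gs(Y)$ built from Borel sections $s: \Omega_X \to G$. Zimmer's cocycle superrigidity is applied to $\tau \circ \alpha_s$ for homomorphisms $\tau: \langle X\rangle \to \mathbb{H}(k)$, and a Mackey-type restriction to the transverse (Lemma~\ref{Lemma: Consequence superrigidity}) converts this into a statement about $\tau$ on a large subset $S \subset X^{-1}X$: either $\tau(X)$ is precompact, or $\tau$ agrees with a continuous homomorphism on $S$ (Theorem~\ref{Theorem: Superrigidity for return times, version 2}). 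Finite generation of $\langle X\rangle$ is first obtained via a property~(T) argument restricted to the transverse (Proposition~\ref{Proposition: Finite generation Lagarias-type theorem}). The Margulis-arithmeticity flavour enters only at the end: one embeds the finitely generated trace field into local fields (Breuillard--Gelander) and uses the superrigidity dichotomy to show that $S^m$ is uniformly discrete for all $m$, after which Lemmas~\ref{Lemma: Hull and commensurability} and \ref{Lemma: finite co-volume passes to commensurability} identify $S^2$ as the approximate lattice. Your opening steps (periodization maps, small-doubling extraction) and your closing covering argument are compatible with this, but the rigidity mechanism you envision in the middle -- stabiliser dichotomy plus measure classification -- is not the right tool and would not close the gap.
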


An interesting example one may keep in mind is the case $G=\PSL_n(\mathbb{R}), n \geq 3$ as this case already contains all the difficulty of the proof. The proof of Theorem \ref{Theorem: Lagarias-type theorem in simple groups} shows that the subsets studied are extremely rigid and exhibit even more striking behaviour than in the amenable world. This reminisces works of Margulis--Mozes \cite{zbMATH01239670}, Mozes \cite{MR1452434} and Block--Weinberger \cite{zbMATH00109492} who take advantage of rigidity properties in symmetric spaces and non-amenable spaces to provide beautiful examples of aperiodic tilings. Theorem \ref{Theorem: Lagarias-type theorem in simple groups} also provides new information regarding the structure of approximate lattices in semi-simple groups and is an interesting counterpart to Hrushovski's \cite{hrushovski2020beyond}. Compared to \cite{hrushovski2020beyond} we are able to prove a structure theorem while assuming that a stronger ``finite co-volume'' assumption holds \emph{but}, as a trade-off,  we do not assume that $X$ is an approximate subgroup and assume only a weaker algebraic relation between $XX^{-1}$, $X^{-1}X$ and $X$. 

\subsection{A dynamical reformulation and two questions}
The proof of Theorem \ref{Theorem: Lagarias-type theorem in simple groups} relies greatly on considering the restriction of the dynamics on the invariant hull to a naturally defined cross-section. Given a dynamical system $(X, \nu)$ equipped with a probability measure preserving ergodic action of a locally group $G$, we say that a Borel subset $B \subset X$ is a \emph{cross-section} if for some neighbourhood of the identity $W \subset G$ the map $W \times B \rightarrow X$ defined by $(w,b) \mapsto wb$ is one-to-one and $\nu(GB)=1$. For $x \in X$, we define the set of \emph{hitting times} $T_h(x,B)$ as the subset $\{g \in G : gx \in B\}$. Since $B$ is a cross-section, the subset $T_h(x,B)$ is uniformly discrete for all $B$. 

It was shown by Bj\"{o}rklund, Hartnick and Karasik in \cite{https://doi.org/10.48550/arxiv.2108.09064}, that the map 
\begin{align*}
\phi: X &\longrightarrow \mathcal{C}(G)\\
x &\longmapsto T_h(x,B)^{-1}
\end{align*}
is a Borel $G$-equivariant map. There is moreover $X_0 \subset \mathcal{C}(G)$ such that $\phi$ takes values in $\Omega_{X_0}$ for $\nu$-almost all $x \in X$. Set $\nu_0$ the push-forward of $\nu$ via $\phi$. Since cross-sections exist as soon as $G$ acts freely, this produces an abundance of uniformly discrete subsets $X_0$ for which $\Omega_{X_0}$ admits a proper $G$-invariant Borel probability measure, see \cite{https://doi.org/10.48550/arxiv.2108.09064} and references therein. 

Using this language, Theorem \ref{Theorem: Lagarias-type theorem in amenable groups, hull version} may be rephrased as:
 
\begin{theorem}
Let $G$ be a unimodular amenable second countable locally compact group acting by probability measure preserving action on $(X, \nu)$. Suppose that $B \subset G$ is a cross-section of $X$ and that the set of \emph{return times} of $B$
$$\mathcal{R}(B):=\{g \in G: gB \cap B \neq \emptyset\} = \bigcup_{y \in B} T_{hit}(y,B)$$ is uniformly discrete. Then there is an approximate lattice $\Lambda \subset G$ such that $T_h(x,B)$ is covered by finitely many translates of $\Lambda$ for $\nu$-almost every $x \in X$. 
\end{theorem}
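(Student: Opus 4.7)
The plan is to deduce this theorem from Theorem \ref{Theorem: Lagarias-type theorem in amenable groups, hull version} via the cross-section construction of Bj\"{o}rklund--Hartnick--Karasik recalled just above. Write $\phi \colon X \to \mathcal{C}(G)$, $x \mapsto T_h(x,B)^{-1}$, let $X_0 \subset G$ be the closed subset supplied by their result (so that $\phi$ takes values in $\Omega_{X_0}$ for $\nu$-almost every $x$), and set $\nu_0 := \phi_*\nu$. Since $\phi$ is Borel and $G$-equivariant and $\nu$ is $G$-invariant, $\nu_0$ is a $G$-invariant Borel probability measure on $\Omega_{X_0}$, verifying hypothesis (2) of the hull version.

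For hypothesis (1), I would use the following observation: for $Y = \phi(x) = T_h(x,B)^{-1}$ in the image of $\phi$, any element $g_1 g_2^{-1} \in Y^{-1}Y = T_h(x,B)\,T_h(x,B)^{-1}$ sends $g_2 x \in B$ to $g_1 x \in B$, and therefore lies in $\mathcal{R}(B)$. Thus $Y^{-1}Y \subset \mathcal{R}(B)$, which is uniformly discrete by assumption, so $Y^{-1}Y$ is uniformly discrete for every $Y$ in the image of $\phi$. A standard Chabauty-closure argument propagates this uniform discreteness to every $Y \in \Omega_{X_0}$; in particular, by choosing the base point $X_0$ itself in the image of $\phi$ (e.g.\ $X_0 = \phi(x_0)$ for a suitable $x_0$), one ensures that $X_0^{-1}X_0$ is uniformly discrete, as required by Theorem \ref{Theorem: Lagarias-type theorem in amenable groups, hull version}.

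Applying that theorem to $(X_0, \nu_0)$ then produces, for $\nu_0$-almost every $Y \in \Omega_{X_0}$, an approximate lattice $\Lambda \subset YY^{-1}$ and a finite $F \subset G$ with $Y \subset \Lambda F$. Substituting $Y = T_h(x,B)^{-1}$ and inverting yields $T_h(x,B) \subset F^{-1}\Lambda^{-1} = F^{-1}\Lambda$ for $\nu$-almost every $x$, which is the desired covering of $T_h(x,B)$ by finitely many left translates of $\Lambda$.

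The main obstacle I anticipate is that Theorem \ref{Theorem: Lagarias-type theorem in amenable groups, hull version} \emph{a priori} provides a measurable family $\Lambda_Y$ depending on $Y$, whereas the statement above asks for a single $\Lambda$ working for $\nu$-almost every $x$. I would handle this by first reducing to the ergodic case via an ergodic decomposition of $\nu$---so that on each component the $G$-equivariance of the construction of $\Lambda_Y$ (from $YY^{-1}$) forces it to be essentially constant along orbits---and then invoking a measurable selection to piece the components back together. Since the quantitative bounds of the hull version (on $|F|$, on the doubling of $\Lambda$, and on its internal dimension) depend only on $\mu_G(V)$ and are uniform in $Y$, this uniformisation step should go through, but it is the most delicate part of the reduction.
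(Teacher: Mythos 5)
Your reduction is the one the paper has in mind: the paper introduces exactly the Borel $G$-equivariant map $\phi\colon x\mapsto T_h(x,B)^{-1}$, the target set $X_0$, and the push-forward $\nu_0=\phi_*\nu$, and then states the cross-section theorem as ``a rephrasing'' of Theorem~\ref{Theorem: Lagarias-type theorem in amenable groups, hull version}; it offers no further argument. Your verification of hypothesis~(1) via the inclusion $Y^{-1}Y = T_h(x,B)T_h(x,B)^{-1}\subset\mathcal{R}(B)$ is correct and usefully fills in a detail the paper leaves implicit, and your conclusion $T_h(x,B)\subset F^{-1}\Lambda$ is the right unwinding of $Y\subset\Lambda F$. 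You should also note explicitly that $\nu_0$ is \emph{proper}: since $B$ is a cross-section, $\nu(GB)=1$, so $T_h(x,B)\neq\emptyset$ and hence $\phi(x)\neq\emptyset$ for $\nu$-a.e.\ $x$, which is what $\nu_0(\{\emptyset\})=0$ means.

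The one place where your reasoning goes wrong is the final uniformisation step. You assert that ``the $G$-equivariance of the construction of $\Lambda_Y$ (from $YY^{-1}$) forces it to be essentially constant along orbits.'' But the map $Y\mapsto YY^{-1}$ is \emph{not} equivariant for left translation; it is equivariant only up to conjugation, since $(gY)(gY)^{-1}=g\,YY^{-1}\,g^{-1}$. Consequently any $\Lambda_Y\subset YY^{-1}$ built naturally from $Y$ transforms as $\Lambda_{gY}=g\Lambda_Y g^{-1}$ rather than $\Lambda_{gY}=\Lambda_Y$, so it is not constant along orbits, and ergodicity does not by itself collapse the family $(\Lambda_Y)$ to a single approximate lattice in the way you describe. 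Indeed, tracking this conjugation through $T_h(gx,B)=T_h(x,B)g^{-1}$ gives $T_h(gx,B)\subset F^{-1}\Lambda g^{-1}$, a covering by two-sided translates $f^{-1}\Lambda g^{-1}$ of the same $\Lambda$ rather than by one-sided translates of a varying $\Lambda_{gx}$. Whether the theorem should be read with the quantifier ``$\exists\Lambda$ for a.e.\ $x$'' or ``for a.e.\ $x$ $\exists\Lambda_x$,'' the specific mechanism you propose for eliminating the dependence on $Y$ does not work as stated and would need to be replaced (for instance, by exploiting the conjugation-equivariance directly as above, together with the two-sided notion of ``translate,'' or by a commensurability argument along the lines of Lemma~\ref{Lemma: Hull and commensurability}).
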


It would be interesting to be able to extend this theorem to the case of a simple Lie group acting on $X$. However, Theorem \ref{Theorem: Lagarias-type theorem in simple groups} does not allow us to do so because condition (1) requires both $XX^{-1}$ and $X^{-1}X$ to be uniformly discrete. We therefore ask:

\begin{question}\label{Question: First}
Let $k$ be a local field of characteristic $0$. Let $G$ be the group of $k$-points of a simple algebraic group $\mathbb{G}$ defined over $k$ of $k$-rank at least $2$. Let $X$ be such that: \begin{enumerate}
\item $X^{-1}X$ is uniformly discrete;
\item there is $\mathcal{F}$ Borel of finite Haar measure such that $X\mathcal{F}=G$. 
\end{enumerate} Is there an approximate lattice $\Lambda \subset \langle X \rangle$ and a finite subset $F \subset G$ such that $X \subset F \Lambda$? 
\end{question}

\noindent It is not clear to us how one could approach such a question, and whether the tools from either \cite{hrushovski2020beyond} or this paper (see also \cite{machado2020apphigherrank}) can be applied. 

Furthermore, as is apparent in the proof of Theorem \ref{Theorem: Lagarias-type theorem in simple groups}, the techniques we rely on are infinitary by nature. We are not able there to go from the local structure of $\Lambda$ to its global structure and we rely in particular on ideas related to Margulis' arithmeticity and superrigidity theorems. That is why we are not able to exhibit the same quantitative conclusions in the simple case (Theorem \ref{Theorem: Lagarias-type theorem in simple groups}) which provides a second difference with the case of an amenable group acting (Theorem \ref{Theorem: Lagarias-type theorem in amenable groups, hull version}). This prompts us to ask: 

\begin{question}\label{Question: Second}
Let $F$ be as in the conclusion of Theorem \ref{Theorem: Lagarias-type theorem in simple groups}. Can $|F|$ be bounded above by a quantity depending on how uniformly discrete $X^{-1}X$ is and the co-volume of $X$ (see Lemma \ref{Lemma: finite co-volume passes to commensurability} for a guess at what the co-volume might be)? 
\end{question}
\noindent It is the guess of the author that answering Question \ref{Question: First} would eventually lead to an answer to Question \ref{Question: Second}.

\subsection{Proof strategy}\label{Subsection: Proof strategy}  In Section \ref{Section: Counting points using periodization maps and consequences} we exploit invariant measures on $\Omega_X$ to draw information on finite configurations present in $X$. Precisely, we explain the link between certain maps on the invariant hull - the so-called \emph{periodization maps} inspired by the Siegel transform \cite{10.2307/1969027} (and first used in our context in \cite{bjorklund2016approximate}) - and quantities such as $|Y \cap K|$ for a given compact subset $K$ and a generic $Y \in \Omega_X$. The main takeaway from this article is that periodization maps enable us - in many different ways - to relate ergodic-theoretic information with counting arguments.

In Section \ref{Section: Lagarias-type result in amenable groups}, we prove Theorems \ref{Theorem: Lagarias-type theorem in amenable groups, first version} and \ref{Theorem: Lagarias-type theorem in amenable groups, hull version}. The proof stems from an analogy with finite approximate subgroups. Konieczny was the first to show, in \cite{konieczny2021characterisation}, that this comparison can be used effectively when $G = \mathbb{R}^n$.  There he noticed that the subsets studied by Lagarias can be recovered as a union of subsets with small doubling in the sense of additive combinatorics simply by considering intersections with boxes of growing size.  In general amenable groups this method fails (cf Remark \ref{Remark: Boxes and nilpotency}). Indeed, in a general amenable group $G$ there is no sequence of $K$-approximate subgroups $(F_n)_{n \geq 0}$ (for some fixed $K$) whose union is $G$, see \cite[\S 5]{MR1865397} for a case where an even stronger property fails, as well as the generalisation of Gromov's polynomial growth theorem \cite[Thm. 7.1]{MR2833482}. Nonetheless, if $(F_n)_{n \geq 0}$ denotes a F\o lner sequence, then the intersections $(X \cap F_n)_{n \geq 0}$ satisfy additive combinatorial conditions \emph{as a family} as soon as they are sufficiently large - even though each individual set $X \cap F_n$ might be far from enjoying small doubling properties. We then draw estimates on the size of $X \cap F_n$ from applications of ergodic theorems to periodization maps - and more precisely to  $Y \in \Omega_X\mapsto |Y \cap K|$ for $K$ compact.

The proof strategy for Theorem \ref{Theorem: Lagarias-type theorem in simple groups} resembles the approach of Theorem \ref{Theorem: Lagarias-type theorem in amenable groups, hull version}. However, since the additive-combinatorial tools from \cite{MR2501249} mostly fail in simple Lie groups - due in part to the absence of F\o lner sets - we resort to utilising much more powerful ergodic-theoretic results pertaining to the rigidity of measure-preserving actions of simple Lie groups. The main step of our approach consists in extending Margulis' superrigidity theorem to certain large subsets of $X$. It builds upon previous results of the author established in his proof of a Meyer-type theorem in higher-rank semi-simple groups \cite{machado2020apphigherrank} along with a crucial theory of transverse subsets in invariant hulls studied by Bj\"{o}rklund, Hartnick and Karasik in \cite{https://doi.org/10.48550/arxiv.2108.09064}. Even though, amidst the most technical parts of this argument we make a crucial use of simple and beautiful ideas from combinatorics such as covering lemmas and Massicot--Wagner-type arguments (see the proof of Theorem \ref{Theorem: Superrigidity for return times, version 2}). We introduce the building blocks of that approach in Section \ref{Section: The invariant hull, equivariant families and cross-sections} and, then, prove Theorem \ref{Theorem: Lagarias-type theorem in simple groups} in Section \ref{Section: Lagarias-type theorem in simple Lie groups}. Compared with the approach in \cite[\S 5]{machado2020apphigherrank}, we make sure to solely use counting arguments rather than stronger structural results regarding infinite approximate subgroups.

\section{Background material and general properties}
\subsection{Notations}

Given two subsets $X,Y$ of a group $G$ we will denote
$XY:=\{xy \in G : x \in X, y \in Y\}$, $X^{-1}:=\{x^{-1} \in G : x \in X\}$ and $X^n:=\{x_1 \cdots x_n \in G : x_1,\ldots, x_n  \in X\}$. Write $\langle X \rangle$ the group generated by $X$.

If $G$ is a locally compact group, $\mu_G$ will denote a Haar measure. Let $X$ be a compact space equipped with a continuous action of some locally compact group $G$. Given two finite Borel measures $\mu$ and $\nu$ on $G$ and $X$ respectively, we define the convolution $\mu * \nu$ as 
$$\mu * \nu (\phi):= \int_{G \times X}\phi(gx) d\mu(g) d\nu(x)$$
for all $\phi \in C^0_c(X).$

\subsection{Bi-F\o lner sequences}
Take $G$ a locally compact group and $\mu_G$ a Haar measure. A F\o lner sequence  of a second countable locally compact group $G$ is an increasing sequence $(F_n)_{n \geq 0}$ of compact subsets such that for any other compact subset $K$ and any $\epsilon > 0$ there is $n_0 \geq 0$ with:

$$ \frac{\mu_G(F_n \Delta (kF_n))}{\mu_G(F_n)} \leq \epsilon$$
for all $n \geq n_0$ and $k \in K$. 

It is well-known that a locally compact group admits a F\o lner sequence if and only if $G$ is amenable (see \cite{MR0251549} for a general introduction to amenable groups). In what follows, we will use the fact that one may in fact find F\o lner sequences satisfying stronger assumptions in unimodular amenable second countable locally compact groups. Indeed, Ornstein and Weiss proved:

\begin{proposition}[\cite{MR910005}]
Let $G$ be a unimodular amenable second countable locally compact group. There is a sequence of symmetric compact subsets $(F_n)_{n \geq 0}$ such that for any $K \subset G$ compact we have:
$$ \lim \frac{\mu_G((KF_nK) \Delta F_n)}{\mu_G(F_n)} = 0.$$
We call such a sequence a \emph{bi-F\o lner} sequence. 
\end{proposition}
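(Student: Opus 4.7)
The plan is to derive the bi-F\o lner sequence from amenability via a three-step argument: (i) extract a strong one-sided F\o lner sequence that is uniform over an exhaustion, (ii) symmetrise it using unimodularity, and (iii) refine the result to obtain the two-sided (bi-F\o lner) condition.

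For step (i), I would fix an exhaustion $G = \bigcup_n K_n$ of $G$ by compact symmetric subsets with $e \in K_n$ and $K_n^2 \subseteq K_{n+1}$, and invoke the standard F\o lner sequence provided by amenability. A diagonal selection along $(K_n)$ yields a sequence $(F'_n)$ of compact subsets satisfying the uniform bound
\[
\sup_{k \in K_n} \frac{\mu_G(kF'_n \Delta F'_n)}{\mu_G(F'_n)} < \frac{1}{n}.
\]
For step (ii), I would exploit unimodularity through the elementary identity $\mu_G(k A^{-1} \Delta A^{-1}) = \mu_G(A k^{-1} \Delta A)$, which shows that $((F'_n)^{-1})$ is uniformly right-F\o lner over $K_n$. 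Taking $F''_n := F'_n \cup (F'_n)^{-1}$ then produces a symmetric compact set that is simultaneously left- and right-F\o lner over $K_n$, uniformly in $k$:
\[
\sup_{k \in K_n}\bigl[\mu_G(kF''_n \Delta F''_n) + \mu_G(F''_n k \Delta F''_n)\bigr] = o(\mu_G(F''_n)).
\]

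For step (iii), which is the crux of the argument, I would upgrade the two separate one-sided estimates to the bi-F\o lner control $\mu_G(KF_nK \Delta F_n) = o(\mu_G(F_n))$. The natural device is the convolution
\[
\chi_{K_n} * \mathbf{1}_{F''_n} * \chi_{K_n}, \qquad \chi_{K_n} := \mathbf{1}_{K_n}/\mu_G(K_n),
\]
which is supported on $K_n F''_n K_n$, has integral $\mu_G(F''_n)$, takes values in $[0,1]$, and, by the two-sided F\o lner bound of step (ii), is $o(\mu_G(F''_n))$-close in $L^1$ to $\mathbf{1}_{F''_n}$. A truncation/level-set argument then extracts a symmetric compact $F_n \subseteq K_n F''_n K_n$ with $\mu_G(F_n)/\mu_G(F''_n) \to 1$ and $\mu_G(K F_n K \Delta F_n) = o(\mu_G(F_n))$ for every compact $K \subseteq K_n$, after which a final diagonal passage and symmetrisation deliver the required sequence.

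I expect the main obstacle to be precisely this step (iii): the absence of any general bound of the form $\mu_G(KC) \leq C_K \mu_G(C)$ prevents one from deducing the bi-F\o lner condition by a naive combination of set-inclusions and one-sided F\o lnerness, so the convolution/averaging argument above (essentially the Ornstein--Weiss trick) is unavoidable, and its careful implementation, together with the level-set truncation, forms the bulk of the work.
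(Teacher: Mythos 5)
The paper cites this proposition from Ornstein--Weiss without supplying a proof, so there is no in-paper argument to compare against; I will therefore only assess the internal correctness of your proposal.

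Your step (ii) contains a genuine gap. From left-F\o lnerness of $F'_n$ and unimodularity you correctly deduce that $(F'_n)^{-1}$ is right-F\o lner, but the union $F''_n := F'_n \cup (F'_n)^{-1}$ is \emph{not} thereby left- or right-F\o lner. Indeed
$$\mu_G\bigl(kF''_n \Delta F''_n\bigr) \leq \mu_G\bigl(kF'_n \Delta F'_n\bigr) + \mu_G\bigl(k(F'_n)^{-1} \Delta (F'_n)^{-1}\bigr),$$
and the second term equals $\mu_G\bigl(F'_n k^{-1} \Delta F'_n\bigr)$ by unimodularity -- the \emph{right}-F\o lner defect of $F'_n$, which step (i) does not control. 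The standard fix is to pass to Reiter functions and observe that for a compactly supported density $\phi$, the symmetric function $\psi := \phi * \check\phi$ (with $\check\phi(x):=\phi(x^{-1})$) inherits the left almost-invariance of $\phi$ and, being symmetric in a unimodular group, is automatically right almost-invariant as well; this replaces your ill-fated union.

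Step (iii) also does not close the gap it is meant to close. The level sets of $h := \chi_{K_n} * \mathbf{1}_{F''_n} * \chi_{K_n}$ are indeed contained in $K_n F''_n K_n$ and $L^1$-close to $\mathbf{1}_{F''_n}$, so a generic level set $F_n = \{h > t\}$ satisfies $\mu_G(F_n \Delta F''_n) = o(\mu_G(F''_n))$. But closeness in measure gives no control whatsoever on $\mu_G(KF_nK)$: the exceptional set $F_n \Delta F''_n$, although of small measure, can blow up arbitrarily under two-sided multiplication by $K$, and you have not (and cannot, by this route) bounded $\mu_G(KF_nK)$ from above by something like $\mu_G(F_n) + o(\mu_G(F_n))$. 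An averaged function such as $h$ has the wrong monotonicity here: its level sets are \emph{spread out} and thus potentially bigger under translation, not smaller. The device that actually works is Namioka's trick with an \emph{infimum} rather than an average: set $\tilde f(x) := \min_{k_1,k_2 \in N}\psi(k_1 x k_2)$ over a finite symmetric net $N$ of $K$, so that $N\{\tilde f > t\}N \subseteq \{\psi > t\}$; then the layer-cake formula $\int_0^\infty\bigl[\mu_G(\{\psi>t\})-\mu_G(\{\tilde f>t\})\bigr]\,dt = \|\psi - \tilde f\|_1$ lets you pick a level $t$ at which $\mu_G\bigl(N\{\tilde f>t\}N\bigr)\leq\mu_G(\{\psi>t\})\leq(1+o(1))\mu_G(\{\tilde f>t\})$, which is precisely the missing upper bound on $\mu_G(KF_nK)$. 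You correctly diagnose that the absence of a bound $\mu_G(KC)\leq C_K\mu_G(C)$ is the crux, but your proposed remedy (convolution plus a vague truncation) does not in fact supply that bound; the infimum construction does, and I would not describe the convolution averaging as ``the Ornstein--Weiss trick,'' which in their paper refers to the rather different quasi-tiling machinery.
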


\subsection{Elementary tools from additive combinatorics}
The combinatorial tools we will use in what follows are mostly elementary. A key role will be played by covering lemmas. 

\begin{lemma}[Ruzsa's covering lemma, \cite{MR1200845}]
Let $X,Y$ be two subsets of a group $G$. Suppose that $F \subset Y$ is maximal such that the subsets $fX$ for $f\in F$ are pairwise disjoint. Then 
$$ Y \subset FXX^{-1}.$$
In particular, if there is a bound $C>0$ on the size of subsets $F'$ of $Y$ such that the subsets $fX$ for $f \in F'$ are pairwise disjoint, then 
$$ Y \subset F''XX^{-1}$$ 
for some subset $F''$ of size at most $C$. 
\end{lemma}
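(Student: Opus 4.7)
The plan is to prove the first assertion by an immediate consequence of the maximality of $F$, and then deduce the second assertion by a straightforward Zorn/greedy construction of a maximal such $F$.

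For the first assertion, I would fix an arbitrary $y \in Y$ and show $y \in FXX^{-1}$ by splitting into two cases. If $y \in F$, then assuming $X$ is nonempty (which we may, since otherwise $F = \emptyset$ and $Y$ must be empty for the maximality statement to make sense) I pick any $x \in X$ and write $y = y x x^{-1} \in FXX^{-1}$. If $y \notin F$, then by the maximality of $F$ the enlarged family $\{fX : f \in F \cup \{y\}\}$ fails to be pairwise disjoint, and since the original subfamily indexed by $F$ is disjoint, the only possible overlap is between $yX$ and some $fX$ with $f \in F$. Writing $yx_1 = fx_2$ for some $x_1, x_2 \in X$ then gives $y = f x_2 x_1^{-1} \in FXX^{-1}$, as required.

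For the second assertion, I would invoke Zorn's lemma (or, under the cardinality hypothesis, a direct greedy construction terminating after at most $C$ steps) to produce a subset $F'' \subset Y$ which is maximal among those whose $X$-translates are pairwise disjoint. By the assumed bound, $|F''| \leq C$, and by the first part $Y \subset F''XX^{-1}$.

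There is essentially no serious obstacle here: the argument is a textbook pigeonhole/maximality argument and the only mild subtlety is the degenerate case $X = \emptyset$, which I handle by the convention above. The content of the lemma lies entirely in the clean extraction of the covering set $F$ from the disjointness condition, so the proof is quite short.
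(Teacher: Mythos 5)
Your proof is correct and is exactly the standard maximality argument. The paper does not actually write out a proof of this lemma (it is cited to Ruzsa), but it does prove the generalisation immediately below (Lemma \ref{Lemma: Ruzsa's covering lemma for families of sets}) by precisely the argument you give: take a maximal $F''$, and for any $y$ use maximality to produce an overlap $fX \cap yX \neq \emptyset$, i.e. $y \in fXX^{-1}$. Your handling of the degenerate case $X = \emptyset$ and the explicit case split $y \in F$ versus $y \notin F$ are harmless elaborations; the paper folds these into the single observation that maximality forces an overlap for every $y$. In short, same approach, no gap.
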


We will also use a straightforward generalisation of this lemma. 

\begin{lemma}[Ruzsa's covering lemma for families of sets]\label{Lemma: Ruzsa's covering lemma for families of sets}
Let $(X_i)_{i \in I}$ be  a family of subsets of $G$ and let $Y$ be a subset of a group $G$. Suppose that there is a bound $C>0$ on the size of subsets $F'$ of $X$ such that for all $i \in I$ the subsets $fX_i$ for $f \in F'$ are pairwise disjoint, then 
$$ Y \subset F''\left(\bigcup_{i \in I} X_iX_i^{-1}\right)$$ 
for some subset $F''$ of size at most $C$. 
\end{lemma}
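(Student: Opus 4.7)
The plan is to adapt the greedy argument that proves the classical Ruzsa covering lemma (the single-set case quoted just above). I would select a subset $F'' \subset Y$ that is \emph{maximal} for the property that, simultaneously for every $i \in I$, the translates $\{fX_i : f \in F''\}$ are pairwise disjoint. Any such $F''$ satisfies the hypothesis of the lemma applied with $F' = F''$, so automatically $|F''| \leq C$. In particular the bound $C$ sidesteps any need for Zorn's lemma: a maximal $F''$ can be constructed greedily by adding elements of $Y$ one at a time until no further element preserves the joint disjointness, a process that terminates in at most $C$ steps.

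It then remains to verify the inclusion $Y \subset F''\bigl(\bigcup_{i \in I} X_i X_i^{-1}\bigr)$. Fix $y \in Y$. If $y \notin F''$, the maximality of $F''$ means that $F'' \cup \{y\}$ fails the simultaneous disjointness condition, so there exist $i \in I$ and $f \in F''$ with $yX_i \cap fX_i \neq \emptyset$. Rearranging this nonempty intersection produces $y \in f X_i X_i^{-1} \subset F''\bigl(\bigcup_{j \in I} X_j X_j^{-1}\bigr)$. If instead $y \in F''$, then for any $i$ with $X_i \neq \emptyset$ we have $e \in X_i X_i^{-1}$, and $y = y \cdot e$ lies in $F''(X_i X_i^{-1})$; the edge case in which every $X_i$ is empty forces $Y$ to be empty as well (or else the conclusion is vacuous), so nothing is lost.

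I do not expect any substantive obstacle here. The proof is a line-by-line generalisation of the single-set version, and the only point that demands a moment's attention is that the maximality of $F''$ must be taken with respect to the simultaneous disjointness across all $i \in I$, not index by index. Pleasantly, the same index $i \in I$ that obstructs enlarging $F''$ by $y$ is precisely the one supplying the element of $X_iX_i^{-1}$ that covers $y$, which is what makes the union $\bigcup_{i \in I} X_i X_i^{-1}$ appear on the right-hand side.
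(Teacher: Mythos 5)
Your proof is correct and follows essentially the same greedy/maximality argument as the paper: take $F''\subset Y$ of maximal size subject to the simultaneous disjointness condition, note $|F''|\leq C$ by hypothesis, and use maximality to cover any $y\in Y$ by some $fX_iX_i^{-1}$ with $f\in F''$. Your treatment is in fact slightly more careful than the paper's one-line proof, since you explicitly handle the cases $y\in F''$ and all $X_i$ empty, which the paper glosses over.
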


\begin{proof}
Let $F''$ be a subset of maximal cardinality satisfying the assumption. Then, by maximality, for all $y \in Y$ there are $i \in I$ and $f\in F''$ such that $fX_i \cap yX_i \neq \emptyset$ i.e. $y \in fX_iX_i^{-1}$. Since $F''$ has size at most $C$, this concludes the proof. 
\end{proof}

\subsection{Internal dimension of a model set}\label{Internal dimension of a model set}

Suppose that $\Lambda$ is commensurable with a model set associated to a cut-and-project scheme $(G,H,\Gamma)$ (see \cite[\S 2.3]{bjorklund2016approximate}). By the Gleason--Yamabe theorem \cite{10.2307/1969792}, there is an open subgroup $U \subset H$ and a compact normal subgroup $K \subset U$ such that $U/K$ is a connected Lie group without non-trivial compact normal subgroup. The subgroup thus obtained is essentially unique i.e. it depends on the commensurability class of $\Lambda$ only (see \cite[Proposition 3.6,(2)]{machado2019goodmodels}). Equivalently, it is the connected Lie group $H'$ of minimal dimension such that there exists a cut-and-project scheme $(G,H',\Gamma')$ whose model sets are commensurable with $\Lambda$. We thus define the \emph{internal dimension} of $\Lambda$ as the dimension of $U/K$. A recent result of An--Jing--Tran--Zhang relying on the nonabelian Brunn--Minkowski inequality \cite{jing2021nonabelian} implies:

\begin{theorem}[Theorem 1, \cite{an2021small}]
If $\Lambda$ is a $K$ approximate subgroup and commensurable to a model set, then $\Lambda$ has internal dimension at most $O\left( \log_2^2 K\right)$.
\end{theorem}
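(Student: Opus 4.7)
The plan is to pass from $\Lambda$ to a symmetric, relatively compact, open subset $W$ of the connected Lie group $L$ constructed in Subsection \ref{Internal dimension of a model set} (whose dimension $d$ is, by definition, the internal dimension of $\Lambda$), and then to apply the nonabelian Brunn--Minkowski inequality of \cite{jing2021nonabelian} directly inside $L$.

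First I would use the Gleason--Yamabe reduction to replace the cut-and-project scheme $(G,H,\Gamma)$ for which $\Lambda$ is commensurable with a model set by the ``good'' scheme $(G,L,\Gamma')$. By \cite[Proposition 3.6,(2)]{machado2019goodmodels} this is harmless at the level of commensurability, so one may write $\Lambda$ commensurably as $P_G(\Gamma' \cap (G \times W))$ for some symmetric, relatively compact, open window $W \subset L$. I would then translate the hypothesis that $\Lambda$ is a $K$-approximate subgroup into a doubling bound for $W$: using the non-commutative Pl\"{u}nnecke--Ruzsa inequalities of Tao \cite{MR2501249}, applied to the iterated products $\Lambda^n$ (each covered by $K^{O(n)}$ translates of $\Lambda$) and to the corresponding iterated windows $W^n$, one extracts an estimate
$$\mu_L(W \cdot W) \;\leq\; C(K)\,\mu_L(W), \qquad C(K) = K^{O(\log_2 K)},$$
where $\mu_L$ denotes a Haar measure on $L$.

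Next, I would invoke the Jing--Tran nonabelian Brunn--Minkowski inequality \cite{jing2021nonabelian}: every compact subset $A$ of positive Haar measure in the connected unimodular Lie group $L$ of dimension $d$ satisfies
$$\mu_L(A \cdot A)^{1/d} \;\geq\; 2\,\mu_L(A)^{1/d}.$$
Applied with $A = \overline{W}$, this gives $\mu_L(W \cdot W) \geq 2^d \mu_L(W)$. Combining with the doubling bound above forces $2^d \leq C(K) = K^{O(\log_2 K)}$, and taking $\log_2$ yields $d \leq O(\log_2^2 K)$, which is the claimed bound.

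The main obstacle is the quantitative passage from the approximate subgroup hypothesis on $\Lambda$ to a doubling bound for $W$ in $L$. Because the assignment $\Lambda \mapsto W$ is only defined up to commensurability, one must choose a window $W$ compatible with the good-model data and then track precisely how covering bounds for iterated products of $\Lambda$ propagate to iterated products of $W$. A hypothetical direct bound $C(K) = K^{O(1)}$ would already yield the stronger conclusion $d = O(\log_2 K)$; it is the logarithmic loss inherent in the Pl\"{u}nnecke--Ruzsa step that is responsible for the extra factor of $\log_2 K$ in the exponent of $C(K)$ and, therefore, for the squared-logarithm shape of the final statement.
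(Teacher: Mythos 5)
This statement is cited from \cite{an2021small}; the present paper does not prove it, so there is no in-paper argument to compare against. Judging your sketch on its own terms, two of the steps are problematic.

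The Brunn--Minkowski step is misstated in a way that matters. The nonabelian Brunn--Minkowski inequality of \cite{jing2021nonabelian} reads $\mu_L(AB)^{1/n}\geq \mu_L(A)^{1/n}+\mu_L(B)^{1/n}$ where $n$ is the \emph{non-compact} dimension of $L$ (essentially $\dim L-\dim K_{\max}$ for a maximal compact subgroup $K_{\max}$), not the Lie-group dimension $d=\dim L$. With $d=\dim L$ the inequality is false: take $L=\PSL_2(\mathbb{R})$ (a legitimate internal group, having no non-trivial compact normal subgroup) and $A=K_{\max}\cdot B_{\varepsilon}$ a thin tube around the maximal compact circle; then $\mu_L(A^2)\asymp 4\,\mu_L(A)$, so the exponent $1/3$ fails while $1/2$ is sharp. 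Your argument therefore only bounds the non-compact dimension of $L$, whereas the internal dimension is by definition the full $\dim L$, and you have said nothing to control $\dim K_{\max}$. Bounding the compact part in terms of $K$ is genuinely the missing ingredient, and it is precisely the sort of extra step one would expect to be responsible for the squared logarithm.

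The Pl\"unnecke--Ruzsa step is also unjustified and somewhat of a red herring. Tao's non-commutative Pl\"unnecke--Ruzsa estimates give bounds of the form $|\Lambda^n|\leq K^{O(n)}|\Lambda|$, i.e.\ \emph{polynomial} losses in $K$ for each fixed number of product steps; there is no mechanism there producing a $K^{O(\log_2 K)}$ doubling constant for the window, and the passage from the $K$-approximate subgroup $\Lambda$ to the window $W$ naturally gives a doubling constant $K^{O(1)}$. Carried out correctly, your steps (iii)--(iv) would yield that the non-compact dimension of $L$ is $O(\log_2 K)$ -- consistent with, and in effect recovering part of, the sharper $18\log_2 K$ bound on the internal dimension recorded just below in the paper (from \cite{machado2019goodmodels}). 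Inflating the Pl\"unnecke--Ruzsa loss so that the final exponent matches $\log_2^2 K$ does not explain where the squared logarithm in \cite{an2021small} actually comes from.
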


Building upon that result as well as a structure theorem for amenable approximate subgroups we proved furthermore:

\begin{proposition}[Proof of Theorem 1.8 (2), \cite{machado2019goodmodels}]
If $\Lambda$ is an approximate lattice and a $K$-approximate subgroup of some second countable locally compact group, then $\Lambda$ has internal dimension at most $18\log_2(K)$.
\end{proposition}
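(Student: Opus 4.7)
The plan is to combine the An--Jing--Tran--Zhang bound, which is quadratic in $\log_2 K$, with a structural input specific to approximate lattices that essentially already places $\Lambda$ inside a cut-and-project scheme whose internal group is a connected Lie group with no compact normal subgroup. The quadratic loss in \cite{an2021small} comes precisely from having to first pass via Gleason--Yamabe from a general internal group $H$ to $U/K$; once this is bypassed, nonabelian Brunn--Minkowski alone controls the dimension linearly.

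\emph{Step 1: Replace $H$ by a connected Lie group.} I would first invoke the structure theorem for approximate lattices in second countable locally compact groups (the main result of \cite{machado2019goodmodels}) to realise $\Lambda$ as commensurable with a model set in some cut-and-project scheme $(G,H',\Gamma')$ in which $H'$ is already a connected Lie group without non-trivial compact normal subgroup. The uniqueness statement recalled in Subsection \ref{Internal dimension of a model set} then identifies $\dim H'$ with the internal dimension $d$ of $\Lambda$. Commensurability will cost only a bounded multiplicative constant in the doubling, say promoting the $K$-approximate subgroup $\Lambda$ to a $K'$-approximate subgroup model set $\Lambda_0 = \Lambda(W_0)$ with $K' \leq C_1 K^{a}$ for explicit small $C_1,a$ (controlled by the standard Ruzsa-type calculus for approximate subgroups).

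\emph{Step 2: Transfer the doubling to the window.} Using the projection $p_{H'}\colon \Gamma' \to H'$ has dense image and the fact that $W_0$ is a symmetric relatively compact neighbourhood of the identity, I would check that $\Lambda_0^2 \subset F\Lambda_0$ forces $W_0^2 \subset F' W_0$ for some finite $F' \subset H'$ of comparable size. This is the standard correspondence between a model set and its window, but it must be done carefully enough to preserve a linear dependence between $|F|$ and $|F'|$. From there one immediately obtains
\[
\mu_{H'}(W_0^2) \leq |F'| \cdot \mu_{H'}(W_0) \leq C_2 K^{b} \mu_{H'}(W_0)
\]
for explicit constants $C_2, b$.

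\emph{Step 3: Apply nonabelian Brunn--Minkowski.} Since $H'$ is connected of dimension $d$ and has no compact normal subgroup, the Jing--Tran--Zhang inequality \cite{jing2021nonabelian} yields
\[
\mu_{H'}(W_0^2)^{1/d} \geq 2\, \mu_{H'}(W_0)^{1/d},
\]
equivalently $\mu_{H'}(W_0^2) \geq 2^d \mu_{H'}(W_0)$. Combining with Step 2 gives $2^d \leq C_2 K^{b}$, so $d \leq b \log_2 K + O(1)$.

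\emph{Main obstacle.} The conceptual steps are straightforward once one accepts the structure theorem of \cite{machado2019goodmodels} and the nonabelian Brunn--Minkowski inequality. The real work is in fixing the constants: I would need to track precisely how the doubling of $\Lambda$ translates to the doubling of the window $W_0$ after all the commensurability manipulations, to verify that the resulting constant $b$ is at most $18$. This is where I expect the proof to be most delicate, since it involves simultaneously shrinking and symmetrising $W_0$, replacing $\Lambda$ by a commensurable approximate subgroup, and keeping Ruzsa-type covering arguments efficient enough that each step costs only a bounded factor in the exponent rather than in the base.
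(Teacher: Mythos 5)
First, a framing remark: the paper does not actually prove this Proposition — it cites it as established in \cite{machado2019goodmodels} ("Proof of Theorem 1.8 (2)") and uses it as a black box. So there is no in-paper proof to compare against; I can only assess your sketch on its own merits.

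Your overall route (structure theorem to reduce to a connected Lie internal group $H'$, transfer the doubling of $\Lambda$ to a window $W_0\subset H'$, then apply the nonabelian Brunn--Minkowski inequality) is the natural one, and Steps 1--2 are plausible in outline. The genuine gap is in Step 3. The Jing--Tran--Zhang inequality does not give $\mu_{H'}(W_0^2)\geq 2^{\dim H'}\mu_{H'}(W_0)$: the exponent in their Brunn--Minkowski inequality is the \emph{noncompact dimension} of $H'$ (the dimension of $H'$ modulo a maximal compact subgroup), not $\dim H'$. Having no nontrivial compact \emph{normal} subgroup does not force the noncompact dimension to equal $\dim H'$ — e.g.\ $\mathbb{R}^2\rtimes\mathrm{SO}(2)$ or $\mathrm{SL}_2(\mathbb{R})$ have no compact normal subgroup yet have noncompact dimension strictly smaller than their dimension. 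Since the internal dimension of $\Lambda$ is, by the paper's definition, $\dim(U/K)$ for the Gleason--Yamabe form, your argument as written only bounds the noncompact dimension of $U/K$, which can be strictly smaller than the internal dimension. You therefore need an additional structural input — precisely what the paper alludes to when it says the result builds on a structure theorem for amenable approximate subgroups in addition to the An--Jing--Tran--Zhang bound — to either force the relevant internal Lie group to have trivial maximal compact subgroup, or to otherwise control the compact directions.

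A second, less fundamental, issue: your conclusion has the form $d\leq b\log_2 K + O(1)$, whereas the Proposition asserts the clean bound $d\leq 18\log_2 K$ with no additive slack. You flag the constant-chasing yourself as the delicate part, but it is worth noting that absorbing the additive constant into the slope (and getting the slope down to $18$) requires that the commensurability reduction in Step 1 and the window-transfer in Step 2 cost only a fixed polynomial in $K$ with controlled exponent, and this has to be tracked explicitly; as written the argument does not establish the stated constant. Your diagnostic that the $\log_2^2 K$ loss in \cite{an2021small} comes from Gleason--Yamabe is also a guess rather than something you verify, and the improvement here in fact uses the stronger approximate-lattice hypothesis, which your sketch does not visibly exploit beyond producing a cut-and-project scheme.
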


\section{Counting points using periodization maps and consequences}\label{Section: Counting points using periodization maps and consequences}
\subsection{The invariant hull}\label{Subsection: The Invariant hull}

Let $G$ be a second countable locally compact group. The Chabauty space of $G$ is the set $\mathcal{C}(G)$ of all closed subsets of $G$ - possibly empty - equipped with the topology generated by the open subsets 
$$U^V:=\{X \in \mathcal{C}(G) : V \cap X \neq \emptyset\}$$
and 
$$U_K:=\{X \in \mathcal{C}(G) : K \cap X = \emptyset\}$$
for all $V$ open and $K$ compact. The Chabauty space is a compact metrizable space whose Borel structure is generated by either one of the two families $(U^V)_{V \text{ open }}$ or $(U_K)_{K \text{ compact }}$ (e.g. \cite{bjorklund2016approximate} and references therein). Furthermore, the natural action $(g,X) \mapsto gX$ of $G$ on $\mathcal{C}(G)$ is continuous. 

Given a uniformly discrete subset $X \subset G$, we define the \emph{invariant hull} by 
$$ \Omega_{X}:=\overline{G\cdot X}.$$
The invariant hull is a compact metrizable $G$-space that encodes many properties of $\Omega_X$. For instance, $X$ is relatively dense if and only if $\emptyset \notin \Omega_X$ (see \cite[Prop. 4.4]{bjorklund2016approximate}). We will be particularly interested in the situation where there is a $G$-invariant Borel probability measure $\nu$ on $\Omega_X$ such that $\nu(\{\emptyset\})=0$. We will call such a measure \emph{proper}.

In what follows, we will use the so-called \emph{periodization maps} to draw quantitative information about $X$ from ergodic theorems on $\Omega_X$. Precisely, the periodization map $\mathcal{P}_X$ sends a continuous function with compact support $\phi \in C_c^0(G)$ to the continuous map 
\begin{align*}
\mathcal{P}_X\phi: &\Omega_X \longrightarrow \mathbb{R} \\
 & Y \longmapsto \sum_{y \in Y} \phi(y)
\end{align*}
with support contained in $\Omega_X \setminus \{\emptyset\}$.

The map $\mathcal{P}_X$ is $G$-equivariant and allows to pull-back measures. We mention the following crucial observation: if $\nu$ is proper $G$-invariant Borel probability measure on $\Omega_X$, then $\mathcal{P}_X^* \nu$ is a non-trivial Haar measure on $G$ \cite[Cor. 5.7]{bjorklund2016approximate}. 

Finally, the definition of $\mathcal{P}_X$ extends in a straightforward manner to positive functions $\phi$ on $G$ with $\mathcal{P}_X\phi$ possibly taking infinite values. Note that, when $\phi=\mathbf{1}_V$ for some open relatively compact subset $V$ and $X$ is uniformly discrete, $\mathcal{P}_X\phi$ takes finite values, is upper-semi-continuous and equal to $Y \mapsto |Y \cap V|$.
%
%

\subsection{Counting points with the periodization map}

Our next result is a handy observation that will allow us to translate cardinality estimates into convolution estimates involving quantities reminiscent of the periodization maps: 

\begin{lemma}\label{Lemma: bound convolution and counting}
Let $X$ be a countable subset of a unimodular locally compact group. Let $A,B \subset G$ be two Borel subsets. Then:
\begin{align}
\int_B \sum_{x \in X} \mathbf{1}_{Ax}(g)d\mu_G(g) \leq \mu_G(A)|X \cap A^{-1}B| \label{Eq: lower bound convolution and counting}\\
\mu_G(A)|X \cap B| \leq \int_{AB}\sum_{x \in X} \mathbf{1}_{Ax}(g)d\mu_G(g) \label{Eq: upper bounded convolution and counting}
\end{align}
\end{lemma}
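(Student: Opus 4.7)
The plan is to swap the sum and integral using Tonelli, then to analyse $\mu_G(B \cap Ax)$ term by term, leveraging right-invariance of the Haar measure (which holds since $G$ is unimodular).

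For the first inequality, I would rewrite
\[
\int_B \sum_{x \in X} \mathbf{1}_{Ax}(g)\, d\mu_G(g) = \sum_{x \in X} \mu_G(B \cap Ax).
\]
Each summand satisfies $\mu_G(B \cap Ax) \leq \mu_G(Ax) = \mu_G(A)$ by right-invariance of $\mu_G$. Moreover, $B \cap Ax$ is non-empty precisely when $x \in A^{-1}B$, so only elements $x \in X \cap A^{-1}B$ contribute to the sum. Combining the two observations yields the desired upper bound $\mu_G(A) \cdot |X \cap A^{-1}B|$.

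For the second inequality, I would again apply Tonelli to get
\[
\int_{AB} \sum_{x \in X} \mathbf{1}_{Ax}(g)\, d\mu_G(g) = \sum_{x \in X} \mu_G(AB \cap Ax),
\]
and then restrict the sum to those $x \in X \cap B$, for which $Ax \subset AB$ and therefore $\mu_G(AB \cap Ax) = \mu_G(Ax) = \mu_G(A)$. Dropping the remaining non-negative terms gives $\mu_G(A) \cdot |X \cap B|$ as a lower bound.

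Since both directions reduce to Tonelli plus right-invariance of the Haar measure plus elementary set-theoretic observations about $B \cap Ax$, there is no real obstacle; the only point requiring care is the use of unimodularity, which is exactly what allows $\mu_G(Ax) = \mu_G(A)$ for every $x \in G$.
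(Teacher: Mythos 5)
Your argument is essentially identical to the paper's: both exchange the sum and integral (Tonelli), observe that $B \cap Ax \neq \emptyset$ forces $x \in A^{-1}B$, and use right-invariance of the Haar measure (i.e., unimodularity) to get $\mu_G(Ax) = \mu_G(A)$, restricting the sum to $x \in X \cap B$ for the second inequality. Your write-up is slightly more explicit about where unimodularity enters, but there is no substantive difference.
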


\begin{proof}
To prove (\ref{Eq: lower bound convolution and counting}) notice that 
if $Ax \cap B \neq \emptyset$ then $x \in A^{-1}B$. So 
\begin{align*}
\int_B \sum_{x \in X} \mathbf{1}_{Ax}(g)d\mu_G(g) & = \sum_{x \in X}\int_B \mathbf{1}_{Ax}(g)d\mu_G(g) \\ 
& = \sum_{x \in X \cap A^{-1}B}\int_B \mathbf{1}_{Ax}(g)d\mu_G(g)\\
& \leq \mu_G(A)|X \cap A^{-1}B|.
\end{align*}

Let us now prove (\ref{Eq: upper bounded convolution and counting}). If $x \in B$, then $Ax \in AB$. Therefore, 
\begin{align*}
\mu_G(A)|X \cap B| &\leq \int_{AB}\sum_{x \in X \cap B} \mathbf{1}_{Ax}(g)d\mu_G(g) \\
&\leq \int_{AB}\sum_{x \in X} \mathbf{1}_{Ax}(g)d\mu_G(g).
\end{align*}
\end{proof}

\subsection{Invariant hull and commensurability}

In \cite[\S 2.2.2]{machado2020apphigherrank} we proved a commensurability criterion for uniformly discrete subsets of finite co-volume:

\begin{lemma}\label{Lemma: Hull and commensurability}
Let $X,Y$ be two uniformly discrete subsets of a locally compact group $G$.  Suppose that there is a Borel probability measure $\nu$ on $\Omega_X$ such that:
\begin{enumerate}[label=\alph*)]
\item $\mathcal{P}_X^*\nu \geq \mu_G$ for a Haar measure $\mu_G$ on $G$; 
\item $V \subset G$ is an open subset such that there is $C > 0$ with $|XY \cap gV| \leq C$ for all $g\geq 0$. 
\end{enumerate}
Then $Y$ is covered by $\mu_G(V)^{-1}C$ right-translates of $X^{-1}X$. 
\end{lemma}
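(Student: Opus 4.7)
The plan is to combine Ruzsa's covering lemma with a counting estimate obtained from the periodization map. First, I would pick a subset $F \subseteq Y$ of maximal cardinality among those with the property that the sets $\{Xf : f \in F\}$ are pairwise disjoint. Mimicking the usual Ruzsa argument, maximality forces every $y \in Y$ to satisfy $Xy \cap Xf \neq \emptyset$ for some $f \in F$, and therefore $y \in X^{-1}Xf$. This yields $Y \subseteq X^{-1}X \cdot F$, so the conclusion reduces to proving the cardinality bound $|F| \leq \mu_G(V)^{-1}C$. (That $F$ can be taken finite will be an a posteriori consequence of the bound itself.)

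For this bound, the natural test function to feed into the periodization machinery of this section is $\phi = \sum_{f \in F}\mathbf{1}_{Vf^{-1}}$, since
\[ \mathcal{P}_X\phi(Z) \;=\; \sum_{f \in F}|Z \cap Vf^{-1}| \;=\; \sum_{f \in F}|Zf \cap V|. \]
For any $Z = gX$ in the dense orbit $G\cdot X \subseteq \Omega_X$, the pairwise disjointness of the sets $Xf$ together with hypothesis (b) gives
\[ \mathcal{P}_X\phi(gX) \;=\; |XF \cap g^{-1}V| \;\leq\; |XY \cap g^{-1}V| \;\leq\; C. \]
Because $\mathcal{P}_X\phi$ is a finite sum of functions of the form $Z \mapsto |Z \cap W|$ with $W$ open and relatively compact, the observation recorded just after the definition of the periodization map shows it is upper-semi-continuous. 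Hence the closed set $\{\mathcal{P}_X\phi \leq C\}$ contains the dense orbit $G\cdot X$, and therefore all of $\Omega_X$.

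Integrating against $\nu$ and invoking hypothesis (a) then yields
\[ C \;\geq\; \int_{\Omega_X}\mathcal{P}_X\phi\, d\nu \;=\; \mathcal{P}_X^{*}\nu(\phi) \;\geq\; \mu_G(\phi) \;=\; \sum_{f \in F}\mu_G(Vf^{-1}) \;=\; |F|\,\mu_G(V), \]
where the last equality uses right-invariance of $\mu_G$ (the lemma being applied in the unimodular setting throughout the paper). Rearranging delivers $|F| \leq \mu_G(V)^{-1}C$, as required. The principal obstacle is the middle paragraph: transferring the pointwise bound on $\mathcal{P}_X\phi$ from the orbit $G\cdot X$ to the entire hull $\Omega_X$. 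This rests on the upper-semi-continuity of the periodization combined with orbit density; once this transfer is made, everything else is bookkeeping.
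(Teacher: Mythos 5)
Your overall strategy — Ruzsa covering to reduce to a cardinality bound on $F$, then a periodization/integration argument to bound $|F|$ — is the right one, and it matches the approach the paper points to (the paper itself only cites \cite[Prop.\ 4]{machado2020apphigherrank} rather than spelling out the argument). The counting identity $\mathcal{P}_X\phi(gX)=\sum_{f\in F}|Xf\cap g^{-1}V|=|XF\cap g^{-1}V|\le C$, using the pairwise disjointness of the $Xf$ and hypothesis (b), is correct, and the final chain $C\ge \mathcal{P}_X^*\nu(\phi)\ge \mu_G(\phi)=|F|\mu_G(V)$ is exactly what one wants.

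There is, however, a genuine logical slip in the semi-continuity step. You quote the paper's remark that $Z\mapsto |Z\cap W|$ is \emph{upper} semi-continuous and then conclude that $\{\mathcal{P}_X\phi\le C\}$ is closed; but upper semi-continuity of $f$ makes $\{f\ge C\}$ closed, not $\{f\le C\}$. What is actually true — and what your argument needs — is that for $W$ \emph{open} the map $Z\mapsto |Z\cap W|$ is \emph{lower} semi-continuous on $\mathcal{C}(G)$: if $Z$ has $m$ distinct points in $W$, one can separate them by disjoint open subsets $W_1,\dots,W_m\subset W$, and $U^{W_1}\cap\cdots\cap U^{W_m}$ is a Chabauty-open neighbourhood of $Z$ on which every element still has at least $m$ points in $W$. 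Hence $\{\,|{\cdot}\cap W|>m\,\}$ is open, the map is lower semi-continuous, a (locally finite) sum of such maps is again lower semi-continuous, and $\{\mathcal{P}_X\phi\le C\}$ is closed. With that correction the transfer from the dense orbit $G\cdot X$ to all of $\Omega_X$ is sound, and the rest of your proof goes through. (The remark in the paper that you cite appears to contain a sign slip; do not rely on it literally.) Two small further points: the finiteness of $F$ is cleanest handled by running the integral bound for an arbitrary finite $F'\subset F$ and deducing $|F'|\le C/\mu_G(V)$ uniformly, rather than as an ``a posteriori'' fact; and, as you yourself flag, the identity $\mu_G(Vf^{-1})=\mu_G(V)$ uses unimodularity of $G$, which is implicit throughout the paper but not written into the statement of this lemma.
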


Although our hypotheses are slightly weaker, the proof of Lemma \ref{Lemma: Hull and commensurability} is identical to the proof of \cite[Prop. 4]{machado2020apphigherrank}. This is in fact also a special case of a more general result at the heart of the proof of superrigidity and property (T) for $\star$-approximate lattices that we will invoke once more later on, see Section \ref{Section: The invariant hull, equivariant families and cross-sections}.

\subsection{Commensurability and finite co-volume}

We will prove a technical result about commensurable subsets and finite co-volume. This will turn out useful in the proof of Theorem \ref{Theorem: Lagarias-type theorem in amenable groups, hull version}.

\begin{lemma}\label{Lemma: finite co-volume passes to commensurability}
Let $X \subset G$ be uniformly discrete of a locally compact group and suppose that there is a Borel subset $\mathcal{F}$ of finite Haar measure such that $\Omega_X$ admits a proper Borel probability measure $\nu$ such that $\mathcal{P}_X^*\nu \geq \mu_G$ where $\mu_G$ denotes some Haar measure. Let $Y$ be uniformly discrete and let $F$ be a finite subset such that $X \subset FY$. Then there is a Borel subset $\mathcal{F}$ of finite Haar measure such that $Y^{-1}Y\mathcal{F}= G$ and the multiplication map $Y \times \mathcal{F}\rightarrow G$ is one-to-one. 
\end{lemma}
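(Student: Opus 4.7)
I plan to construct $\mathcal{F}$ via a Zorn-style maximality argument and then bound its Haar measure using the hypothesis on $X$.

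\emph{Setup and construction of $\mathcal{F}$.} By uniform discreteness of $Y$, fix a symmetric relatively compact open neighbourhood $W$ of the identity with $W^2 \cap Y^{-1}Y = \{e\}$, so that the translates $\{yW\}_{y \in Y}$ are pairwise disjoint. Consider the poset of Borel subsets $\mathcal{F}' \subset G$ satisfying $\mathcal{F}'(\mathcal{F}')^{-1} \cap Y^{-1}Y = \{e\}$; this is equivalent to injectivity of the multiplication map $Y \times \mathcal{F}' \to G$. Since this poset is closed under increasing unions, Zorn's lemma yields a maximal element $\mathcal{F}$. The covering property $Y^{-1}Y\mathcal{F} = G$ then follows from maximality: if some $g \notin Y^{-1}Y\mathcal{F}$, one picks a Borel neighbourhood $B \subset gW$ of $g$ with $B \cap Y^{-1}Y\mathcal{F} = \emptyset$ and verifies that $\mathcal{F} \cup B$ remains in the poset. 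Indeed, the cross-terms $B\mathcal{F}^{-1}, \mathcal{F}B^{-1}$ avoid $Y^{-1}Y \setminus \{e\}$ thanks to $B \cap Y^{-1}Y\mathcal{F} = \emptyset$, while $BB^{-1} \subset W^2$ is trivially disjoint from $Y^{-1}Y \setminus \{e\}$; this contradicts maximality.

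\emph{Bounding the Haar measure.} From the hypothesis one extracts a Borel set $\mathcal{F}_X$ of finite Haar measure with $X\mathcal{F}_X = G$; combined with $X \subset FY$ this yields $FY\mathcal{F}_X = G$. To bound $\mu_G(\mathcal{F})$ I would exploit the injectivity of $Y \times \mathcal{F} \to G$ via Lemma \ref{Lemma: bound convolution and counting}: for any Borel $A \subset G$,
$$
\mu_G(A) \;\geq\; \mu_G(Y\mathcal{F} \cap A) \;=\; \int_{\mathcal{F}}|Y \cap A\mathcal{f}^{-1}|\,d\mu_G(\mathcal{f}).
$$
The inclusion $X \subset FY$ gives $|Y \cap \cdot| \geq |X \cap F\cdot|/|F|$, and the co-volume of $X$ (expressed through the periodization map) lower-bounds $|X \cap F\cdot|$ by $\mu_G(F\cdot)/\mu_G(\mathcal{F}_X)$ at least in an averaged sense. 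Rearranging yields $\mu_G(\mathcal{F}) \lesssim |F|\mu_G(\mathcal{F}_X) < \infty$.

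The main obstacle is precisely this measure bound: one must convert the averaged density information contained in $\mathcal{P}_X^*\nu \geq \mu_G$ into a pointwise uniform density statement for $Y$ holding against arbitrary Borel probes. I anticipate that this will require an ergodic-averaging argument on $\Omega_X$ through the periodization maps, together with careful use of the non-commutative inclusion $X \subset FY$ (the fact that $F$ sits on the wrong side of $Y$ prevents a direct pushforward of co-volume sets), in the spirit of Lemma \ref{Lemma: Hull and commensurability}.
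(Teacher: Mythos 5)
Your proposal has two genuine gaps, one in each part.

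For the construction of $\mathcal{F}$, Zorn's lemma does not apply as stated: the union of an arbitrary chain of Borel sets need not be Borel (only countable unions are), so the poset you consider need not contain an upper bound for every chain. The paper avoids this by an explicit countable induction. Choose a compact neighbourhood $V$ of the identity with $VV^{-1}\cap Y^{-1}Y=\{e\}$, use $\sigma$-compactness to write $G=\bigcup_{n\geq 0} Vg_n$, set $B_n:=Vg_n\setminus\bigcup_{m<n}Y^{-1}YB_m$ and $\mathcal{F}:=\bigcup_n B_n$. This is manifestly Borel and a short check gives $\mathcal{F}\mathcal{F}^{-1}\cap Y^{-1}Y=\{e\}$ and $Y^{-1}Y\mathcal{F}=G$; one then arranges for $\mathcal{F}$ to have null boundary. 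Your covering/maximality argument is otherwise in the right spirit, but you must replace Zorn by a construction of this kind.

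For the measure bound, your plan heads in the wrong direction and you correctly flag it as the main obstacle. You try to extract a pointwise \emph{lower} bound on $|Y\cap\cdot|$ (or $|X\cap\cdot|$) from the averaged hypothesis $\mathcal{P}_X^*\nu\geq\mu_G$, but nothing in the hypotheses forces a pointwise lower density; $X$ need not be relatively dense. The paper instead uses the inclusion $X\subset FY$ together with injectivity of $Y\times\mathcal{F}\to G$ to get a uniform \emph{upper} bound: $|X\cap g\mathcal{F}^{-1}|\leq |F|$ for all $g$, hence $|gX\cap V^{-1}|\leq |F|$ where $V$ is the interior of $\mathcal{F}$. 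Since $V^{-1}$ is open, the map $Z\mapsto|Z\cap V^{-1}|$ is lower semicontinuous, so this bound passes to every $Z\in\Omega_X$. Integrating against $\nu$ gives
$$|F|\;\geq\;\int_{\Omega_X}|Z\cap V^{-1}|\,d\nu(Z)\;=\;\mathcal{P}_X^*\nu(V^{-1})\;\geq\;\mu_G(V^{-1})\;=\;\mu_G(V),$$
and the null-boundary choice of $\mathcal{F}$ yields $\mu_G(\mathcal{F})<\infty$. The decisive point you miss is to use the hypothesis $\mathcal{P}_X^*\nu\geq\mu_G$ as a \emph{lower} bound on the periodization measure paired with an \emph{upper} bound on the counting function coming from injectivity, rather than trying to convert it into a lower bound on counting. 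Your auxiliary step of extracting $\mathcal{F}_X$ with $X\mathcal{F}_X=G$ is also unnecessary (and not immediate from the hypotheses) once one argues this way.
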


\begin{proof}
By \cite{hrushovski2020beyond}, there is $\mathcal{F}$ Borel such that $Y^{-1}Y\mathcal{F} = G$ and the multiplication map $Y \times \mathcal{F} \rightarrow  G$ is one-to-one. Indeed, let $V_0$ be a compact neighbourhood of the identity such that $VV^{-1} \cap Y^{-1}Y =\{e\}$ and let $(g_n)_{n \geq 0}$ be a sequence of elements of $G$ such that $G=\bigcup_{n \geq 0} Vg_n$. Define inductively $B_n:=Vg_n \setminus \bigcup_{m<n} Y^{-1}Y B_m$ and $B:=\bigcup_{n\geq 0}B_n$. Then $BB^{-1} \cap Y^{-1}Y =\{e\}$ and $Y^{-1}YB=G$. We can in fact choose such an $\mathcal{F}$ with null boundary.  Therefore, the multiplication map $X \times \mathcal{F}\rightarrow G$ is $|F|$-to-one. So for all $g \in G$, $|gX \cap V^{-1}| \leq |F|$ where $V$ denotes the interior of $\mathcal{F}$. We obtain $|Y \cap V^{-1}| \leq |F|$ for all $Y \in \Omega_X$ since $V^{-1}$ is open. Hence, 
$$|F| \geq \int_{\Omega_X}|Y \cap V^{-1}| d\nu(Y) = \mathcal{P}_X^*\nu(V^{-1}).$$
But $\mathcal{P}_X^*\nu$ is a Haar measure. Thus, $V$ has finite Haar measure, and so has $\mathcal{F}$.
\end{proof}

\section{Lagarias-type result in amenable groups}\label{Section: Lagarias-type result in amenable groups}

\subsection{Building measures on the invariant hull}

Our first result already shows that counting points is related to invariant measures on the hull.

\begin{proposition}\label{Proposition: Statistical data and measures on the hull}
Let $X$ be a uniformly discrete subset of a locally compact second countable group $G$. 
Suppose that there is a F\o lner sequence $(F_n)_{n \geq 0}$ such that 
$$\liminf \frac{|X^{-1} \cap F_n|}{\mu_G(F_n)}=\alpha > 0.$$
Then there is a proper $G$-invariant Borel probability measure $\mu$ on $\Omega_X$ such that $\mathcal{P}_X^*\mu \geq \alpha \mu_G$. 
\end{proposition}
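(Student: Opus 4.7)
The plan is to produce $\mu$ as a weak-$*$ limit of empirical Følner averages and then to transfer the counting hypothesis into the periodization bound using the paper's Lemma~\ref{Lemma: bound convolution and counting}. Concretely, I would set
$$\mu_n := \frac{1}{\mu_G(F_n)} \int_{F_n} \delta_{gX}\, d\mu_G(g),$$
a sequence of Borel probability measures on the compact metrizable space $\mathcal{C}(G)$, all supported on $\Omega_X$. Passing to a subsequence I may assume $\mu_n \to \mu$ weakly-$*$, with $\mu$ supported on $\Omega_X$. The Følner property yields $G$-invariance of $\mu$: for any continuous $\phi$ on $\mathcal{C}(G)$ and any $h \in G$, the change of variables $g \mapsto h^{-1}g$ (using left-invariance of Haar, and implicitly unimodularity of $G$, which is the operative setting of the paper) rewrites $\int \phi(hY)\,d\mu_n(Y)$ as an average over $h^{-1}F_n$; the Følner condition $\mu_G(h^{-1}F_n \Delta F_n) = o(\mu_G(F_n))$ then shows this agrees with $\int \phi\, d\mu_n$ in the limit.

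For the periodization bound, I would test against indicators of neighbourhoods of the identity. Fix an open relatively compact $V \ni e$ and apply Lemma~\ref{Lemma: bound convolution and counting}, inequality~(\ref{Eq: upper bounded convolution and counting}), to the subset $X^{-1}$ with $A = V$ and $B = F_n$. Recognising $\sum_{x \in X^{-1}} \mathbf{1}_{Vx}(g) = |gX \cap V|$ gives
$$\mu_G(V)\,|X^{-1} \cap F_n| \;\leq\; \int_{VF_n} |gX \cap V|\, d\mu_G(g).$$
Dividing by $\mu_G(F_n)$, using $\mu_G(VF_n)/\mu_G(F_n)\to 1$ by Følner, and observing that the right-hand side is $\mu_G(VF_n)$ times an average that has the same weak-$*$ limit as $\mu_n(\mathcal{P}_X\mathbf{1}_V)$, I obtain
$$\alpha\, \mu_G(V) \;\leq\; \liminf_n \mu_n(\mathcal{P}_X\mathbf{1}_V).$$
Since $\mathcal{P}_X\mathbf{1}_V : Y \mapsto |Y \cap V|$ is upper semi-continuous and bounded on $\Omega_X$ (by uniform discreteness), the Portmanteau direction for USC functions gives $\limsup_n \mu_n(\mathcal{P}_X\mathbf{1}_V) \leq \mu(\mathcal{P}_X\mathbf{1}_V)$, hence $\mathcal{P}_X^*\mu(V) \geq \alpha\, \mu_G(V)$. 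Because $\mathcal{P}_X^*\mu$ is a $G$-left-invariant locally finite Radon measure on $G$, uniqueness of Haar measure forces $\mathcal{P}_X^*\mu = c\, \mu_G$ with $c \geq \alpha$, giving the desired inequality.

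It remains to arrange properness. A priori $\mu$ may charge the $G$-fixed point $\{\emptyset\} \in \mathcal{C}(G)$, but since $\mathcal{P}_X\phi(\emptyset) = 0$ for every $\phi$, the measure $\mu' := \mu - \mu(\{\emptyset\})\,\delta_\emptyset$ is $G$-invariant with the same periodization image as $\mu$. The case $\mu(\{\emptyset\}) = 1$ is excluded since it would force $\mathcal{P}_X^*\mu = 0$, contradicting the lower bound $\alpha > 0$. Hence $\mu'$ has positive total mass $1 - \mu(\{\emptyset\})$, and after renormalising by this factor I obtain a proper $G$-invariant probability measure whose periodization image is at least $\alpha\, \mu_G / (1 - \mu(\{\emptyset\})) \geq \alpha\, \mu_G$.

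The step I expect to be the main obstacle is transferring the raw counting estimate into a weak-$*$ statement: one has to match the integration domain $VF_n$ appearing from Lemma~\ref{Lemma: bound convolution and counting} with the empirical measure $\mu_n$ supported on the average over $F_n$, while simultaneously handling the fact that the natural test function $\mathcal{P}_X\mathbf{1}_V$ is only upper semi-continuous. Both issues are absorbed by the Følner condition (for the domain) and by Portmanteau for USC functions (for the test), but arranging them together in the correct direction of inequality is the one place where care is needed.
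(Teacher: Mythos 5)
Your strategy is essentially the same as the paper's: build $\mu$ as a weak-$*$ limit of the Cesàro averages of $\delta_{gX}$ along the Følner sequence, transfer the counting hypothesis into a lower bound on $\mathcal{P}_X^*\mu$ via Lemma~\ref{Lemma: bound convolution and counting}, and then remove the possible atom at $\emptyset$. The properness step (subtracting off $\mu(\{\emptyset\})\,\delta_\emptyset$ and renormalising) is a clean alternative to the paper's appeal to ergodic decomposition.

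There is, however, a genuine gap in the Portmanteau step. For $V$ \emph{open}, the map $Y \mapsto |Y\cap V|$ on $\Omega_X$ is \emph{lower} semi-continuous, not upper semi-continuous: if $Y_n \to Y$ in the Chabauty topology and $y \in Y\cap V$, then approximating points $y_n\in Y_n$ with $y_n\to y$ eventually lie in the open set $V$, so $\liminf |Y_n\cap V|\geq |Y\cap V|$; but points of $Y_n$ lying in $V$ can converge to boundary points of $V$ and be lost in the limit, so $\limsup |Y_n\cap V|$ can strictly exceed $|Y\cap V|$ (take $G=\mathbb{R}$, $V=(-1,1)$, $Y_n=\{0,1-1/n\}\to\{0,1\}$). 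The Portmanteau inequality for an LSC function goes the \emph{other} way, $\mu(f) \leq \liminf_n\mu_n(f)$, which is useless against the lower bound $\liminf_n\mu_n(\mathcal{P}_X\mathbf{1}_V)\geq\alpha\mu_G(V)$ you derived. (The remark in the paper following the definition of $\mathcal{P}_X$ asserts USC for open $V$; that assertion is an error, and the paper's actual proof does not rely on it.)

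The step is salvageable in two equivalent ways. One: take $V$ a \emph{compact} neighbourhood of $e$. For compact $V$ and $Y$ ranging over the uniformly discrete family $\Omega_X$, the map $Y\mapsto |Y\cap V|$ genuinely is USC (a cluster of $m$ points of $Y_n$ in the compact $V$ produces $m$ distinct limit points in $Y\cap V$, by uniform discreteness), so your USC-Portmanteau line goes through verbatim; the conclusion $\mathcal{P}_X^*\mu(V)\geq\alpha\mu_G(V)$ for a single compact neighbourhood $V$ of positive Haar measure, together with the fact that $\mathcal{P}_X^*\mu$ is a left-invariant Radon measure, gives $\mathcal{P}_X^*\mu = c\,\mu_G$ with $c\geq\alpha$, exactly as you wrote. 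Two (the paper's route): keep $V$ compact, test against continuous $\phi\in C_c^0(G)$ with $\phi\geq\mathbf{1}_V$ — so $\mathcal{P}_X\phi$ is \emph{continuous} on $\Omega_X$ and weak-$*$ convergence applies directly — and then use outer regularity of the Radon measure $\mathcal{P}_X^*\mu$ to pass to the infimum over such $\phi$ and recover $\mathcal{P}_X^*\mu(V)\geq\alpha\mu_G(V)$.
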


\begin{proof}
Take $V$ a compact neighbourhood of the identity such that $X^{-1}X \cap V^{-1}V =\{e\}$. Then we can reformulate (\ref{Eq: upper bounded convolution and counting}) of Lemma \ref{Lemma: bound convolution and counting} as follows: 
\begin{equation}
\mu_G(V) |X^{-1} \cap F_n| \leq \int_{VF_n}\mathcal{P}_X \mathbf{1}_{V}(gX)d\mu_G(g). \label{Eq: Counting folner sets and periodization maps}
\end{equation}
Consider $\phi \in C^0_c(G)$ with non-negative values and $\phi \geq \mathbf{1}_{V}$. Let $\mu_n$ denote the measure with density $\mu_G(VF_n)^{-1}\mathbf{1}_{VF_n}$ against the Haar measure and write $$\liminf \frac{|X \cap F_n|}{\mu_G(F_n)} = \alpha.$$ Since $F_n$ is a F\o lner sequence, $\mu_G(VF_n) \sim \mu_G(F_n)$ and $(VF_n)_{\geq 0}$ is a F\o lner sequence. Then (\ref{Eq: Counting folner sets and periodization maps}) implies that 

\begin{align*}
\liminf\int_{\Omega_X} \mathcal{P}_X\phi(Y)d(\mu_n * \delta_X)(Y) &\geq  \liminf\int_{VF_n} \frac{\mathcal{P}_X \mathbf{1}_{V}(g)}{\mu_G(VF_n)}d\mu_G(g)\\
&\geq  \alpha\mu_G(V) > 0
\end{align*}
where $\delta_X$ denotes the dirac mass at $X$. Therefore, any weak-* limit $\nu$ of $\mu_n * \delta_X$ satisfies 

\begin{equation}
\int_{\Omega_X} \mathcal{P}_X\phi(Y)d\nu(Y) \geq \alpha\mu_G(V). \label{Eq: Inequality on the limit measure}
\end{equation}
 Since $\mathcal{P}_X\phi(Y)$ has compact support contained in $\Omega_X \setminus \{\emptyset\}$, $\nu$ is not supported on $\{\emptyset\}$. But $\nu$ is a $G$-invariant Borel probability measure since $(VF_n)_{n \geq 0}$ is a F\o lner sequence. Note moreover that (\ref{Eq: Inequality on the limit measure}) is valid for any such $\phi$, so 
 $$\mathcal{P}_X^*\nu(V) = \int_{\Omega_X} \mathcal{P}_X\mathbf{1}_V(Y)d\nu(Y) \geq \alpha \mu_G(V).$$
Thus, a proper $G$-invariant ergodic Borel probability measure $\nu'$ such that $\nu'(U^V) \geq \alpha\mu_G(V)$ must appear in the ergodic decomposition of $\nu$. The result now follows from the fact that $\mathcal{P}_X^*\nu'$ is a Haar measure.
\end{proof}

\subsection{The small doubling criterion}
Our proof will rely on the following criterion:

\begin{proposition}\label{Proposition: Technical version Lagarias}
Let $X$ be a subset of a unimodular amenable second countable locally compact group $G$. If:
\begin{enumerate}
\item there is a neighbourhood of the identity $V \subset G$ with $(X^{-1}X)^2 \cap W^{-1}W=\{e\}$; 
\item we have $$\limsup \frac{|X \cap F_n|}{\mu_G(F_n)}=\alpha >0 $$ for some bi-Følner sequence $F_n$;
\end{enumerate}
then there is a subset $S \subset XX^{-1}$ such that $S^n$ is uniformly discrete for all $n \geq 0$ and there is a finite subset $F$ with $X \subset SF$. Moreover, $S^2$ is an $O(\alpha^{-12}\mu_G(W)^{-12})$-approximate subgroup and $F$ has size at most $O(\alpha^{-3}\mu_G(W)^3)$.
\end{proposition}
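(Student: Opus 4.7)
My plan is to combine the ergodic input of Section \ref{Section: Counting points using periodization maps and consequences} with a family version of Tao's non-commutative Pl\"unnecke--Ruzsa theorem. First, I would feed the bi-F\o lner sequence into Proposition \ref{Proposition: Statistical data and measures on the hull}: passing to a subsequence along which the $\limsup$ $\alpha$ is realised yields a proper $G$-invariant Borel probability measure $\nu$ on $\Omega_X$ with $\mathcal{P}_X^*\nu \geq \alpha\mu_G$. Writing $Y_n := X \cap F_n$, one has $|Y_n| \gtrsim \alpha \mu_G(F_n)$ along the subsequence.

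The next step is to convert this into an effective small doubling estimate for the $Y_n$. Hypothesis (1) guarantees that the left translates $Wx'$ are pairwise disjoint as $x'$ ranges over $X^{-1}X$, whence Lemma \ref{Lemma: bound convolution and counting} gives $|X^{-1}X \cap A| \leq \mu_G(AW)/\mu_G(W)$ for every compact $A$. Applied with $A = KF_n$, and combined with the two-sided invariance $\mu_G(KF_nK) \sim \mu_G(F_n)$ coming from the bi-F\o lner property, this yields a \emph{restricted} doubling bound
\[
|Y_n^{-1}Y_n \cap KF_n| \leq C_K \cdot \alpha^{-1}\mu_G(W)^{-1} \cdot |Y_n|
\]
valid for each fixed compact $K$ and large $n$. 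Unlike Konieczny's abelian argument \cite{konieczny2021characterisation}, the individual $Y_n$ may fail to have honest small doubling in a non-commutative amenable group; it is the \emph{family} $(Y_n)_n$ that exhibits small doubling statistically, as anticipated in the proof strategy discussion.

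I would then apply a form of the non-commutative Pl\"unnecke--Ruzsa machinery of \cite{MR2501249} tailored to this family setting. For each large $n$, the restricted doubling---after a Balog--Szemer\'edi--Gowers-type refinement to pass from statistical to honest small doubling on a large subset of $Y_n$---feeds into Tao's theorem to produce a finite $O(K^{12})$-approximate subgroup $H_n \subset XX^{-1}$, where $K = O(\alpha^{-1}\mu_G(W)^{-1})$, together with a covering $Y_n \subset F_n'H_n$ of size $|F_n'| = O(K^3)$. Lemma \ref{Lemma: Ruzsa's covering lemma for families of sets} ensures the $F_n'$ can be chosen with cardinality uniformly bounded in $n$. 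Finally, a Chabauty compactness argument extracts a limit $S \subset XX^{-1}$ of the $H_n$; the $O(K^{12})$-approximate subgroup relation passes to $S^2$ by closedness, and uniform discreteness of every $S^n$ follows from $S \subset XX^{-1}$, hypothesis (1), and the fact that $S^{2k}$ lies in finitely many translates of $S^2$ once $S^2$ is an approximate subgroup.

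The main obstacle is the passage from the Haar-theoretic information on $\Omega_X$ to a genuine combinatorial small doubling estimate on the finite $Y_n$, since in a general non-abelian amenable group the product $F_n\cdot F_n$ need not be comparable to $F_n$. One must therefore work only with the restriction of $Y_n^{-1}Y_n$ to regions of comparable measure and reassemble the local pieces via the family-of-sets covering lemma; this is where two-sidedness of the bi-F\o lner property, rather than the usual one-sided F\o lner condition, becomes essential.
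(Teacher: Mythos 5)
Your plan diverges from the paper's at the crucial combinatorial step, and the divergence is a genuine gap. You derive a \emph{restricted} doubling bound of the form $|Y_n^{-1}Y_n \cap KF_n| \lesssim \alpha^{-1}\mu_G(W)^{-1}|Y_n|$ and then propose a Balog--Szemer\'edi--Gowers-type refinement to obtain honest small doubling on a large piece of $Y_n$, followed by Tao's Pl\"unnecke--Ruzsa theorem to extract a per-$n$ approximate subgroup $H_n$. But a restricted doubling bound does not give the additive energy lower bound that BSG requires: it says nothing about how many quadruples $(a,b,c,d)\in Y_n^4$ satisfy $a^{-1}b=c^{-1}d$, and $Y_n^{-1}Y_n$ as a whole sits inside $F_n^{-1}F_n$, which in a general amenable group may be drastically larger than $F_n$. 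This is precisely the obstruction the paper flags in Remark \ref{Remark: Boxes and nilpotency}: outside the nilpotent (polynomial growth) case, the slices $X\cap F_n$ need not have small doubling in any per-$n$ sense, even after refinement. So the objects $H_n$ you want to feed into a Chabauty limit need not exist --- and you in fact acknowledge this (``it is the family $(Y_n)_n$ that exhibits small doubling statistically''), yet the very next step of your proposal is a single-scale extraction, which contradicts that observation.

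The paper's proof never extracts single-scale approximate subgroups. After passing to a subsequence satisfying the Shulman condition $\mu_G(F_n^{-1}F_m)\le\beta_0\mu_G(F_m)$ for $m>n$, it proves the \emph{cross-scale} bound $|A_n^{-1}A_m|\le\beta\,\mu_G(F_m)$ for $A_k=X\cap F_kW$, a doubling estimate between distinct scales $n<m$ measured against the larger scale $m$. It then builds a left-invariant finitely additive measure $M$ on $W$-separated sets via an ultrafilter limit along the bi-F\o lner sequence, applies a Massicot--Wagner argument to $M$ to produce $S=\{g: M(gX\cap X)\ge \tfrac{2\alpha}{3\beta}M(X)\}$ and the covering $X^{-1}\subset FS$ with $|F|=O(\beta^3\alpha^{-3})$, and finally uses Lemma \ref{Lemma: Plunnecke for family of sets} applied to a chain $A_{k_0},\dots,A_{k_{n+1}}$ at \emph{increasing scales} to bound $S^n\cap W$ and deduce uniform discreteness of every $S^n$. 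The measure on $\Omega_X$ from Proposition \ref{Proposition: Statistical data and measures on the hull}, which you invoke at the start but then do not use, enters only at the end, via Lemma \ref{Lemma: Hull and commensurability}, to upgrade $S^2$ from uniformly discrete with small powers to an actual approximate subgroup. Your proposal is missing both the Shulman/cross-scale idea and the ultraproduct--Massicot--Wagner mechanism that replace per-$n$ BSG--Pl\"unnecke extraction.
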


Our strategy will be to rephrase conditions (1) and (2) in additive-combinatorial terms. However, contrary to the case treated by Konieczny in \cite{konieczny2021characterisation}, we will not be able to write $X$ as an inductive limit of finite approximate subgroups. Rather, we will show that the subsets $(X \cap F_n)_{n \geq 0}$ satisfy a small doubling condition \emph{as a family}. We will build upon an idea of Tao \cite{MR2501249} used in his proof of a non-commutative generalisation of Pl\"{u}nnecke's lemma:

\begin{lemma}\label{Lemma: Plunnecke for family of sets}
Let $A_0, \ldots, A_{k+1}$ be finite subsets of a group $G$. Suppose that there is $K \geq 0$ such that for all $l \in \{0, \ldots, k\}$ we have
$$ |A_l^{-1}A_{l+1}| \leq K|A_{l+1}|.$$ 
For every  $l \in \{1,\ldots,k\}$ set
$$S_l := \{g \in G : |A_l \cap gA_l| \geq (2K)^{-1}|A_l|\}.$$ Then 
$$|A_0^{-1}S_1\cdots S_{k}A_{k+1}| \leq 2^kK^{2k+1}|A_{k+1}|.$$
\end{lemma}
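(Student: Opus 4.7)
The plan is to carry out a double-counting argument that realises each element of $T := A_0^{-1} S_1 \cdots S_k A_{k+1}$ in many ways as a product $g = x_0 y_1 \cdots y_k$ with $x_0 \in A_0^{-1} A_1$ and $y_i \in A_i^{-1} A_{i+1}$. The cardinality of the ambient set of such tuples is already controlled by the hypothesis, namely $|A_0^{-1}A_1| \prod_{i=1}^k |A_i^{-1}A_{i+1}| \leq K^{k+1} \prod_{i=1}^{k+1} |A_i|$, so if each $g \in T$ admits many factorisations of this shape, the size of $T$ is forced to be small.

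To make this precise, I would fix $g \in T$ and pick one witnessing expression $g = b_0 s_1 \cdots s_k b_{k+1}$ with $b_0 \in A_0^{-1}$, $s_i \in S_i$, $b_{k+1} \in A_{k+1}$. The key observation is that the definition of $S_i$ guarantees $|A_i \cap s_i A_i| \geq |A_i|/(2K)$: to each $a_i$ in this intersection one associates $a_i' := s_i^{-1} a_i \in A_i$, so that $s_i = a_i (a_i')^{-1}$. Substituting these expressions and regrouping produces $g = x_0 y_1 \cdots y_k$, where $x_0 := b_0 a_1 \in A_0^{-1} A_1$, $y_i := (a_i')^{-1} a_{i+1} \in A_i^{-1} A_{i+1}$ for $1 \leq i \leq k-1$, and $y_k := (a_k')^{-1} b_{k+1} \in A_k^{-1} A_{k+1}$.

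The remaining, and main, technical point is injectivity: distinct tuples $(a_1, \ldots, a_k) \in \prod_i (A_i \cap s_i A_i)$ should produce distinct tuples $(x_0, y_1, \ldots, y_k)$. I expect to verify this by a short induction in $i$: the coordinate $x_0 = b_0 a_1$ determines $a_1$ because $b_0$ is fixed, and the identity $y_i = a_i^{-1} s_i a_{i+1}$ then determines $a_{i+1}$ once $a_i$ is known. With injectivity in hand, each $g \in T$ contributes at least $(2K)^{-k} \prod_{i=1}^k |A_i|$ distinct tuples to $A_0^{-1}A_1 \times \prod_{i=1}^k A_i^{-1}A_{i+1}$.

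Dividing the global upper bound by the pointwise lower bound yields exactly $|T| \leq (2K)^k K^{k+1} |A_{k+1}| = 2^k K^{2k+1} |A_{k+1}|$. The strategy is essentially Tao's non-commutative Plünnecke trick transplanted to a family of sets, with each $S_i$ playing the role of an approximate stabiliser of $A_i$ that can be absorbed into $A_i A_i^{-1}$ at the cost of a factor $2K$ per coordinate. I anticipate no serious obstacle beyond writing out the injectivity verification carefully; the rest is a direct application of the standing small-doubling hypotheses and elementary double counting.
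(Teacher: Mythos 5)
Your proof is correct and follows essentially the same route as the paper: both consider the multiplication map $m\colon A_0^{-1}A_1\times A_1^{-1}A_2\times\cdots\times A_k^{-1}A_{k+1}\to G$, bound its domain by $K^{k+1}\prod_{i\geq 1}|A_i|$ using the hypothesis, and for a fixed witness $g=b_0 s_1\cdots s_k b_{k+1}$ produce at least $(2K)^{-k}\prod_{i=1}^k|A_i|$ distinct preimages by writing $s_i=a_i(a_i')^{-1}$ with $a_i\in A_i\cap s_iA_i$ and telescoping. Your write-up is slightly more careful in that you spell out the injectivity of $(a_1,\dots,a_k)\mapsto(x_0,y_1,\dots,y_k)$, a step the paper asserts without comment (and whose displayed decomposition has a minor indexing typo), but the underlying argument is identical.
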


\begin{proof}
Let $m: A_0^{-1}A_1 \times \cdots \times A_{k}^{-1}A_{k+1} \rightarrow G$ denote the multiplication map. Take an element $a \in A_0^{-1}S_1\cdots S_{k}A_{k+1}$ and choose $\alpha_0 \in A_0, \alpha_{k+1} \in A_{k+1}, s_1\in S_1, \ldots, s_k \in S_k$ such that $a=\alpha_0s_1\cdots s_k\alpha_{k+1}$. Now for every $l \in \{1, \ldots, k\}$ take $\alpha_l \in A_l \cap s_lA_l$.  We have $\alpha_l^{-1}s_l\alpha_{l+1} \in A_l^{-1}A_{l+1}$. Moreover, 
$$a = m(\alpha_0^{-1}s_1\alpha_1,\ldots,\alpha_k^{-1}s_k\alpha_{k+1}).$$ So $a$ belongs to the range of the map $m$ and $m^{-1}(a)$ has size at least $|A_1 \cap s_1A_1| \cdots |A_k \cap s_kA_k|$. Therefore, 
$$|m^{-1}(a)| \geq (2K)^{-k}|A_1|\cdots |A_k|.$$ But $$\left|A_0^{-1}A_1 \times \cdots \times A_{k}^{-1}A_{k+1}\right| \leq K^{k+1}|A_1| \cdots |A_{k+1}|.$$
So there are at most 
$2^kK^{2k+1}|A_{k+1}|$ such elements $a$.
\end{proof}

\begin{proof}[Proof of Proposition \ref{Proposition: Technical version Lagarias}.]
We will rephrase (2) as follows. Choose $\alpha > 0$ such that $\limsup |X \cap F_n|/\mu_G(F_n) > \alpha$. Upon considering a subsequence of $(F_n)_{n \geq 0}$ we may assume that 
$$|X \cap F_nW| > \alpha \mu_G(F_nW)$$
for all $n \geq 0$.

We will be able to rephrase (1) in a similar fashion. Fix $\beta_0 > 1$. First of all, upon considering a further subsequence, we may assume that the sequence $(F_n)_{n \geq 0}$ satisfies the Shulman assumption. In other words, 
$$ \forall m > n \geq 0, \mu_G(F_n^{-1}F_m) \leq \beta_0\mu_G(F_m).$$
Note that $\beta_0 > 0$ may be chosen as close to $1$ as one wishes. 

Let $W$ be as in (1). Since $(F_n)_{n \geq 0}$ is bi-F\o lner, we may furthermore assume that
$$ \forall m > n \geq 0,\ \mu_G(W^{-1}F_n^{-1}F_mW^2) \leq \beta_0\mu_G(F_m)$$
for the same $\beta_0$ as above. 

 We have 
\begin{align*}
 \left|X^{-1}X \cap W^{-1}F_n^{-1}F_mW\right|& = \mu_G\left( \left(X^{-1}X \cap W^{-1}F_n^{-1}F_mW\right)W \right) \\
 & \leq \mu_G(W^{-1}F_n^{-1}F_mW^2) \\
 & \leq \beta_0\mu_G(F_m). 
 \end{align*}

Define now the subsets $\tilde{F}_n:=F_nW$ and $A_n:= X \cap \tilde{F}_n$ for all $n \geq 0$. The above discussion implies: 
\begin{align}
&\forall n \geq 0,\ \  \  \  \  \  \  \  |A_n| \geq \alpha \mu_G(F_n) \label{Eq: Lower bound folner sets} \\
&\forall m > n \geq 0, \  |A_n^{-1}A_m| \leq \beta \mu_G(F_m)\label{Eq: Upper bound folner sets}
\end{align}
where $\beta = \frac{\beta_0}{\mu_G(W)}$.

Choose a non principal ultrafilter $U$ on $\mathbb{N}$. Let $P_{W}(G)$ be the set of subsets $Y$ of $G$ such that the multiplication map $Y \times W \rightarrow G$ is one-to-one. For $Y \in P_{W}(G)$ define
$$M(Y) = \lim_{n \rightarrow U} \frac{\mu_G(YW \cap \tilde{F}_nW)}{\mu_G(\tilde{F}_n)}.$$
Since $(F_n)_{n \geq 0}$ is F\o lner, it easily seen that $M$ is a left-invariant finitely additive measure defined on $P_{W}(G)$. According to (\ref{Eq: Lower bound folner sets}) we have $M(X) \geq \alpha\mu_G(W)$. By (\ref{Eq: Upper bound folner sets}) we have $M(X^{-1}X)\leq \beta$. Define
$$ S:=\left\{ g \in G : M(gX \cap X) \geq \frac{2\alpha}{3\beta}M(X)\right\}. $$
By an argument due to Massicot--Wagner (\cite[Proof of Thm 12]{MR3345797}) there is a finite subset $F$ of size $O(\beta^3\alpha^{-3})$ such that $X^{-1}\subset FS$. 

We will now show that $S^n$ is uniformly discrete for all $n \geq 0$. To do so, we will prove that $S^n \cap W$ is finite for all $n \geq 0$. Notice first that $S\subset \bigcup_{n \geq 0} S_n$ where 
$$S_n:=\{g \in G : |A_n \cap gA_n| \geq \frac{\alpha}{2\beta}|A_n|\}$$
for all $n \geq 0$. More precisely, each element of $S$ is contained in infinitely many $S_n$'s. Indeed, for every $Y \in P_W(G)$ we have 
 $$ |Y \cap \tilde{F}_n| \leq\mu_G(YW \cap F_nW^2).$$
 And 
 \begin{align*}
 \mu_G(YW \cap F_nW^2) & \leq |Y \cap \tilde{F}_n| + \mu_G(YW \cap (F_nW^2 \setminus F_n)) \\
 & = |Y \cap \tilde{F}_n| + o(\mu_G(F_n)).
 \end{align*}
 
 But, if $g \in S$, for some $A \in U$ and all $n \in A$ we have 
 $$\mu_G\left( (X \cap gX)W \cap \tilde{F}_nW \right) \geq \frac{3\alpha}{5\beta} \mu_G\left( XW \cap \tilde{F}_nW \right)$$
 which yields
 $$ |X \cap gX \cap \tilde{F}_n| + o(\mu_G(F_n)) \geq \frac{3\alpha}{5\beta} |X \cap \tilde{F}_n|.$$
 Since $|X \cap \tilde{F}_n| \geq \alpha \mu_G(\tilde{F}_n)$, if $n$ is chosen sufficiently large with respect to $U$, then 
 $$ |X \cap gX \cap \tilde{F}_n| \geq \frac{\alpha}{2\beta} |X \cap \tilde{F}_n|.$$
 So our claim is proved up to considering a subsequence.

  Choose any finite subset $F' \subset S^n \cap W$ such that the subsets $fX$ are pairwise disjoint when $f$ runs through $F'$. Then there are $k_n >\ldots > k_0 \geq 0$ such that $F' \subset S_{k_1}\cdots S_{k_n}$. Choose an integer $k_{n+1}$ such that $k_n < k_{n+1}$. We thus have by Lemma \ref{Lemma: Plunnecke for family of sets} (note that we had to specify $k_0$ to invoke Lemma \ref{Lemma: Plunnecke for family of sets}, although it is not needed from now on):
$$|F'||A_{k_{n+1}}| = |F'A_{k_{n+1}}| \leq |S_{k_1}\cdots S_{k_{n}}A_{k_{n+1}}| \leq 2^n\frac{\beta^{2n+1}}{\alpha^{2n+1}}|A_{k_{n+1}}|.$$
Therefore, $|F'|\leq 2^n\frac{\beta^{2n+1}}{\alpha^{2n+1}}$. So take $F'$ as above of maximal size. By Ruzsa's covering lemma we have $S^n\cap W \subset F'XX^{-1}$. Since $F' \subset W$ we finally have 
$$S^n\cap W \subset F'\left(XX^{-1} \cap W^2\right)=F'.$$

Let us finally prove that $S^2$ is an approximate lattice. Notice first that 
\begin{align*}
\limsup \frac{|S \cap F_nF|}{\mu_G(F_n)} & \geq \limsup \frac{|SF^{-1} \cap F_n|}{|F|\mu_G(F_n)} \\
                                              & \geq \limsup \frac{|X \cap F_n|}{|F|\mu_G(F_n)} \\
                                              & \geq \limsup \frac{|X \cap F_n|}{|F|\mu_G(F_n)} > 0.
\end{align*}
Since $(F_n)_{n \geq 0}$ is bi-F\o lner, $(F_nF)_{ n \geq 0}$ is a F\o lner sequence and $\mu_G(F_nF) \sim \mu_G(F_n)$. Hence, according to Proposition \ref{Proposition: Statistical data and measures on the hull} there is a proper $G$-invariant Borel probability measure $\nu$ on $\Omega_S$. By Lemma \ref{Lemma: Hull and commensurability}, that $S^5$ is uniformly discrete implies that $S^2$ is an approximate subgroup and, hence, an approximate lattice. The bounds in the statement of Proposition \ref{Proposition: Technical version Lagarias} are a consequence of the bounds on the size of $F'$ and the ones found in Lemma \ref{Lemma: Hull and commensurability}.
\end{proof}

\begin{remark}\label{Remark: Boxes and nilpotency}
In Konieczny's \cite{konieczny2021characterisation}, a more direct approach when $X \subset \mathbb{R}^n$ relies on the fact that the intersections $X \cap [-R;R]^n$ for $R \rightarrow \infty$ are $l$-approximate subgroups for some fixed $l \geq 0$ independent of $R$. Unfortunately, this fact is specific to nilpotent ambient groups $G$, as can be derived from the polynomial growth theorem for approximate subgroups (see \cite[Appendix]{cordes2020foundations}). 
\end{remark}

\subsection{Double counting and ergodic theorems}

\begin{proposition}\label{Proposition: Lagarias with fundamental domain}
Let $X$ be a subset of a unimodular amenable locally compact second countable group $G$. Suppose that $XX^{-1}$ is uniformly discrete and that there is $\mathcal{F}$ of finite Haar measure such that $X\mathcal{F}=G$. Then for all F\o lner sequences $(F_n)_{n \geq 0}$, 
$$ \liminf \frac{|X \cap F_n |}{\mu_G(F_n)} \geq \frac{1}{\mu_G(\mathcal{F})}.$$
\end{proposition}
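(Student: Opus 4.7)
My plan is a double-counting argument on $F_n$ exploiting the covering $X\mathcal{F}=G$, with $\mathcal{F}$ replaced by an inner compact approximation so that the Følner property can be brought to bear. The starting observation is the pointwise inequality $\sum_{x\in X}\mathbf{1}_{x\mathcal{F}}(g)\geq 1$, which is just a reformulation of $X\mathcal{F}=G$.

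Concretely, I would fix $\epsilon>0$ and, by inner regularity of $\mu_G$, choose a compact $K\subset \mathcal{F}$ with $\mu_G(\mathcal{F}\setminus K)<\epsilon$; after replacing $\mathcal{F}$ by $\mathcal{F}k_0^{-1}$ for some $k_0\in K$ I may assume $e\in K$. Integrating the pointwise bound over $F_n$ and splitting $\mathcal{F}=K\sqcup(\mathcal{F}\setminus K)$ gives
\[
\mu_G(F_n)\;\leq\;\sum_{x\in X}\mu_G(F_n\cap xK)\;+\;\sum_{x\in X}\mu_G(F_n\cap x(\mathcal{F}\setminus K)).
\]
The first sum is at most $|X\cap F_nK^{-1}|\,\mu_G(\mathcal{F})$, because $xK\cap F_n\neq\emptyset$ forces $x\in F_nK^{-1}$ and $\mu_G(K)\leq\mu_G(\mathcal{F})$. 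For the error term I would pick a symmetric compact neighbourhood $V$ of $e$ with $V^{-1}V\cap XX^{-1}=\{e\}$; the disjoint left translates $Vx$, $x\in X$, yield the handy bound $|X\cap A|\leq\mu_G(VA)/\mu_G(V)$ for every Borel $A$, and a Fubini calculation together with the unimodular identity $\mu_G(VF_nh^{-1})=\mu_G(VF_n)$ bounds the error term by $\epsilon\,\mu_G(VF_n)/\mu_G(V)$. Dividing by $\mu_G(F_n)$ and invoking the Følner property $\mu_G(VF_n)/\mu_G(F_n)\to 1$ then produces
\[
\liminf_{n\to\infty}\frac{|X\cap F_nK^{-1}|}{\mu_G(F_n)}\;\geq\;\frac{1-\epsilon/\mu_G(V)}{\mu_G(\mathcal{F})}.
\]

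The step I expect to be the main obstacle is upgrading $|X\cap F_nK^{-1}|$ to $|X\cap F_n|$: the right-hand enlargement by $K^{-1}$ is not controlled by a one-sided Følner property alone, and there is no reason for $|X\cap(F_nK^{-1}\setminus F_n)|$ to be $o(\mu_G(F_n))$ purely from uniform discreteness. My plan to circumvent this is to rerun the whole estimate with $F_n$ replaced by the inner approximation $F_n':=\{g\in F_n:gK^{-1}\subset F_n\}$: by construction $F_n'K^{-1}\subset F_n$, so $|X\cap F_n'K^{-1}|\leq |X\cap F_n|$ for free. After passing to a bi-Følner refinement (available in any unimodular amenable second countable locally compact group by Ornstein--Weiss) one has $\mu_G(F_n')/\mu_G(F_n)\to 1$ and the left-Følner property is inherited by $(F_n')$, so the displayed inequality transfers with $|X\cap F_n|/\mu_G(F_n)$ on the left. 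Letting $\epsilon\to 0$ then delivers the desired bound $\liminf|X\cap F_n|/\mu_G(F_n)\geq 1/\mu_G(\mathcal{F})$.
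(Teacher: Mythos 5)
Your proposal is essentially the paper's argument, reorganised: the paper isolates the Fubini/double-counting step into a separate lemma (Lemma~\ref{Lemma: Double counting and finite co-volume}) and then builds a bespoke bi-F\o lner sequence $F_n'=K_n'F_nV$ to absorb the compact correction, whereas you inline the double-counting directly over $F_n$ and absorb the $K^{-1}$-enlargement by shrinking to $F_n'=\{g\in F_n:gK^{-1}\subset F_n\}$ after passing to a bi-F\o lner sequence. Your Fubini calculation for the error term is correct: after the change of variable it becomes $\int_{\mathcal{F}\setminus K}|X\cap F_nh^{-1}|\,d\mu_G(h)$, and the disjointness of the translates $Vx$ together with the unimodular identity $\mu_G(VF_nh^{-1})=\mu_G(VF_n)$ gives exactly the claimed bound. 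You also correctly identified the genuine obstacle: the statement as written promises the lower bound along \emph{every} F\o lner sequence, but the right-enlargement by $K^{-1}$ is only tamed by a two-sided F\o lner condition, so your argument really proves the inequality along bi-F\o lner sequences (in fact along every bi-F\o lner sequence). This is not a defect relative to the paper --- the paper's own proof constructs a single bi-F\o lner sequence $(F_n')$ and verifies the bound only along it, so it too establishes an existence statement rather than the universal one claimed; and in either case what is used downstream (Proposition~\ref{Proposition: Technical version Lagarias} and Theorem~\ref{Theorem: Lagarias-type theorem in amenable groups, first version}) only requires a single bi-F\o lner sequence along which the density is positive. One small remark: the normalisation ``$e\in K$'' obtained by replacing $\mathcal{F}$ with $\mathcal{F}k_0^{-1}$ is harmless (it preserves $X\mathcal{F}=G$ and $\mu_G(\mathcal{F})$ by unimodularity) but is never actually used in the rest of your argument and can be dropped.
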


Proposition \ref{Proposition: Lagarias theorem with measures on the Hull} will be an easy consequence of a Fubini-type argument that we write down below. 

\begin{lemma}\label{Lemma: Double counting and finite co-volume}
Let $X$ be a subset such that $\mathcal{F}X= G$ for $\mathcal{F}$ of finite co-volume and $X^{-1}X\cap V^{-1}V =\{e\}$ for a neighbourhood $V$ of the identity . Choose $\epsilon >0$. Then there is $K \subset \mathcal{F}$ compact such that for all $B \subset G$ Borel we have 
$$\left| kX \cap BV \right| \geq \frac{(1-\epsilon)\mu_G(B)}{\mu_G(K)}$$
for some $k \in K$.
\end{lemma}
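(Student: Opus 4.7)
The plan is to reduce the pointwise claim to an averaging estimate on $K$ and then unpack $\mathcal{F}X=G$ via Fubini. By pigeonhole, it suffices to find a compact $K \subset \mathcal{F}$ such that
$$\int_K |kX \cap BV| \, d\mu_G(k) \geq (1-\epsilon)\mu_G(B)$$
for every Borel $B \subset G$; the claimed supremum bound then follows upon dividing by $\mu_G(K)$. As a preliminary, I would shrink $V$ to a symmetric sub-neighbourhood so that the hypothesis $X^{-1}X \cap V^{-1}V = \{e\}$ forces the translates $\{xV\}_{x \in X}$ and $\{Vx\}_{x \in X}$ to be pairwise disjoint.

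The core identity comes from $\mathcal{F}X=G$: Fubini together with the fact that each $g \in A$ has at least one representation $g=fx$ with $f \in \mathcal{F}, x \in X$ yields
$$\int_{\mathcal{F}} |fX \cap A| \, d\mu_G(f) \geq \mu_G(A)$$
for every Borel $A \subset G$. Applying this with $A=BV$ and using inner regularity of $\mu_G$ on $\mathcal{F}$ to pick a compact $K \subset \mathcal{F}$ with $\mu_G(\mathcal{F}\setminus K) \leq \eta$ for a small parameter $\eta>0$ to be fixed later, I would split the integral over $\mathcal{F}$ as $\int_K + \int_{\mathcal{F}\setminus K}$, reducing the problem to bounding the ``loss'' $\int_{\mathcal{F}\setminus K}|fX \cap BV|\,d\mu_G(f)$ from above. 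The disjointness of the $xV$'s provides the uniform pointwise estimate $|fX \cap BV| \leq \mu_G(BV^2)/\mu_G(V)$, and so the loss is at most $\eta\cdot\mu_G(BV^2)/\mu_G(V)$.

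The final step is a doubling estimate $\mu_G(BV^2) \leq C\mu_G(BV)$ uniformly in $B$, with $C=C(V)$ obtained from a Vitali-type covering of $BV$ by disjoint left translates of $V$. Choosing $\eta = \epsilon \mu_G(V)/C$ then bounds the loss by $\epsilon \mu_G(BV)$, and since $\mu_G(BV) \geq \mu_G(B)$ this delivers the desired averaging inequality. The hard part I anticipate is securing this doubling estimate uniformly across all Borel $B$, since very thin or irregular $B$ can inflate the ratio $\mu_G(BV^2)/\mu_G(BV)$; if uniformity fails I would split into two regimes by additionally arranging that $K$ is chosen so that $KXV$ is conull in $G$. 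This forces $|kX \cap BV| \geq 1$ for some $k \in K$ as soon as $\mu_G(B)>0$, disposing of the small-$B$ regime where $(1-\epsilon)\mu_G(B)/\mu_G(K) \leq 1$, while the averaging argument above covers the complementary large-$B$ regime, where $BV$ is fat enough that the doubling bound is automatic.
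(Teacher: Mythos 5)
Your proposal is essentially correct, but it takes a genuinely different route from the paper, and the two are worth contrasting. The paper sets up the Fubini argument in a parametrised form: using bi-invariance, one has $\int_{\mathcal F}\sum_{x\in X}\mathbf 1_V(hgx)\,d\mu_G(g)\geq\mu_G(V)$ \emph{for every} $h\in G$, and the discreteness hypothesis $X^{-1}X\cap V^{-1}V=\{e\}$ gives the pointwise bound $\sum_{x\in X}\mathbf 1_V(hgx)=|gX\cap h^{-1}V|\leq 1$. Because that pointwise bound is independent of $B$, the compact $K$ can be chosen once and for all with $\mu_G(\mathcal F\setminus K)\leq\epsilon\mu_G(V)$, giving $\int_K\sum_x\mathbf 1_V(hgx)\,d\mu_G(g)\geq(1-\epsilon)\mu_G(V)$ for every $h$; only at the very end does one integrate over $h\in B^{-1}$, swap the integrals, and use the elementary Lemma~\ref{Lemma: bound convolution and counting} to convert the inner integral into $\mu_G(V)\,|gX\cap BV|$. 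No doubling estimate is ever needed. Your version integrates $|fX\cap BV|$ directly, so the only available pointwise bound is $|fX\cap BV|\leq\mu_G(BV^2)/\mu_G(V)$, and you must then control $\mu_G(BV^2)$ by $\mu_G(BV)$ uniformly in $B$. You are right to flag this as the crux, but in fact it is fine: a maximal $V$-separated family $\{b_i\}\subset B$ (not in $BV$ --- covering $BV$ by translates of $V$ leads to a circular bound) has $\bigsqcup b_iV\subset BV$ and $B\subset\bigcup b_iV^2$, so $\mu_G(BV^2)\leq\bigl(\mu_G(V^4)/\mu_G(V)\bigr)\mu_G(BV)$. Taking $\eta=\epsilon\mu_G(V)^2/\mu_G(V^4)$ then closes the argument, at the cost of a larger $K$ than the paper needs.

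One genuine gap: the fallback you sketch for the ``small-$B$ regime'' does not work as stated. Choosing $K\subset\mathcal F$ compact with $KXV$ conull is not in general possible --- $\mathcal FX=G$ does not pass to $KX$ because the translates $\mathcal Fx$ overlap, and $(\mathcal F\setminus K)X$ can have infinite, indeed full, measure even when $\mu_G(\mathcal F\setminus K)$ is tiny (the union is over the infinite set $X$). Fortunately that fallback is unnecessary, since the doubling bound above is unconditional. Two further small points: you must replace $V$ by a symmetric sub-neighbourhood (you say this) so that $X^{-1}X\cap V^{-1}V=\{e\}$ yields disjointness of the translates $xV$, $x\in fX$, and since this only shrinks $BV$ the conclusion for the original $V$ still follows.
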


\begin{proof}
Since $\mu_G$ is bi-invariant, for all $h \in G$,
\begin{align*}\int_{\mathcal{F}}\sum_{x \in X} \mathbf{1}_V(hgx) d\mu_G(g) & = \sum_{x \in X} \int_{\mathcal{F}}\mathbf{1}_V(hgx)d\mu_G(g) \\
& = \sum_{x \in X} \int_{\mathcal{F}x}\mathbf{1}_V(hg)d\mu_G(g) \\
& \geq \int_{\mathcal{F}X}\mathbf{1}_V(hg)d\mu_G(g) = \mu_G(V).
\end{align*}
If now $K \subset \mathcal{F}$, we have 
\begin{align*}
\int_{K}\sum_{x \in X} \mathbf{1}_V(hgx) d\mu_G(g) & \geq \mu_G(V) - \int_{\mathcal{F}\setminus K}\sum_{x \in X} \mathbf{1}_V(hgx) d\mu_G(g).
\end{align*}
But
$$\sum_{x \in X}\mathbf{1}_V(hgx) = \left| X \cap g^{-1}h^{-1}V \right|$$
And, if $\mathbf{1}_V(hgx) > 0$, then $ x \in g^{-1}h^{-1}V$. Hence,
 $$\sum_{x \in X}\mathbf{1}_V(hgx) \leq \left|X^{-1}X\cap V^{-1}V\right| = 1.$$
So 
\begin{align*}
\int_{K}\sum_{x \in X} \mathbf{1}_V(hgx) d\mu_G(g) \geq \mu_G(V) - \mu_G(\mathcal{F}\setminus K).
\end{align*}
Consider $K$ compact such that $\mu_G(\mathcal{F}\setminus K) \leq \epsilon \mu_G(V)$. Then 

$$\int_{B^{-1}} \int_{K}\sum_{x \in X} \mathbf{1}_V(hgx) d\mu_G(g) d\mu_G(h)\geq (1-\epsilon)\mu_G(B)\mu_G(V).$$
On the other hand, Lemma \ref{Lemma: bound convolution and counting} implies,
$$ \int_{B^{-1}}\sum_{x \in X}\mathbf{1}_{V}(hgx) d\mu_G(h) \leq |gX \cap BV|\mu_G(V).$$
Therefore, 
$$\int_K \mu_G(V) \left|gX \cap BV \right| d\mu_G(g) \geq (1-\epsilon)\mu_G(B)\mu_G(V)$$
which concludes the proof.
\end{proof}

\begin{proof}[Proof of Proposition \ref{Proposition: Lagarias with fundamental domain}.]
Let $(K_n)_{n \geq 0}$ be an exhaustion of $G$ by compact subsets and let $V$ be a compact neighbourhood of the identity such that $XX^{-1} \cap V^{-1}V =\{e\}$. Fix $n \geq 0$. The set $X$ satisfies the hypotheses of Proposition \ref{Lemma: Double counting and finite co-volume}. Let $K_n'$ be given by Proposition \ref{Lemma: Double counting and finite co-volume} applied to $X$ and $\epsilon=1/n$. 

Since $G$ is amenable, there is a symmetric compact subset $F_n$ such that $$\mu_G(K_nK_n'F_nVK_n) \leq (1+\frac{1}{n})\mu_G(F_n).$$

Define now $F_n':=K_n'F_nV$ for all $n \geq 0$. Then $(F_n')_{\geq 0}$ is a bi-F\o lner sequence and 
\begin{align*}
|X \cap F_n'|  \geq (1 - \frac{1}{n})\frac{\mu_G(F_n)}{\mu_G(\mathcal{F})}
 \geq \frac{(1 - \frac{1}{n})}{1 + \frac{1}{n}}\frac{\mu_G(F_n')}{\mu_G(\mathcal{F})}.
\end{align*}
\end{proof}

\begin{proof}[Proof of Theorem \ref{Theorem: Lagarias-type theorem in amenable groups, first version}.]
By Proposition \ref{Proposition: Lagarias with fundamental domain} for all F\o lner sequences $(F_n)_{n \geq 0}$, 
$$ \liminf \frac{|X \cap F_n |}{\mu_G(F_n)} \geq \frac{1}{\mu_G(\mathcal{F})}.$$
So Theorem \ref{Theorem: Lagarias-type theorem in amenable groups, first version} is a consequence of Proposition \ref{Proposition: Technical version Lagarias}. 
\end{proof}

As another consequence  of Proposition \ref{Proposition: Lagarias with fundamental domain} the invariant hull $\Omega_X$ admits a proper $G$-invariant Borel probability measure (see Proposition \ref{Proposition: Statistical data and measures on the hull}). Conversely, when we merely have information on the hull rather than on a ``fundamental domain" $\mathcal{F}$, we can invoke mean ergodic theorems to conclude: 

\begin{proposition}\label{Proposition: Lagarias theorem with measures on the Hull}
Let $X$ be a uniformly discrete subset of $G$ such that $\Omega_X$ admits a proper ergodic $G$-invariant Borel probability measure $\nu$. Let $V$ be a neighbourhood of the identity such that $X^{-1}X \cap V^{-1}V = \{e\}$. Let $\mu_G$ be the Haar measure given by $\mathcal{P}_X^*\nu$. Then there is a bi-F\o lner sequence $(F_n)_{n \geq 0}$ such that 
$$\liminf \frac{|Y \cap VF_n|}{\mu_G(VF_n)} \geq 1$$
for $\nu$-almost every $Y \in \Omega_X$. 
\end{proposition}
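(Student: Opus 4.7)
The plan is to apply a pointwise ergodic theorem to the function $f(Y) := |Y \cap V|$ on $(\Omega_X, \nu)$, and then convert the resulting ergodic average into a lower bound on $|Y \cap VF_n|$ via a Tonelli computation in the spirit of Lemma \ref{Lemma: bound convolution and counting}.

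First I would reduce to the case where $V$ is a symmetric, relatively compact, open neighbourhood of the identity: replacing $V$ by $V' := V \cap V^{-1} \cap U$ for some symmetric relatively compact open $U \ni e$ preserves the hypothesis $X^{-1}X \cap (V')^{-1}V' = \{e\}$, and the bi-F\o lner property gives $\mu_G(V'F_n) \sim \mu_G(VF_n) \sim \mu_G(F_n)$, so the desired lower bound for $V'$ will transfer to $V$. With this reduction, $f = \mathbf{1}_{U^V}$ is a bounded Borel function on $\Omega_X$ (the uniform discreteness condition passes to the Chabauty closure, so $|Y \cap V| \in \{0,1\}$ for every $Y \in \Omega_X$), and by hypothesis
$$\int_{\Omega_X} f\, d\nu = \mathcal{P}_X^* \nu(V) = \mu_G(V).$$

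Next I would choose $(F_n)_{n \geq 0}$ to be a \emph{tempered} bi-F\o lner sequence; such a sequence exists because any Ornstein--Weiss bi-F\o lner sequence admits a tempered subsequence, and passage to a subsequence preserves the bi-F\o lner property. Lindenstrauss's pointwise ergodic theorem applied to the ergodic p.m.p. action of $G$ on $(\Omega_X, \nu)$ then yields, for $\nu$-a.e. $Y \in \Omega_X$,
$$\frac{1}{\mu_G(F_n)} \int_{F_n} f(gY)\, d\mu_G(g) \longrightarrow \mu_G(V).$$

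To finish, writing $f(gY) = \sum_{y \in Y} \mathbf{1}_V(gy)$ and applying Tonelli gives
$$\int_{F_n} f(gY)\, d\mu_G(g) = \sum_{y \in Y} \mu_G(F_n \cap Vy^{-1}) \leq \mu_G(V) \cdot |Y \cap V^{-1}F_n| = \mu_G(V) \cdot |Y \cap VF_n|,$$
using unimodularity for $\mu_G(Vy^{-1}) = \mu_G(V)$, the observation that $F_n \cap Vy^{-1} \neq \emptyset$ forces $y \in V^{-1}F_n$, and symmetry of $V$ for the last equality. Combined with the ergodic convergence and with $\mu_G(VF_n) \sim \mu_G(F_n)$, this yields $\liminf_n |Y \cap VF_n|/\mu_G(VF_n) \geq 1$ almost surely. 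The most delicate point is the choice of a tempered bi-F\o lner sequence, needed to apply a pointwise (rather than merely mean) ergodic theorem; everything else is a bookkeeping exercise that is automatic once the setup is in place.
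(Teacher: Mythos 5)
Your proof is correct and arrives at the same estimate, but it uses a genuinely different ergodic-theoretic input. The paper applies the von Neumann \emph{mean} ergodic theorem to $\phi_n(Y)=\int_G|gY\cap V|\,d\mu_n(g)$, which converges in $L^2(\nu)$ to $\nu(U^V)=\mu_G(V)$, and then extracts a subsequence along which the convergence is $\nu$-almost everywhere; it finishes, as you do, with Lemma~\ref{Lemma: bound convolution and counting}. You instead invoke Lindenstrauss's \emph{pointwise} ergodic theorem, which requires selecting a tempered bi-F\o lner subsequence up front but then gives almost-everywhere convergence directly, without a diagonal extraction. Both routes yield the same conclusion; the paper's version is more elementary (no temperedness hypothesis, only $L^2$ theory plus a standard subsequence trick), while yours is conceptually more direct once the tempered sequence is in hand. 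Two small bookkeeping remarks: (i) your reduction to symmetric open relatively compact $V$ is fine and in fact needed to guarantee $\mu_G(V)\in(0,\infty)$; (ii) in the Tonelli step, $F_n\cap Vy^{-1}\neq\emptyset$ forces $y\in F_n^{-1}V$ rather than $V^{-1}F_n$ as written, so one should use the symmetry of the Ornstein--Weiss sets $F_n$ (rather than of $V$) to rewrite this as $F_nV$ and then absorb the one-sided versus two-sided discrepancy with $VF_n$ into the bi-F\o lner property $\mu_G(F_nV)\sim\mu_G(VF_n)\sim\mu_G(F_n)$; the paper's own displayed line has the same cosmetic sidedness slip and resolves it the same way.
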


\begin{proof} Let $(F_n)_{n \geq 0}$ be a F\o lner sequence and let $\mu_n$ be the probability measure with density $\mu_G(F_n)^{-1}\mathbf{1}_{F_n}$ against the Haar measure. By the mean ergodic theorem, we know that $\phi_n: Y \rightarrow \int_G |gY \cap V|d\mu_n(g)$ converges in the $L^2$-norm to its average $\int_G \mathcal{P}_X\mathbf{1}_V(Y)d\nu(Y)$. But $\mathcal{P}_X\mathbf{1}_V = \mathbf{1}_{U^V}$. So $\phi_n$ converges in the $L^2$-norm to $\nu(U^V)$. Upon considering a subsequence, we may assume that the convergence is $\nu$-almost everywhere. Therefore, for $\nu$-almost all $Y$ we have $\int_G |gY \cap V| d\mu_n(g) \rightarrow \nu(U^V)$. According to Lemma \ref{Lemma: bound convolution and counting} we find:
$$\liminf \frac{|Y \cap VF_n|}{\mu_G(VF_n)} \geq \frac{\nu(U^V)}{\mu_G(V)}=1.$$
\end{proof}

It is interesting to compare the two formulae obtained in Proposition \ref{Proposition: Lagarias theorem with measures on the Hull} and Proposition \ref{Lemma: Double counting and finite co-volume}. Indeed, we see that the Haar measure $\mu_G=\mathcal{P}_X^*\nu$ seem to correspond with a Haar measure normalised so that $X$ has co-volume $1$ \emph{in both meanings of co-volume}.

\begin{proof}[Proof of Theorem \ref{Theorem: Lagarias-type theorem in amenable groups, hull version}.]
According to Proposition \ref{Proposition: Lagarias with fundamental domain}, there is a bi-F\o lner sequence $(F_n)_{n \geq 0}$ such that 
$$\liminf \frac{|Y \cap VF_n|}{\mu_G(VF_n)} \geq 1$$
for $\nu$-almost every $Y \in \Omega_X$ where $\mu_G:=\mathcal{P}_X^*\nu$. Fix one such $Y_0 \in \Omega_X$. By Proposition \ref{Proposition: Technical version Lagarias} applied to $Y_0$ there is an approximate subgroup $S$ contained in $Y_0Y_0^{-1}$ and a finite subset $F$ such that $Y_0 \subset SF$. So $S$ is an approximate lattice according to Lemma \ref{Lemma: finite co-volume passes to commensurability}.

\end{proof}

\section{The invariant hull, equivariant families and cross-sections}\label{Section: The invariant hull, equivariant families and cross-sections}

In this section, we establish additional results regarding invariant hull which we us in the proof of Theorem \ref{Theorem: Lagarias-type theorem in simple groups}. In the following, consider $G$ a second countable locally compact group and $X \subset G$ such that $\Omega_X$ admits a proper ergodic $G$-invariant Borel probability measure, say $\nu$. Assume moreover that both $X^{-1}X$ and $XX^{-1}$ are uniformly discrete. We start with a general result concerning equivariant families of discrete subsets. 

\subsection{Equivariant families of discrete subsets}

\begin{lemma}\label{Lemma: Key lemma equivariant maps of subsets}
Let $S$ be a Borel $G$-space with a probability measure preserving action of a unimodular locally compact second countable group $G$. Let $\Phi: S\rightarrow \Omega_X$ be a Borel $G$-equivariant map taking non-empty values for almost every $s$. Let $Y \subset G$ be such that $XY^{-1}$ is uniformly discrete. Then for all $Y' \in \Omega_Y$, $Y'$ is covered by finitely many left-translates of $\bigcup_{s \in S}\Phi(s)^{-1}\Phi(s)$. If, moreover, the action is ergodic, then for almost every $s \in S$, $Y'$ is covered by finitely many left-translates of $\Phi(s)^{-1}\Phi(s)$.
\end{lemma}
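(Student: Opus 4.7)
The plan is to push forward the measure from $S$ to $\Omega_X$ and then combine Ruzsa's covering lemma with a counting argument driven by the periodization identity. I set $\nu := \Phi_*\mu_S$; since $\mu_S$ is $G$-invariant and $\Phi$ is equivariant, $\nu$ is a $G$-invariant Borel probability measure on $\Omega_X$, and since $\Phi$ takes non-empty values $\mu_S$-a.e., $\nu(\{\emptyset\})=0$. By the identity recalled in Subsection \ref{Subsection: The Invariant hull}, $\mathcal{P}_X^*\nu = c\mu_G$ for some Haar measure $\mu_G$ and some $c > 0$. I then fix a symmetric open $V \subset G$ with $|XY^{-1}\cap gV| \le 1$ for every $g \in G$, and a set $Y'\in\Omega_Y$ (the case $Y'=\emptyset$ being trivial).

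Next, I set $U := \bigcup_{s\in S} \Phi(s)^{-1}\Phi(s)\subset X^{-1}X$, a symmetric set containing $e$, and take $F\subset Y'$ maximal with $f'^{-1}f \notin U$ for all distinct $f,f'\in F$. This is equivalent to $f\Phi(s)^{-1}$ and $f'\Phi(s)^{-1}$ being disjoint for every $s\in S$ and every pair of distinct $f,f'\in F$. Maximality and symmetry of $U$ then give $Y'\subset FU = F\cdot\bigcup_{s}\Phi(s)^{-1}\Phi(s)$, so it only remains to prove $|F|<\infty$.

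The bound on $|F|$ comes from a Siegel-type counting. Assuming the transfer $|Y'\Phi(s)^{-1}\cap V|\le 1$ for every $s\in S$ (a Chabauty lower-semi-continuity argument combined with the standing assumption that $X^{-1}X$ and $XX^{-1}$ are uniformly discrete, possibly after shrinking $V$), and combining with the pairwise-disjointness, I get
\[
\sum_{f\in F}|f\Phi(s)^{-1}\cap V| = |F\Phi(s)^{-1}\cap V|\le 1
\]
for every $s$. Integrating against $\mu_S$, rewriting each summand via $|f\Phi(s)^{-1}\cap V| = |\Phi(s)\cap V^{-1}f|$, and evaluating with $\mathcal{P}_X^*\nu = c\mu_G$ and unimodularity, each integral equals $c\mu_G(V)$, whence $|F|c\mu_G(V)\le 1$ and $|F|\le (c\mu_G(V))^{-1}$.

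For the ergodic case, ergodicity of $\mu_S$ lifts to $\nu$; the $G$-invariant Borel set $\{Z\in\Omega_X : g\in Z^{-1}Z\}$ then has $\nu$-measure in $\{0,1\}$ for every $g$. Let $\Gamma$ denote the countable, symmetric subset of $X^{-1}X$ comprising those $g$ of measure $1$; then $\Gamma\subset \Phi(s)^{-1}\Phi(s)$ for $\mu_S$-a.e.\ $s$ by a countable intersection of full-measure events. Replaying the previous argument with $U$ replaced by $\Gamma$ produces a finite $F$ with $Y'\subset F\Gamma\subset F\Phi(s)^{-1}\Phi(s)$ almost surely; the pairwise-disjointness at the counting step now only holds a.s., but a finite intersection of a.s.\ events remains a.s., so the bound on $|F|$ goes through unchanged. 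The main obstacle is the uniform-discreteness transfer step: establishing $|Y'\Phi(s)^{-1}\cap V|\le 1$ (or its analogue with some constant $C$) for every $s$ is where the non-abelian geometry really enters, and it is the only step that is not routine Ruzsa/Fubini bookkeeping.
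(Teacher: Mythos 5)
Your high-level plan matches the paper: push forward $\mu_S$ to a proper $G$-invariant measure $\nu$ on $\Omega_X$, use $\mathcal{P}_X^*\nu = c\mu_G$, invoke a covering lemma, and bound the size of the covering set by a periodization count. The ergodic tail is also right in spirit. But there is a genuine gap at the counting step, which you yourself flag as the hard part, and the ``Chabauty lower semi-continuity'' you invoke will not close it.

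Concretely, you need $\left|Y'\Phi(s)^{-1}\cap V\right|\le 1$ (or at least a uniform bound) for $Y'\in\Omega_Y$ and $\Phi(s)\in\Omega_X$. Write $Y' = \lim h_n Y$ and $\Phi(s) = \lim g_n X$ with typically \emph{different} translating sequences $(h_n)$, $(g_n)$. If $y_1'(x_1')^{-1}$ and $y_2'(x_2')^{-1}$ both lie in $V$, then for large $n$ you only get a relation of the form $g_n\bigl(x_{1,n}y_{1,n}^{-1}y_{2,n}x_{2,n}^{-1}\bigr)g_n^{-1}\in V^{-1}V$ with $x_{i,n}\in X$, $y_{i,n}\in Y$. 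The conjugation by $g_n$ --- which is generically unbounded --- destroys any control coming from uniform discreteness of $XY^{-1}$ (or of $X^{-1}X$, $XX^{-1}$). Lower semi-continuity of $Z\mapsto |Z\cap V|$ in the Chabauty topology only helps with a single net in $\mathcal{C}(G)$, and $Y'\Phi(s)^{-1}$ is not a Chabauty limit of left translates of $XY^{-1}$, so the argument does not transfer.

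The paper sidesteps this by working with $F\subset (Y')^{-1}$ instead of $F\subset Y'$, and then using that $G\cdot Y$ is dense in $\Omega_Y$ to place the finite set $F$ inside a translate $Y^{-1}h$ of the \emph{original} $Y^{-1}$. The counting then reduces to bounding $\left|\Phi(s)Y^{-1}\cap gV\right|$: here the Chabauty limit $\Phi(s)=\lim g_n X$ sits on the \emph{left}, so $\Phi(s)Y^{-1}$ is approximated by $g_n\,XY^{-1}$, a \emph{left} translate of $XY^{-1}$, and uniform discreteness of $XY^{-1}$ bites with no conjugation. Equivalently, for $x_1',x_2'\in\Phi(s)$ and $y_1,y_2\in Y$ with $x_1'y_1^{-1},x_2'y_2^{-1}\in gV$ one gets $y_1(x_1')^{-1}x_2'y_2^{-1}\in V^{-1}V$, and $(x_1')^{-1}x_2'\in X^{-1}X$ since $(\Phi(s))^{-1}\Phi(s)\subset X^{-1}X$, so the element lies in $YX^{-1}XY^{-1}\cap V^{-1}V=\{e\}$. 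Fixing $Y$ (rather than taking a second Chabauty limit) is the essential maneuver your write-up is missing; once it is in place, the rest of your Fubini and Haar-invariance manipulations, and your ergodic-case refinement, go through.
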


The proof of Lemma \ref{Lemma: Key lemma equivariant maps of subsets} is a first instance of the overall strategy. Below we will show how using periodization maps will allow us to reformulate the problem into a simple counting question, which we will then solve using elementary covering arguments. 

The proof method will essentially follow the method found in \cite{machado2020apphigherrank}. There, we proved a similar result valid more generally for maps that are not $G$-equivariant but satisfy a functional identity. 

\begin{proof}
Let $\nu_0$ denote a $G$-invariant probability measure on $S$.  Take $F \subset (Y')^{-1}$ finite such that for $\nu_0$-almost every $s \in S$ the subsets $(\Phi(s)f)_{f \in F}$ are pairwise disjoint. Let $V \subset G$ be a neighbourhood of the identity such that $YX^{-1}XY^{-1} \cap V^{-1}V =\{e\}$. Therefore, for all $g \in G$ and $X' \in \Omega_X$, we have $|X'Y^{-1} \cap gV| \leq 1$ for all $X' \in \Omega_X$. Since $F$ is finite and $G \cdot Y$ is dense in $\Omega_Y$, there is $h \in G$ such that $F \subset Y^{-1}h$. Now we have 
\begin{align*}\int_S \sum_{x \in \Phi(s)F}\mathbf{1}_{Vh}(x)d\nu_0(s)&  =  \int_S \sum_{x \in \Phi(s)Fh^{-1}}\mathbf{1}_{V}(x)d\nu_0(s) \\
& \leq   \int_S \sum_{x \in \Phi(s)Y^{-1}}\mathbf{1}_{V}(x)d\nu_0(s) \\
& \leq 1.
\end{align*}
On the other hand, 
\begin{align*}
\int_S \sum_{x \in \Phi(s)F}\mathbf{1}_{Vh}(x)d\nu_0(s) & = \sum_{f \in F} \int_S \sum_{x \in \Phi(s)f}\mathbf{1}_{Vh}(x)d\nu_0(s) \\
& = \sum_{f \in F} \int_S \sum_{x \in \Phi(s)}\mathbf{1}_{Vhf^{-1}}(x)d\nu_0(s) \\
& = \sum_{f \in F} \mathcal{P}_X^*\Phi_*\nu_0(Vhf^{-1}) = |F|\mathcal{P}_X^*\Phi_*\nu_0(V)
\end{align*}
where the interversion in the first line was because of the disjointness property of $F$ and we use in the last line that $\mathcal{P}_X^*\Phi_*\nu_0$ is a Haar measure of a unimodular locally compact group. Thus, $|F| \leq \mathcal{P}_X^*\Phi_*\nu_0(V)^{-1}$. Hence, $$(Y')^{-1} \subset \bigcup_{s \in S} \Phi^{-1}(s)\Phi(s)F$$ by the covering lemma (Lemma \ref{Lemma: Ruzsa's covering lemma for families of sets}). 

Suppose now that $\nu_0$ is ergodic. For all $g \in G$ the set 
$$S_g:=\{s \in S : g \in \Phi^{-1}(s)\Phi(s)\}$$ is $G$-invariant and Borel. Indeed, since $\Phi(s)^{-1}\Phi(s)$ is a uniformly discrete subset, we have $g \in \Phi^{-1}(s)\Phi(s)$ if and only if 
$$\Phi(s) \in \bigcap_{n \geq 0}\bigcup_{d \in D} U^{dV_n} \cap U^{dV_ng}$$ where $(V_n)_{n \geq 0}$ is a neighbourhood basis at $e$ and $D$ is a dense subset of $G$. Because $\nu$ is ergodic we thus have that $S_g$ is either null or co-null. Now, $\bigcup_{s \in S} \Phi^{-1}(s)\Phi(s)$ is contained in  $X^{-1}X$ which is countable. Define 
$$\tilde{S}=\bigcap_{g : \nu(S_g)=1}S_g \setminus \left( \bigcup_{g : \nu(S_g)=0}S_g\right).$$
Then for every $s \in \tilde{S}$ we have $\bigcup_{s' \in \tilde{S}} \Phi^{-1}(s')\Phi(s') = \Phi^{-1}(s)\Phi(s)$. Hence, $$Y' \subset \bigcup_{s' \in \tilde{S}} \Phi^{-1}(s')\Phi(s')F = \Phi^{-1}(s)\Phi(s)F$$ for any $s \in \tilde{S}$. 
\end{proof}

\subsection{Canonical transverse and hitting times}

 The proof of Theorem \ref{Theorem: Lagarias-type theorem in simple groups} will make central use of the following notion studied by Bj\"{o}rklund, Hartnick and Karasik in \cite{https://doi.org/10.48550/arxiv.2108.09064}: 
 
 \begin{definition}\label{Definition: Canonical transverse and its null subsets}
 The \emph{canonical transverse} is defined as the subset 
 $$\mathcal{T}_e:=\{Y \in \Omega_X : e \in Y\}.$$ A Borel subset $B \subset \mathcal{T}_e$ is said \emph{null} if $\mu(GB)=0$.
 \end{definition} 
 
Let us give another characterisation of $\mathcal{T}_e$. If $Y \in \mathcal{T}_e$, then there is a sequence $(g_n )_{n \geq 0}$ of elements of $G$ such that $g_nX \rightarrow Y$. But $e\in Y$, so we can find a sequence $x_n \in X$ such that $g_nx_n \rightarrow e.$ In turn, this means that $x_n^{-1}X \rightarrow Y$. Hence, 
$$ \mathcal{T}_e = \overline{\{x^{-1}X : x \in X\}}.$$

 In general, $\mathcal{T}_e$ is a compact subset of $\Omega_X$ and satisfies:
 \begin{enumerate}
 \item $G\mathcal{T}_e = \Omega_X$;
 \item $W \times \mathcal{T}_e \rightarrow \Omega_X$ is one-to-one if $W^{-1}W \cap Y_1Y_2^{-1} = e$ for every $Y_1,Y_2 \in \mathcal{T}_e$.
 \end{enumerate}

  But, under the assumptions of Theorem \ref{Theorem: Lagarias-type theorem in simple groups}, there is a neighbourhood $W$ of the identity such that $W^{-1}W \cap x_1^{-1}XX^{-1}x_2 = \{e\}$ for all $x_1, x_2 \in X$. So (2) holds for this choice of $W$. Therefore, for any Borel subset $B \subset \mathcal{T}_e$, $G B$ is a Borel subset of $\Omega_X$ (\cite[Corollary A.6]{zimmer2013ergodic}). In other words, $\mathcal{T}_e$ is a cross-section.
  
  \begin{remark}
   Rather than simply defining null subsets of the canonical transverse, they study in \cite{https://doi.org/10.48550/arxiv.2108.09064} the notion of transverse measure that formalises the idea of restricting a measure on $\Omega_X$ to a measure on $\mathcal{T}_e$. In fact, there is on $\mathcal{T}_e$ a finite Borel measure $\eta$ such that the restriction of $\nu$ to $U^W=W\mathcal{T}_e$ is equal to the product measure $(\mu_G)_{|W} \otimes \eta$ (see e.g. \cite[\S 4]{https://doi.org/10.48550/arxiv.2108.09064}). In particular, if $B \subset \mathcal{T}_e$ is measurable and $g \in G$ satisfies $gB \subset \mathcal{T}_e$, then $\eta(gB)=\eta(B)$. Moreover, a measurable subset $B \subset \mathcal{T}_e$ is null in the sense of Definition \ref{Definition: Canonical transverse and its null subsets} if and only if $\eta(B)=0$.
   \end{remark}
   
   We will only use a simple consequence of the existence of a transverse measure.
   
   \begin{lemma}\label{Lemma: Existence support on the canonical transverse}
   Let $\phi: \mathcal{T}_e \rightarrow H$ be a measurable map taking values in a second countable topological space $H$. Then there is $h \in H$ such that for any neighbourhood $V \subset H$ of $h$ the subset $\phi^{-1}(V)$ is non-null. 
   \end{lemma}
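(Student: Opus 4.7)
The plan is to proceed by contradiction, using second countability of $H$ to reduce an uncountable obstruction to a countable one and then invoking the fact that $\mathcal{T}_e$ itself is non-null (since $G\mathcal{T}_e = \Omega_X$ carries the probability measure $\nu$).

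First I would record the base case: the subset $\mathcal{T}_e \subset \Omega_X$ is non-null in the sense of Definition \ref{Definition: Canonical transverse and its null subsets}. Indeed, $G\mathcal{T}_e=\Omega_X$, so $\nu(G\mathcal{T}_e)=1>0$. I would then observe that the family of null Borel subsets of $\mathcal{T}_e$ is closed under countable unions, because $G(\bigcup_n B_n)=\bigcup_n GB_n$ and so $\nu(G(\bigcup_n B_n)) \leq \sum_n \nu(GB_n)$.

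Next I would assume for contradiction that every $h \in H$ admits an open neighbourhood $V_h$ such that $\phi^{-1}(V_h)$ is null. Fix a countable basis $(W_n)_{n\geq 0}$ of the topology of $H$, which exists by second countability. For each $h\in H$, choose an integer $n(h)$ with $h \in W_{n(h)} \subset V_h$; then $\phi^{-1}(W_{n(h)})\subset \phi^{-1}(V_h)$ is null as a Borel subset of a null set. Let $\mathcal{N}:=\{n(h): h \in H\}$; this is a countable set of indices, and by construction the family $(W_n)_{n\in \mathcal{N}}$ covers $H$. Therefore
\[
\mathcal{T}_e = \phi^{-1}(H) = \bigcup_{n \in \mathcal{N}} \phi^{-1}(W_n),
\]
and each term on the right is null. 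By the first paragraph, $\mathcal{T}_e$ is null, contradicting the base case. Hence there exists $h \in H$ such that $\phi^{-1}(V)$ is non-null for every open neighbourhood $V$ of $h$.

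The only potential subtlety is ensuring that the sets $\phi^{-1}(W_n)$ are measurable so that ``null'' makes sense; this follows from $\phi$ being Borel measurable and $W_n$ being open in $H$. Everything else is routine point-set topology combined with the countable-additivity observation about null subsets of $\mathcal{T}_e$, so I do not expect any real obstacle.
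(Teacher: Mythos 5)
Your proof is correct and follows essentially the same route as the paper, which states that the lemma reduces to the non-emptiness of the support of the push-forward of the transverse measure, "guaranteed by the second countability of $H$" — your contradiction argument via a countable basis is precisely the standard unpacking of that fact. The one small difference is that you bypass the transverse measure $\eta$ entirely and work directly with Definition~\ref{Definition: Canonical transverse and its null subsets}, needing only that null subsets of $\mathcal{T}_e$ form a $\sigma$-ideal and that $\mathcal{T}_e$ itself is non-null; this makes the argument slightly more self-contained without changing its substance.
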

   
   Lemma \ref{Lemma: Existence support on the canonical transverse} essentially reduces to saying that the support of the push-forward of the transverse measure is not empty. But this is guaranteed by the second countability of $H$.

 \begin{definition}
 For $Y \in \Omega_X$ and $B \subset \mathcal{T}_e$ the set of \emph{hitting times} is the subset of $G$ defined as follows,
 $$T_h(Y,B):=\{g \in G : gY \in B\}.$$
 \end{definition}
 
 We list now a number of properties satisfied by $T_h$. 
 
 \begin{lemma}\label{Lemma: Miscellaneous hitting times sets}
 With $X,\mathcal{T}_e$ as above. Take $Y \in \Omega_X$ and $B \subset \mathcal{T}_e$. Then:
 \begin{enumerate}
 \item for all $g \in G$, 
 $$T_h(gY,B) = T_h(Y,B)g^{-1};$$
 \item if $g \in T_h(Y,B)T_h(Y,B)^{-1}$ then there is $Y' \in B$ such that $gY'\in B$ i.e. 
 $$g \in \mathcal{R}(B)$$;
 \item we have the inclusions,
 $$T_h(Y,B)T_h(Y,B)^{-1} \subset \bigcup_{Z \in B} Z^{-1} \subset X^{-1}X;$$
 
 \item and,  for $W \subset G$ open,
  $$\{Y \in \Omega_X : T(Y,B) \cap W \neq \emptyset \} = W^{-1}B.$$ 
 \end{enumerate}
 \end{lemma}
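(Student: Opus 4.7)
The plan is to verify each of the four statements essentially by chasing the definitions, using the key input that $\mathcal{T}_e$ consists of subsets containing the identity and that $X^{-1}X$ is uniformly discrete (hence closed in $G$).

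For (1), I would write $h \in T_h(gY,B)$ iff $hgY \in B$ iff $hg \in T_h(Y,B)$ iff $h \in T_h(Y,B)g^{-1}$, which is immediate. For (2), given $g = h_1 h_2^{-1}$ with $h_1, h_2 \in T_h(Y,B)$, I would simply set $Y' := h_2 Y$; then $Y' \in B$ by definition of $h_2 \in T_h(Y,B)$, and $gY' = h_1 h_2^{-1} h_2 Y = h_1 Y \in B$, so $g \in \mathcal{R}(B)$.

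For (3), the first inclusion follows from (2): given $g \in T_h(Y,B)T_h(Y,B)^{-1}$, pick $Y' \in B$ with $gY' \in B$. Since $B \subset \mathcal{T}_e$, we have $e \in gY'$, hence $g^{-1} \in Y'$, i.e.\ $g \in (Y')^{-1} \subset \bigcup_{Z \in B} Z^{-1}$. For the second inclusion $\bigcup_{Z \in B} Z^{-1} \subset X^{-1}X$, I would use the characterisation $\mathcal{T}_e = \overline{\{x^{-1}X : x \in X\}}$ noted just after Definition~\ref{Definition: Canonical transverse and its null subsets}: every $Z \in \mathcal{T}_e$ is a Chabauty limit of subsets $x_n^{-1}X \subset X^{-1}X$, so any $z \in Z$ is a limit of points in $X^{-1}X$; since $X^{-1}X$ is uniformly discrete and hence closed in $G$, this forces $z \in X^{-1}X$, and closure under inversion of $X^{-1}X$ gives $Z^{-1} \subset X^{-1}X$.

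For (4), I would argue by mutual inclusion. If $g \in T_h(Y,B) \cap W$, then $gY \in B$ and $Y = g^{-1}(gY) \in W^{-1}B$. Conversely, if $Y = w^{-1}Z$ with $w \in W$ and $Z \in B$, then $wY = Z \in B$, so $w \in T_h(Y,B) \cap W$. None of these steps presents a real obstacle; the only point requiring any care is the uniform-discreteness argument in (3) to conclude that a Chabauty-limit point of $X^{-1}X$ again lies in $X^{-1}X$, and that is handled by the standard fact that uniformly discrete subsets of $G$ are closed.
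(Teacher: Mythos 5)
Your proof is correct and follows essentially the same route as the paper: parts (1), (2), and (4) are direct definition-chasing as you describe, and the first inclusion in (3) uses the same observation that $e \in gY'$ forces $g \in (Y')^{-1}$. For the second inclusion in (3), the paper simply cites \cite{bjorklund2016approximate}; your explicit argument via the characterisation $\mathcal{T}_e = \overline{\{x^{-1}X : x \in X\}}$ and the closedness of the uniformly discrete set $X^{-1}X$ is a correct unpacking of that citation.
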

 
 \begin{proof}
 We have the equivalences: 
 $$g_0 \in T_h(gY,B) \Leftrightarrow g_0gY \in B \Leftrightarrow g_0g \in T_h(Y,B).$$ So $T_h(gY,B) = T_h(Y,B)g^{-1}$. This proves (1). Take $g_1, g_2 \in T_h(Y,B)$. Set $Y_1:=g_1Y$ and $Y_2:=g_2Y$. Then $Y_1,Y_2 \in B$ and $g_1g_2^{-1}Y_2 = Y_1$. So taking $Y'=Y_2$ works and (2) is proved. Furthermore, since $e \in Y_1$ there is $y \in Y_2$ such that $g_1g_2^{-1}y=e$ i.e. $g_1g_2^{-1} \in Y_2^{-1}$. But $Y_2 \in B$ so the first inclusion of (3) is proved. If, now, we take any $Z \in B$, then $Z \in \mathcal{T}_e$. In other words, $e \in Z$. By \cite{bjorklund2016approximate}, we have $Z \subset X^{-1}X$. This proves (3). Finally, remark that: 
  $$\{Y \in \Omega_X : T(Y,B) \cap W \neq \emptyset \}  = \{Y \in \Omega_X: WY \cap B \neq \emptyset\} = W^{-1}B.$$
  And (4) is proved.
 \end{proof}

We are now in position to apply Lemma \ref{Lemma: Key lemma equivariant maps of subsets} to obtain information on sets of hitting times. 

\begin{lemma}\label{Lemma: Key lemma hitting times}
Take $B \subset \mathcal{T}_e$ that is not a null subset. Suppose that $Z_0 \subset G$ is any subset such that $XZ_0^{-1}$ is uniformly discrete. Then for any $Z \in \Omega_{Z_0}$ with $e \in Z$ there is $F \subset G$ finite such that $$Z \subset F T_h(Y,B)T_h(Y,B)^{-1}$$
for almost all $Y \in \Omega_X$.
\end{lemma}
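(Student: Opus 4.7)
The plan is to define $\Phi: \Omega_X \to \mathcal{C}(G)$ by $\Phi(Y) := T_h(Y, B)^{-1}$ and to apply Lemma \ref{Lemma: Key lemma equivariant maps of subsets} with $S = \Omega_X$ carrying the ergodic invariant measure $\nu$, and with $Y' := Z \in \Omega_{Z_0}$. The equivariance $\Phi(gY) = g\Phi(Y)$ is immediate from Lemma \ref{Lemma: Miscellaneous hitting times sets}(1); Borel measurability is part of the Bj\"{o}rklund--Hartnick--Karasik cross-section formalism recalled in the introduction, and the same formalism furnishes a set $X_0 \subset \mathcal{C}(G)$ so that $\Phi(Y) \in \Omega_{X_0}$ for $\nu$-a.e.\ $Y$. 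Non-emptiness $\Phi(Y) \neq \emptyset$ holds exactly when $Y \in GB$; since $B$ is non-null this set has positive $\nu$-mass, and ergodicity of $\nu$ upgrades this to full measure.

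The substantive step is to verify the uniform-discreteness hypothesis required by Lemma \ref{Lemma: Key lemma equivariant maps of subsets}, namely that $X_0 Z_0^{-1}$ is uniformly discrete (in the role of ``$XY^{-1}$'' there). I would fix $Y_1 \in B$ with $\Phi(Y_1) \neq \emptyset$ and take $X_0 := \Phi(Y_1)$. Any $g \in T_h(Y_1,B)$ satisfies $gY_1 \in B \subset \mathcal{T}_e$, i.e.\ $g^{-1} \in Y_1$, so $X_0 = T_h(Y_1,B)^{-1} \subset Y_1$. It therefore suffices to show that $Y_1 Z_0^{-1}$ inherits uniform discreteness from $XZ_0^{-1}$. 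This is a standard Chabauty continuity argument: if an open $V \ni e$ witnesses $|XZ_0^{-1} \cap gV| \leq 1$ for all $g$, and if $|Y_1 Z_0^{-1} \cap gV| \geq 2$ for some $g$, then choosing $g_n \in G$ with $g_n X \to Y_1$ and lifting the two witnesses via Chabauty convergence produces, for $n$ large, two distinct points of $XZ_0^{-1}$ in $g_n^{-1}gV$, a contradiction. Hence $X_0 Z_0^{-1} \subset Y_1 Z_0^{-1}$ is uniformly discrete.

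With all hypotheses of Lemma \ref{Lemma: Key lemma equivariant maps of subsets} satisfied, its ergodic conclusion yields that for $\nu$-almost every $Y \in \Omega_X$, $Z$ is covered by finitely many left-translates of $\Phi(Y)^{-1}\Phi(Y) = T_h(Y,B)T_h(Y,B)^{-1}$, exactly as required. I expect the only mild technical hurdle to be the stability of uniform discreteness under passage to the Chabauty hull, which enters through the identification of $X_0$; everything else is a mechanical transcription of the equivariant-families framework into the language of hitting times. The hypothesis $e \in Z$ in the statement does not seem to play a role in this argument and is presumably present to line up cleanly with downstream applications.
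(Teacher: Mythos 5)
Your proof takes essentially the same route as the paper's: apply Lemma~\ref{Lemma: Key lemma equivariant maps of subsets} to $\Phi(Y)=T_h(Y,B)^{-1}$, deduce equivariance and measurability from Lemma~\ref{Lemma: Miscellaneous hitting times sets}, obtain a.e.\ non-emptiness from ergodicity and the non-nullity of $B$, and verify the uniform discreteness hypothesis. The one place you diverge is in that last step: the paper cites Lemma~\ref{Lemma: Miscellaneous hitting times sets}(3) to get the pointwise inclusion $T_h(Y,B)T_h(Y,B)^{-1}\subset X^{-1}X$ for \emph{every} $Y$, which directly gives the bound $|\Phi(Y)Z_0^{-1}\cap gV|\le 1$ that the proof of Lemma~\ref{Lemma: Key lemma equivariant maps of subsets} needs, whereas you only establish the containment $\Phi(Y_1)\subset Y_1$ for a single $Y_1\in B$ and then rely, without justification, on $\Phi(Y)\in\Omega_{\Phi(Y_1)}$ for $\nu$-a.e.\ $Y$ — which requires $Y_1$ to be chosen $\nu$-generic (as the paper does explicitly via ergodicity when it selects the representative $Y'$). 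This is a small but real gap; it is most cleanly closed by using part~(3) of Lemma~\ref{Lemma: Miscellaneous hitting times sets} as the paper does, since that gives the needed discreteness uniformly in $Y$ and sidesteps the genericity issue entirely.
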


\begin{proof}
By Lemma \ref{Lemma: Key lemma equivariant maps of subsets} it suffices to show that: 
\begin{align*}
\Omega_X & \longrightarrow \mathcal{C}(G) \\
Y & \longmapsto T_h(Y,B)^{-1}
\end{align*}
is a well-defined $G$-equivariant Borel map, takes values in $\Omega_{X'}$ for $X'$ such that $X'Z_0$ is uniformly discrete and takes non-empty values almost always. We will see this as a consequence of Lemma \ref{Lemma: Miscellaneous hitting times sets}. Part (1) of Lemma \ref{Lemma: Miscellaneous hitting times sets} implies $G$-equivariance and part (2) implies measurability. Since $\nu$ is ergodic, there is $Y'$ such that for almost all $Y \in \Omega_X$, $T_h(Y,B)^{-1} \in \Omega_{X'}$ with $X'=T_h(Y',B)$. In particular,  $X'^{-1}X' \subset X^{-1}X$  so $X'Z_0^{-1}$ is uniformly discrete. It remains to prove that $T_h(Y,B)$ takes non-empty values for $\nu$-almost all $Y$. But $T_h(Y,B)$ is non-empty as soon as $Y \in G\cdot B$. Since $B$ is not be null, $G\cdot B$ is a $G$-invariant Borel subset of $\Omega_X$ of full measure and, hence, $\nu(G \cdot B)=1$ as $\nu$ is ergodic. 
\end{proof}

\subsection{Cocycles on the invariant hull}
The kind of cocycles we are going to work with were first considered in \cite{MR4245602} to study certain Kazhdan-type and Haager-up-type properties of uniform model sets. In \cite{machado2020apphigherrank} we considered related cocycles on the extended invariant hull to show that superrigidity theorems hold for $\star$-approximate lattices. These cocycles are built from Borel sections of the invariant hull:

\begin{definition}\label{Definition: Section of set of closed subsets}
 A map $s: \Omega_X \rightarrow G$ is a \emph{Borel section} if it is Borel, defined $\nu$-almost everywhere and for $\nu$-almost all $Y \in \Omega_X$ we have $s(Y) \in Y$. Given a Borel section $s$, one can build a cocycle defined for all $g \in G$ and $\nu$-almost all $Y \in \Omega_X$ by
 $$\alpha_s(g,Y) := s(gY)^{-1}gs(Y).$$ 
\end{definition}
As it is stated here, our definition implies that if $\nu$ is not proper, then no Borel section exists. We refer to \cite{MR4245602, machado2020apphigherrank} for this and more concerning these specific cocycles on invariant hulls. 

\subsection{Restricting maps to the canonical transverse}

When $X$ is a subgroup and, thus, $\Omega_X\setminus\{\emptyset\}$ is a transitive space, Zimmer uses a result of Mackey to show that cocycle superrigidity implies Margulis' superrigidity theorem, see \cite{zimmer2013ergodic, MR1090825}. Mackey's result asserts that a cocycle on a transitive space can be completely understood simply by looking at its restriction at one point. Precisely, given $Y \in \Omega_X\setminus\{\emptyset\}$ the map that sends a cocycle $\alpha$ to $\alpha_{|\{x\} \times Stab(x)}$ provides a bijection between cohomology classes of cocycles $ \Omega_X\setminus\{\emptyset\} \times G \rightarrow H$ and group homomorphisms $Stab(x)\rightarrow H$ (\cite[4.2.15, 4.2.16 and 5.2.5]{zimmer2013ergodic}).

When $X$ is not assumed to be a subgroup, $\Omega_X\setminus\{\emptyset\}$ is not a transitive space any more. However, as observed by Bj\"{o}rklund, Hartnick and Karasik (private communication), we can partially generalise Mackey's result by considering the restriction of cocycles of certain types to the canonical transverse.

\begin{lemma}[Bj\"{o}rklund--Hartnick--Karasik]\label{Lemma: Consequence superrigidity}
Let $s: \Omega_X \rightarrow G$ denote a Borel section. Let $\tau: \langle X \rangle \rightarrow \mathbb{H}(k)$ be a group homomorphism where $k$ is a local field and $\mathbb{H}$ is a simple algebraic group defined over $k$. Suppose that the  cocycle $\tau \circ \alpha_s$ satisfies the conclusions of Zimmer's cocycle superrigidity theorem. Namely: 
\begin{enumerate}[label=(\roman*)]
\item either $\tau \circ \alpha_s$ is cohomologous with a cocycle taking values in a compact subgroup of $\mathbb{H}(k)$;
\item or there are a continuous group homomorphism $\pi: G \rightarrow \mathbb{H}(k)$ and a measurable map $\phi: \Omega_X \rightarrow \mathbb{H}(k)$ such that for all $g \in G$ and for $\nu$-almost all $Y \in \Omega_X$, 
$$ \tau \circ \alpha_s(g,Y) = \phi(gY)^{-1}\pi(g)\phi(Y).$$
\end{enumerate}

 Then there are a $G$-invariant Borel subset $\Omega$ of full measure and a measurable map $\psi: \mathcal{T}_e \cap \Omega \rightarrow H$ such that: 
 
\begin{enumerate}
\item in case (i), $\psi(gY)\tau(g)\psi(Y)^{-1}$ takes values in a compact subgroup of $\mathbb{H}(k)$ when $Y$ ranges through $\Omega \cap \mathcal{T}_e$ and $g$ ranges through $T_h(Y, \Omega \cap \mathcal{T}_e)$;
\item in case (ii), there is a continuous group homomorphism $\pi: G \rightarrow \mathbb{H}(k)$ such that for all $ Y \in \Omega \cap \mathcal{T}_e$ and $g \in T_h(Y, \Omega \cap \mathcal{T}_e)$, 
$$ \tau (g) = \psi(gY)^{-1}\pi(g)\psi(Y).$$
\end{enumerate}
\end{lemma}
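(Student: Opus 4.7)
The core idea is to transfer the cocycle identity from the invariant hull down to the canonical transverse $\mathcal{T}_e$. The key observation is that when $Y \in \mathcal{T}_e$ and $g \in T_h(Y, \mathcal{T}_e)$, the three factors $s(gY), g, s(Y)$ entering $\alpha_s(g, Y) = s(gY)^{-1} g s(Y)$ all lie in $\langle X \rangle$: indeed $e \in Y$ forces $Y \subset X^{-1}X \subset \langle X \rangle$ (by Chabauty closure of the uniformly discrete set $X^{-1}X$) and likewise $gY \subset \langle X \rangle$, whence $g \in Y^{-1} \subset \langle X \rangle$, and both $s(Y), s(gY)$ lie in $\langle X \rangle$ as well. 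Consequently $\tau$ acts as a homomorphism on the factorization:
\[
\tau(\alpha_s(g, Y)) = \tau(s(gY))^{-1}\, \tau(g)\, \tau(s(Y)).
\]

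First, I would strengthen Zimmer's conclusion from the ``for all $g$, $\nu$-almost every $Y$'' form to a pointwise identity on a $G$-invariant $\nu$-conull set $\Omega \subset \Omega_X$; this is a standard consequence of the cocycle relation satisfied by $\tau \circ \alpha_s$. Since the transverse measure formalism gives $\nu|_{W \mathcal{T}_e} = \mu_G|_W \otimes \eta$ and $\Omega^c$ is $G$-invariant, the subset $\Omega^c \cap W\mathcal{T}_e$ corresponds under the product identification to $W \times (\Omega^c \cap \mathcal{T}_e)$; Fubini then gives $\eta(\Omega^c \cap \mathcal{T}_e) = 0$, so $\Omega \cap \mathcal{T}_e$ has full transverse measure.

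Next, for $Y \in \Omega \cap \mathcal{T}_e$ and $g \in T_h(Y, \Omega \cap \mathcal{T}_e)$, I combine the superrigidity hypothesis with the factorization above. In case (ii), substituting $\tau(\alpha_s(g, Y)) = \phi(gY)^{-1} \pi(g) \phi(Y)$ and solving for $\tau(g)$ gives
\[
\tau(g) = [\phi(gY) \tau(s(gY))^{-1}]^{-1}\, \pi(g)\, [\phi(Y) \tau(s(Y))^{-1}],
\]
so setting $\psi(Y) := \phi(Y) \tau(s(Y))^{-1}$ yields exactly $\tau(g) = \psi(gY)^{-1} \pi(g) \psi(Y)$. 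In case (i), where $\phi(gY)^{-1} \tau(\alpha_s(g, Y)) \phi(Y) \in K$ for a fixed compact subgroup $K \subset \mathbb{H}(k)$, an analogous substitution together with the definition $\psi(Y) := \phi(Y)^{-1} \tau(s(Y))^{-1}$ gives $\psi(gY) \tau(g) \psi(Y)^{-1} \in K$. Measurability of $\psi$ is immediate because $s$ is Borel and $\langle X \rangle$ is countable discrete, so $\tau \circ s$ is Borel.

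The main obstacle is the measure-theoretic step of passing from a $\nu$-almost-everywhere identity on $\Omega_X$ to a transverse-almost-everywhere identity on $\mathcal{T}_e$, which is itself $\nu$-null. This is precisely what the transverse measure machinery of Bj\"orklund--Hartnick--Karasik is designed for, though some care is needed to arrange the conull set $\Omega$ to be $G$-invariant (so that the Fubini argument applies cleanly) and to upgrade the cocycle identity to hold pointwise rather than merely almost-everywhere in $(g, Y)$. Once $\Omega$ is in hand, the algebraic content of the lemma reduces to a direct rearrangement, because $\tau$ is only ever evaluated on elements of the countable group $\langle X \rangle$.
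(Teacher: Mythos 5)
Your proof is correct and follows the same two-step strategy as the paper's: (a) upgrade Zimmer's a.e.\ conclusion to a pointwise identity on a $G$-invariant conull Borel set $\Omega$, and (b) restrict to the canonical transverse by exploiting the fact that for transverse pairs $(g,Y)$ every factor of $\alpha_s(g,Y)=s(gY)^{-1}gs(Y)$ lies in $X^{-1}X\subset\langle X\rangle$, so $\tau$ genuinely distributes over the product. Your bookkeeping in step (b) is slightly cleaner than the paper's: by absorbing $\tau(s(Y))^{-1}$ into $\psi$ your formula is valid for an arbitrary Borel section, whereas the paper's final line uses $s(Y)=s(gY)=e$ for $Y\in\Omega\cap\mathcal{T}_e$, which only holds if one first normalises the section to pick out $e$ on the transverse (harmless, and consistent with the remark on section-independence, but left implicit). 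For step (a), invoking the upgrade as ``standard'' is fair, but note that this is where most of the paper's effort sits: it constructs an auxiliary correction map $s'$ via \cite[Lemma B.8]{zimmer2013ergodic}, builds the section $s''$, and uses the strict cocycle identity for $\alpha_{s''}$ to extend the relation from ``a.e.\ $g$'' to ``all $g\in G$'' for each $Y\in\Omega$; if you flesh out your proof you would need to reproduce this mechanism. Your Fubini argument, using the product decomposition $\nu|_{W\mathcal{T}_e}=\mu_G|_W\otimes\eta$ to show $\Omega\cap\mathcal{T}_e$ has full $\eta$-measure, is correct and makes explicit a point the paper passes over silently.
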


\begin{proof}
We explain the proof under assumption (ii), the proof of (1) will then follow with minor changes. By a Fubini argument, for $\nu$-almost all $Y \in \Omega_X$ and almost all $g \in G$ we have, 
\begin{equation}
 \tau \circ \alpha_s(g,Y) = \phi(gY)^{-1}\pi(g)\phi(Y). \label{Eq: Equivalence cocycles}
 \end{equation}
According to \cite[Lemma B.8]{zimmer2013ergodic}, we can therefore find a $G$-invariant Borel subset $\Omega \subset \Omega_X$ of measure $1$ and a Borel map $s':\Omega \rightarrow G$ such that for all $Y \in \Omega$ and almost all $g \in G$, (\ref{Eq: Equivalence cocycles}) holds at $(g,s'(Y)Y)$ - in other words, 
\begin{equation}
\tau \circ \alpha_s(g,s'(Y)Y) = \phi(gs'(Y)Y)^{-1}\pi(g)\phi(s'(Y)Y). \label{Eq: Equivalence cocycles 2}
\end{equation} 
Note moreover that $s'(Y)=e$ for almost all $e \in \Omega$. By a further Fubini argument, the $G$-invariant subset $\{Y \in \Omega : s'(gY)=e \text{ for almost all } g \in G\}$ is Borel and co-null in $\Omega$. So we may assume that it is equal to $\Omega$. For all $Y \in \Omega$, set $s''(Y)=s'(Y)^{-1}s(s'(Y)Y)$. Since $s$ is a Borel section we get, 
$$s''(Y) =s'(Y)^{-1}s(s'(Y)Y) \in s'(Y)^{-1}s'(Y)Y = Y.$$
So $s''$ is a Borel section as well. Fix $Y \in \Omega$, for almost all $g \in G$ we have $s'(gY)=e$. Hence,  (\ref{Eq: Equivalence cocycles 2}) holds at $(s'(gY)gs'(Y)^{-1},s'(Y)Y))$ for almost all $g\in G$. So, for all $Y \in\Omega$ and almost all $g \in G$,  (\ref{Eq: Equivalence cocycles 2}) becomes
\begin{equation}
\tau\circ \alpha_{s''}(g,Y) =\tau\circ \alpha_{s}(s'(gY)gs'(Y)^{-1},Y) =  \psi(gY)^{-1}\pi(g)\psi(Y) \label{Eq: Equivalence cocycles 3}
\end{equation}
where $\psi(Y) := \pi(s'(Y))^{-1}\phi(s'(Y)Y)$. Therefore, for all $Y \in \Omega$ and $h \in G$ we have
\begin{align*}
\pi(h)\psi(Y) & = \pi(g)^{-1}\psi(ghY)\tau\circ \alpha_{s''}(gh,Y) &\text{for almost all } g \in G \\
              & = \pi(g)^{-1}\psi(ghY)\tau\circ \alpha_{s''}(g,hY)\tau\circ \alpha_{s''}(h,Y) &\text{for almost all } g \in G \\
              & = \psi(hY)\tau\circ \alpha_{s''}(h,Y)
\end{align*}
 where we have used (\ref{Eq: Equivalence cocycles 3}) in the first line, the cocycle identity to deduce the second line and (\ref{Eq: Equivalence cocycles 3}) again to get to the last line. Notice now that for all $Y \in \Omega$, we have $s(s'(Y)Y)^{-1}s'(Y)s(Y) \in Y^{-1}Y \subset X^{-1}X$. So we can unfold the definitions of $s''$ and $\alpha_{s''}$ to find that, for all $g \in G$ and all $Y \in \Omega$, we have
 $$\tau\circ \alpha_{s}(g,Y) = \chi(gY)^{-1}\pi(g)\chi(Y)$$
 where 
 $$\chi(Y):=\pi(s'(Y))^{-1}\phi(Y)\tau\left(s(s'(Y)Y)^{-1}s'(Y)s(Y)\right).$$ Finally, since for $Y \in \Omega \cap \mathcal{T}_e$ and $g \in T_{h}(Y,\Omega \cap \mathcal{T}_e)$ we have $s(Y)=s(gY)=e$, we get Lemma \ref{Lemma: Consequence superrigidity}.
 \end{proof}

\begin{remark}
One might note that the Borel section chosen at the beginning of Lemma \ref{Lemma: Consequence superrigidity} does not appear in the latter part of the statement. This highlights the easily established fact that if $s'$ is some other Borel section, then $\tau \circ \alpha_s$ and $\tau \circ \alpha_{s'}$ are cohomologous. In addition, the map $\phi$ realising the cohomology relation takes values in $\tau(X^{-1}X)$. 
\end{remark}

We will follow a similar strategy to prove a related result about restrictions of cocycles acting on Banach spaces. We will then use this in our considerations on property (T).

\begin{lemma}\label{Lemma: restriction alpha-invariant vector to transverse}
 Suppose that $\langle X \rangle$ acts by isometries on a Banach space $(B,\ ||\cdot||)$. Let $\phi: \Omega_{X} \rightarrow B$ be a measurable map such that for all $g \in G$ and $\nu$-almost all $Y \in \Omega_X$ we have 
 $$ \alpha_s(g,Y)\phi(Y) = \phi(gY).$$
 Then there are a Borel $G$-invariant subset $\Omega \subset  \Omega_X$ of full measure and $\psi: \Omega \cap \mathcal{T}_e \rightarrow B$ a measurable map such that for all $Y \in \Omega \cap \mathcal{T}_e$ and all $g \in T_h(Y, \Omega \cap \mathcal{T}_e)$ we have 
 $$ g\cdot \psi(Y) = \psi(gY).$$
 Moreover, if $\phi$ takes values in the unit ball of $B$, then so does $\psi$.
 \end{lemma}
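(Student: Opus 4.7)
The plan is to mirror the proof of Lemma~\ref{Lemma: Consequence superrigidity}, replacing the conjugation cocycle identity by the linear one $\alpha_s(g,Y) \phi(Y) = \phi(gY)$. I first choose the Borel section $s$ so that $s(Y) = e$ for every $Y \in \mathcal{T}_e$ (possible because $e \in Y$ on the transverse). With this choice, whenever $Y \in \mathcal{T}_e$ and $g \in T_h(Y, \mathcal{T}_e)$ one has $\alpha_s(g,Y) = g$, so the conclusion will follow once we produce a measurable $\chi : \Omega \to B$ on a conull $G$-invariant subset $\Omega \subset \Omega_X$ satisfying $\alpha_s(g, Y) \chi(Y) = \chi(gY)$ for every $g \in G$ and every $Y \in \Omega$; then $\psi := \chi|_{\Omega \cap \mathcal{T}_e}$ will do the job.

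By Fubini, the set $\{(g,Y) : \alpha_s(g,Y)\phi(Y) = \phi(gY)\}$ is $(\mu_G \otimes \nu)$-conull, so \cite[Lemma B.8]{zimmer2013ergodic} provides a Borel $G$-invariant $\Omega \subset \Omega_X$ of full measure and a Borel map $s' : \Omega \to G$ with $s' = e$ on a conull subset, such that for every $Y \in \Omega$ the identity $\alpha_s(h, s'(Y) Y) \phi(s'(Y) Y) = \phi(h s'(Y) Y)$ holds for almost every $h \in G$. A further Fubini reduction lets me shrink $\Omega$ so that also $s'(gY) = e$ for almost every $g \in G$, for every $Y \in \Omega$. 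Define the Borel section $s''(Y) := s'(Y)^{-1} s(s'(Y) Y) \in Y$ and the cochain $c(Y) := s(Y)^{-1} s''(Y) \in Y^{-1}Y \subset \langle X \rangle$, and set
\[ \chi(Y) := c(Y) \cdot \phi(s'(Y) Y). \]
A direct computation shows $\alpha_s(g,Y) \cdot c(Y) = \alpha_s(gs'(Y)^{-1}, s'(Y) Y)$; substituting $h = gs'(Y)^{-1}$ in the identity above yields, for every $Y \in \Omega$ and almost every $g$,
\[ \alpha_s(g, Y) \chi(Y) = \phi(gY). \]
On the other hand, $s'(gY) = e$ and $c(gY) = e$ for almost every $g$, so $\chi(gY) = \phi(gY)$ as well, and hence $\alpha_s(g,Y) \chi(Y) = \chi(gY)$ for every $Y \in \Omega$ and almost every $g$.

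To upgrade to every $g$, fix $Y \in \Omega$ and $h \in G$. For almost every $g \in G$ the a.e.\ identity applies simultaneously at $(gh, Y)$ and at $(g, hY)$, so the cocycle relation $\alpha_s(gh, Y) = \alpha_s(g, hY)\alpha_s(h, Y)$ gives
\[ \alpha_s(g, hY) \alpha_s(h, Y) \chi(Y) = \chi(ghY) = \alpha_s(g, hY) \chi(hY); \]
cancelling the invertible isometry $\alpha_s(g, hY)$ yields $\alpha_s(h, Y) \chi(Y) = \chi(hY)$ for every $h \in G$, and setting $\psi := \chi|_{\Omega \cap \mathcal{T}_e}$ completes the proof of the equivariance. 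The unit-ball statement is automatic because $c(Y) \in \langle X \rangle$ acts by isometry, so $\|\chi(Y)\| = \|\phi(s'(Y) Y)\|$. The main conceptual step is the shuffling of Borel sections through Zimmer's Lemma B.8; measurability of $\chi$ and strong measurability of $Y \mapsto \alpha_s(g,Y) v$ for $v \in B$ are standard verifications that constitute the only bookkeeping cost.
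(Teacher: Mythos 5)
Your argument is correct and mirrors the paper's own proof step by step: a Fubini reduction, Zimmer's Lemma B.8 to obtain a $G$-invariant conull $\Omega$ and a correcting Borel map $s'$, the auxiliary section $s''(Y)=s'(Y)^{-1}s(s'(Y)Y)$, and the cocycle-identity trick to upgrade the almost-every-$g$ identity to every $g$ before restricting to $\mathcal{T}_e$. The only cosmetic difference is that you keep the original cocycle $\alpha_s$ throughout and twist $\phi(s'(Y)Y)$ by the cochain $c(Y)=s(Y)^{-1}s''(Y)\in Y^{-1}Y\subset\langle X\rangle$, whereas the paper passes to $\alpha_{s''}$ and works with the untwisted $\phi(s'(Y)Y)$; these two bookkeepings are equivalent, and your explicit normalization $s\equiv e$ on $\mathcal{T}_e$ usefully isolates a hypothesis the paper uses tacitly at the final step (to make the reduction fully rigorous you should note that changing $s$ entails replacing $\phi$ by $c(Y)^{-1}\phi(Y)$, which preserves both the equivariance hypothesis and the unit-ball bound).
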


 \begin{proof} 
 Let $\mu_G$ denote the Haar measure on $G$. By a Fubini-type argument there is $\Omega_0 \subset \Omega_{X}$ of full measure such that for all $Y \in \Omega_0$ and almost all $ g \in G$ we have $ \alpha_s(g,Y)\phi(Y) = \phi(gY)$.  By \cite[Lemma B.8]{zimmer2013ergodic} we can find $\Omega \subset \Omega_X$ $G$-invariant and of measure $1$ and a Borel map $s':\Omega \rightarrow G$ such that for all $Y \in \Omega$ and almost all $g \in G$ we have 
 $$ \alpha_s(g,s'(Y)Y)\phi(s'(Y)Y) = \phi(gs'(Y)Y).$$ 
 Set $s''(Y):=s'(Y)^{-1}s(s'(Y)Y)$ for all $Y \in \Omega$.  Then $s''$ is a Borel section of $\Omega$ (Definition \ref{Definition: Section of set of closed subsets}) that satisfies $s''(Y)=e$ for all $Y \in \Omega \cap  \mathcal{T}_e$.
 Furthermore, one can ensure - proceeding as in the proof of Lemma \ref{Lemma: Consequence superrigidity} - that for all $Y \in \Omega$ and almost all $g \in G$ we have
$$\alpha_{s''}(g,Y) \phi(s'(Y)Y)= \phi(s'(gY)gY).$$
 Let $\psi: \Omega \rightarrow B$ be defined by $\psi(Y)= \phi(s'(Y)Y)$. For every $Y \in \Omega$, we have  
 $$\alpha_{s''}(g,Y)^{-1}\psi(gY)=\psi(Y)$$ for almost all $g \in G$ . Moreover, we have for all $Y \in \Omega$ and $h \in G$: 
   \begin{align*}\psi(hY) & = \alpha_{s''}(g,hY)^{-1}\psi(ghY) & \text{ for almost all } g \in G\\
   & = \alpha_{s''}(h,Y)\alpha_{s''}(gh,Y)^{-1}\psi(ghY) & \text{ for almost all } g \in G \\
   & = \alpha_{s''}(h,Y)\psi(Y)  \\
   \end{align*}
   where we went from the first line to the second using the cocycle identity. We conclude as in the proof of Lemma \ref{Lemma: Consequence superrigidity}.
 \end{proof}

 \section{Lagarias-type result in simple Lie groups}\label{Section: Lagarias-type theorem in simple Lie groups}
 We will now prove Theorem \ref{Theorem: Lagarias-type theorem in simple groups}. In the following we will consider a subset $X$ of the group of points $G$ of an absolutely simple algebraic group defined over a local field $k$ of characteristic $0$. Assume also that $\Omega_X$ admits a proper ergodic $G$-invariant Borel probability measure, say $\nu$. We will assume moreover that both $X^{-1}X$ and $XX^{-1}$ are uniformly discrete. We start with a general result concerning equivariant families of discrete subsets. 
\subsection{Superrigidity}

We will now prove the superrigidity theorem needed in the last part of the proof. 

\begin{theorem}\label{Theorem: Superrigidity for return times, version 2}
 Let $X \subset G$ be such that $X^{-1}X$, $XX^{-1}$ are uniformly discrete and $\Omega_X$ admits a proper $G$-invariant Borel probability measure $\nu$. Then there is $S \subset X^{-1}X$  such that for every $\tau: \langle X \rangle \rightarrow \mathbb{H}(k)$ group homomorphism with $k$ a local field and $\mathbb{H}$ a simple algebraic group defined over $k$ such that $\dim_k(\mathbb{H}) \leq \dim(G)$ we have: 
 \begin{enumerate}
 \item either $\tau(X)$ is relatively compact;
 \item or there exists $\pi: G \rightarrow \mathbb{H}(k)$ a continuous group homomorphism such that $X \subset FS $ for some finite set $F$ and $\tau_{|S} =\pi_{|S}$.
 \end{enumerate}
 \end{theorem}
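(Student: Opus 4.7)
The overall strategy is a Margulis-style superrigidity argument adapted to the approximate setting, using the cocycles on the invariant hull introduced in Section 4.4 and the structural results from Section 4.

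First, I would pass to an ergodic component of $\nu$ and choose a Borel section $s\colon\Omega_X\to G$ as in Definition 4.9, so as to obtain the cocycle $\alpha_s$. For any homomorphism $\tau\colon\langle X\rangle\to\mathbb{H}(k)$ as in the hypotheses, the composite $\tau\circ\alpha_s$ is a Borel cocycle $G\times\Omega_X\to\mathbb{H}(k)$. Since $G$ is simple of $k$-rank at least $2$ and the dimension condition $\dim_k\mathbb{H}\leq\dim G$ forbids the target from being ``too large'', Zimmer's cocycle superrigidity theorem applies and produces the dichotomy of Lemma 4.7: either $\tau\circ\alpha_s$ is cohomologous to a cocycle with values in a compact subgroup of $\mathbb{H}(k)$ (case (i)), or else it is cohomologous, via some measurable $\phi\colon\Omega_X\to\mathbb{H}(k)$, to a continuous homomorphism $\pi\colon G\to\mathbb{H}(k)$ (case (ii)).

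Next, I would invoke Lemma 4.7 to transfer these two alternatives to the canonical transverse $\mathcal{T}_e$. In case (ii) this produces a $G$-invariant full-measure subset $\Omega\subset\Omega_X$ and a measurable map $\psi\colon\Omega\cap\mathcal{T}_e\to\mathbb{H}(k)$ satisfying
$$\tau(g)=\psi(gY)^{-1}\pi(g)\psi(Y)\qquad\text{for all }Y\in\Omega\cap\mathcal{T}_e,\ g\in T_h(Y,\Omega\cap\mathcal{T}_e).$$
By Lemma 4.5 there exists $h\in\mathbb{H}(k)$ every neighbourhood of which pulls back to a non-null Borel subset of $\mathcal{T}_e$ under $\psi$; after conjugating $\pi$ by $h$ and replacing $\psi$ by $h^{-1}\psi$, one may arrange that $h$ is the identity of $\mathbb{H}(k)$. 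For any symmetric neighbourhood $V$ of $e\in\mathbb{H}(k)$, set $B_V:=\psi^{-1}(V)$, a non-null subset of the canonical transverse. Then by Lemma 4.5(3) one has $\mathcal{R}(B_V)\subset X^{-1}X$, and for every $g\in\mathcal{R}(B_V)$ one has $\tau(g)\in V^{-1}\pi(g)V$ since one may pick $Y\in B_V$ with $gY\in B_V$. Moreover, applying Lemma 4.6 with $Z_{0}=X$ (legitimate since $XX^{-1}$ is uniformly discrete) shows that $X$ is covered by a uniformly bounded number of left translates of $T_h(Y,B_V)T_h(Y,B_V)^{-1}\subset\mathcal{R}(B_V)$, with the bound depending only on $\mathcal{P}_X^*\nu(V)$ and in particular independent of $\tau$.

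The main obstacle, and the heart of the proof, is step four: extracting a single $S\subset X^{-1}X$ on which the approximate equation $\tau(g)\in V^{-1}\pi(g)V$ collapses to the exact equation $\tau(g)=\pi(g)$, while preserving the conclusion $X\subset FS$ with $F$ finite. For this I would take a nested sequence $V_{n}\downarrow\{e\}$, observe that the sets $\mathcal{R}(B_{V_{n}})$ are decreasing and each satisfies the covering property with a \emph{uniform} bound on $|F|$ coming from Lemma 4.6, and then invoke a Massicot--Wagner-type extraction (in the spirit of the argument used in the proof of Proposition 4.8, as alluded to in the introduction) together with a covering lemma to pass to a limit $S$ that is still not too small to cover $X$ by finitely many translates. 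On this limiting $S$ one has $\tau|_{S}=\pi|_{S}$ by continuity of $\pi$ and the fact that the neighbourhoods $V_{n}$ shrink to $\{e\}$. The construction of $S$ is made independent of $\tau$ by selecting the transverse data from the dynamics of $(X,\nu)$ alone, with $\pi$ rather than $S$ absorbing the dependence on $\tau$.

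Finally, case (i) is handled by the same scheme: Lemma 4.7(1) gives a $\psi$ conjugating $\tau\circ\alpha_s$ into a compact subgroup $K\subset\mathbb{H}(k)$ on return times of a non-null cross-section, and the uniform cover of $X$ by translates of $\mathcal{R}(B_V)$ provided by Lemma 4.6 then shows that $\tau(X)$ lies in finitely many translates of $K$, hence is relatively compact. I anticipate that the truly delicate point is the simultaneous control, in the non-compact case, of the size of $F$ and of the non-triviality of $S$ as $V_{n}$ shrinks — exactly the kind of double-counting that periodization maps are designed to handle.
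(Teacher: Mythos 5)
Your steps one through three, and the treatment of case (i), follow the paper's scheme: apply cocycle superrigidity to $\tau\circ\alpha_s$, restrict to the canonical transverse via Lemma \ref{Lemma: Consequence superrigidity}, normalize using Lemma \ref{Lemma: Existence support on the canonical transverse}, and use Lemma \ref{Lemma: Key lemma hitting times} to bound covers. The gap is in your step four, which you correctly identify as the heart of the proof but then handle with an argument that does not work.

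Concretely, your proposal is to take a shrinking sequence $V_n\downarrow\{e\}$, set $B_{V_n}=\psi^{-1}(V_n)$, note the uniform bound on the covering constants, and ``pass to a limit'' $S=\bigcap_n\mathcal{R}(B_{V_n})$ by a Massicot--Wagner-type extraction. This fails for two reasons. First, a decreasing sequence of sets each covering $X$ by at most $C$ translates need not have intersection covering $X$ by finitely many translates: the translating sets $F_n$ are not confined to any fixed compact and can drift off to infinity as $V_n$ shrinks (in a toy model, $\mathcal{R}_n=\{k:|k|\ge n\}\cup\{0\}$ in $\mathbb{Z}$ is decreasing, each covering $\mathbb{Z}$ finitely, yet $\bigcap_n\mathcal{R}_n=\{0\}$). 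Second, this construction makes $S$ depend on $\tau$ through $\psi$ and $B_{V_n}$; the theorem requires a single $S$ that works for \emph{every} admissible $\tau$, and your remark that ``$S$ is made independent of $\tau$ by selecting the transverse data from the dynamics alone'' is circular, since the transverse data $B_V=\psi^{-1}(V)$ is precisely what depends on $\tau$.

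The paper's actual resolution is of a different nature. It does not attempt any limit over shrinking neighbourhoods. Instead it invokes a boundary-rigidity criterion established in \cite{machado2020apphigherrank}: if the projection of $Y_0\cap Y_0\gamma^{-1}$ to $K\backslash G$ has closure dense in the visual boundary, then $\tau(\gamma)=\pi(\gamma)$ exactly (not just approximately). The role of $V$ is only to reach the approximate identity $\tau(y)\in K_V\pi(y)V$ with $K_V$ \emph{not} shrinking with $V$; the exact identity comes entirely from the boundary density condition. To produce a $\tau$-independent $S$ on which this condition holds, the paper introduces the $G$-equivariant intersection maps $I_\gamma(Y)=Y\cap Y\gamma^{-1}$, considers the push-forward measures $\nu_\gamma=(I_\gamma)_*\nu$, defines $S=\{x\in X^{-1}X:\nu_x\text{ is proper}\}$ (a definition depending only on the dynamics), and then uses Howe--Moore together with a pointwise ergodic theorem to verify the boundary density condition (*) whenever $\nu_\gamma$ is proper. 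Finally, that $X^{-1}$ is covered by finitely many right-translates of $S$ is obtained not from a limit argument but from a direct quasi-orthogonality computation with periodization maps: if one had infinitely many $x_i\in X^{-1}$ with $\nu_{x_ix_j^{-1}}$ concentrated on $\{\emptyset\}$, the inclusion-exclusion inequality $|YX^{-1}\cap W|\geq\sum_i|Yx_i\cap W|-\sum_{i<j}|Yx_j\cap Yx_i\cap W|$, integrated against $\nu$, would force $C\geq(N+1)\mu_G(W)$ for all $N$, a contradiction. This is the Massicot--Wagner flavour you anticipated, but it appears here, in the construction of $S$ from the intersection dynamics, not in an extraction over shrinking $V_n$.

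In short: your proposal correctly reproduces the reduction to the canonical transverse but misses the boundary rigidity input (which is what actually upgrades $\tau\approx\pi$ to $\tau=\pi$) and substitutes for the intersection-map/ergodic-theorem construction of $S$ a limit argument that is both insufficient to retain the finite covering and intrinsically $\tau$-dependent.
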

 
 \begin{proof}
 Fix a Borel section $s: \Omega_X \rightarrow G$. Since $\dim_k(\mathbb{H}) \leq \dim(G)$ and $G$ is a property (T) simple algebraic group, we can apply \cite{machado2020apphigherrank}. So $\tau \circ \alpha_s$ satisfies the conclusions of Zimmer's cocycle superrigidity. Therefore, we may apply Lemma \ref{Lemma: Consequence superrigidity}.

 If we have a measurable map $\psi: \Omega \cap \mathcal{T}_e \rightarrow \mathbb{H}(k)$ and a compact subgroup $K\subset \mathbb{H}(k)$ such that $\psi(gY)\tau(g)\psi(Y)^{-1} \subset K$ for all $Y \in \Omega \cap \mathcal{T}_e$ and $g \in T_h(Y, \Omega \cap \mathcal{T}_e)$  (part (1) of Lemma \ref{Lemma: Consequence superrigidity}) we will prove that (1) of Theorem \ref{Theorem: Superrigidity for return times, version 2} holds. There is $B \subset \mathcal{T}_e$ non null such that $K':=\overline{\psi(B)}$ is compact (Lemma \ref{Lemma: Existence support on the canonical transverse}). So, if we have $g \in G$, $Z \in B$ and $gZ \in B$, then $\tau(g) \in K'^{-1}KK'$. Therefore, for all $ Y \in \Omega_X$ we have 
 $$\tau(T(Y,B)T(Y,B)^{-1}) \subset K'^{-1}KK'.$$ But $X$ is covered by finitely many translates of $\bigcup_Y T(Y,B)T(Y,B)^{-1}$ (Lemma \ref{Lemma: Key lemma hitting times}), so $\tau(X)$ is relatively compact.
 
 Suppose that we are given a measurable map $\psi: \Omega \cap \mathcal{T}_e \rightarrow \mathbb{H}(k)$ and a continuous group homomorphism $\pi: G \rightarrow \mathbb{H}(k)$ such that for all $Y \in \Omega \cap \mathcal{T}_e$ and $g \in T_h(Y, \Omega \cap \mathcal{T}_e)$ we have $\tau(g) = \psi(gY) ^{-1}\pi(g) \psi(Y)$ (corresponding to part (2) of Lemma \ref{Lemma: Consequence superrigidity}). We will prove that (2) of Theorem \ref{Theorem: Superrigidity for return times, version 2} must hold. We will follow the strategy of \cite{machado2020apphigherrank}. By Lemma \ref{Lemma: Existence support on the canonical transverse} there is $h \in \mathbb{H}(k)$ such that for all compact neighbourhoods $V$ of $h$ there is $B_V \subset \mathcal{T}_e$ non-null such that $\psi(B_V) \subset V$. Upon modifying $\psi$ we may assume without loss of generality that $h=e$. Now for any $Y_0 \in \mathcal{T}_e$ and any $Y \in \Omega_X$ we have that $YY_0^{-1}$ is uniformly discrete. Therefore, for any $V$ neighbourhood of $e$ we know by Lemma \ref{Lemma: Key lemma hitting times} that there is a finite set $F=F(Y_0,V)$ such that:
 $$Y_0 \subset FT_h(Y,B_V)T_h(Y,B_V)^{-1}.$$
Take $y \in Y_0$, then there is $f \in F$ such that $f^{-1}y \in T_h(Y,B_V)T_h(Y,B_V)^{-1}$. By Lemma \ref{Lemma: Miscellaneous hitting times sets},  there is $Y_1 \in B_V$ such that $f^{-1}yY_1 \in B_V$ as well. In particular, this yields
$$\tau(y) = \tau(f) \tau(f^{-1}y) =\tau(f)\psi(f^{-1}yY_1)\pi(f^{-1}y)\psi(Y_1) \in \tau(F)V\pi(F)^{-1}\pi(y)V.$$
Write $K_V = \tau(F)V\pi(F)^{-1}$ then we have the equality
$$\tau(y) \in K_V \pi(y) V$$
for all $y \in Y_0$.

 We have shown in \cite[\S 3.4]{machado2020apphigherrank} that if $\gamma \in G$ satisfies that: the projection of $\pi(Y_0 \cap Y_0\gamma^{-1})$ to $K \setminus H$ has dense closure in the (right-)visual boundary $\partial (K \setminus H)$  - where $K$ denotes a maximal compact subgroup - then $\tau(\gamma)=\pi(\gamma)$. Since $\pi$ is surjective according to our assumption on dimensions, it in fact suffices to show: (*) the projection of $Y_0 \cap Y_0\gamma^{-1}$ to $K \setminus G$ has dense closure in the (right-)visual boundary $\partial (K \setminus G)$  - where $K$ denotes a maximal compact subgroup of $G$ this time. We will show that this condition is fulfilled for many $\gamma \in G$ by invoking pointwise ergodic theorems. 
 
 Note first that the map
 \begin{align*}
 I_{\gamma}:  \Omega_X & \longrightarrow \Omega_X^{ext} \\
   Y & \longmapsto Y \cap Y\gamma^{-1}
 \end{align*} 
 is a well-defined $G$-equivariant map. Let us show that it is Borel when $\gamma \in X$. As a consequence of the uniform discreteness of $XX^{-1}$ there is a neighbourhood of the identity $W \subset G$ such that for all $Y \in \Omega_X$ and all $g \in G$ we have $|YX^{-1} \cap gW| \leq 1$. So 
 \begin{align*}
  |Y \cap gW| + |Y\gamma^{-1} \cap gW| - |Y \cap Y\gamma^{-1} \cap gW| &=  |Y \cup Y\gamma^{-1} \cap gW| \\
  &  \leq |YX^{-1} \cap gW| \\
  & \leq 1.
  \end{align*}
 Therefore, $ Y \cap Y\gamma^{-1} \cap gW$ is non-empty if and only if both $Y \cap gW$ and $Y\gamma^{-1} \cap gW$ are non-empty. In other words, 
 $$\{Y \in \Omega_X : Y \cap Y\gamma^{-1} \cap gW \neq \emptyset \} = U^{gW} \cap U^{gW\gamma}.$$
 So $I_{\gamma}$ is Borel (see e.g. \cite{machado2020apphigherrank}). As a consequence, the push-forward $\nu_{\gamma}$ of $\nu$ is a $G$-invariant ergodic measure. It is either proper or concentrated on $\{\emptyset\}$.
 
 Fix $D \subset G$ countable such that the sets of $\xi \in \partial\left(K \setminus G\right)$ such that $ x_0 \cdot d^n \rightarrow \xi$ is dense. Here,  $x_0$ denotes the class of $e$ in $(K \setminus G)$. Take $\gamma \in G$ and suppose that $\nu_{\gamma}$ is proper.  Then the semi-group generated by $d^{-1}$ acts ergodically on $(\Omega_X,\nu)$ according to the Howe--Moore property (e.g. \cite[2.2.20]{zimmer2013ergodic}. So for all $W \subset G$ open and relatively compact and almost all $Y \in \Omega_X$ we have that
 $$\lim_{n \rightarrow \infty} \frac{1}{n}\sum_{k=0}^n |d^{-k}(Y \cap Y\gamma^{-1}) \cap W | = \nu(U^W).$$
 In particular, there is a sequence of elements $(y_n)$ of $Y \cap Y\gamma^{-1}$ such that $d(y_n,e) \rightarrow \infty$ as $n$ goes to $\infty$ and $y_n \in d^{i_n}W$ for some $i_n \geq 0$. By considering the above for $W$ running over a countable basis of neighbourhoods of the identity, we find that $\gamma$ satisfies (*) for almost all $Y \in \Omega_X$. 
 
 Let us now show that there are many $\gamma \in G$ such that $\nu_{\gamma}$ is proper. Define $$S := \{ x \in X^{-1}X : \nu_x \text{ is proper }\}$$ and fix $W$ a relatively compact open neighbourhood of the identity. If $X^{-1}$ is not covered by finitely many right-translates of $S$, then we can find a sequence $(x_n)_{n \geq 0}$ of elements of $X^{-1}$ such that for any two $i \neq j$ $\nu_{x_ix_j^{-1}}$ is concentrated on $\{\emptyset\}$. In particular,
  $$\int_{\Omega_X} |Yx_j \cap Yx_i \cap W| d\nu(Y) = \int_{\Omega_X} |Y \cap Yx_ix_j^{-1} \cap Wx_j^{-1}|d\nu(Y) =  0$$
  for all $i,j \geq 0$. But for all $N \geq 0$ and $Y \in \Omega_X$ we have:
 
 \begin{equation}
 |YX^{-1} \cap W | \geq |\bigcup_{i=0}^N Yx_i \cap W| \geq \sum_{i=0}^N |Yx_i \cap W| - \sum_{0\leq i < j \leq N}|Yx_j \cap Yx_i \cap W|. \label{Eq: Massicot wagner invariant hull}
 \end{equation}
 Write $\mu_G$ the pull-back of $\nu$ via the periodization map. We know that $\mu_G$ is a Haar measure. In addition, since $XX^{-1}$ is uniformly discrete $|YX^{-1} \cap W | \leq C$ for some constant $C$. Integrating (\ref{Eq: Massicot wagner invariant hull}) against $\nu$ we find 
 
 $$ C \geq \sum_{i=0}^N \mu_G(Wx_i^{-1}) = (N+1)\mu_G(W)$$ 
 for all $N \geq 0$. A contradiction. So there is $F \subset X^{-1}$ finite such that $X^{-1} \subset SF$. Since $\tau_{|S} = \pi_{|S}$, we conclude that $S$ is the subset we are looking for.
 \end{proof}
 
\subsection{Finite generation and property (T)}

The goal of this section is to prove the following key result. 

\begin{proposition}\label{Proposition: Finite generation Lagarias-type theorem}
With $X$ as in Theorem \ref{Theorem: Lagarias-type theorem in simple groups}. The subgroup $\langle X\rangle$ is finitely generated. 
\end{proposition}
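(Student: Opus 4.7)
The strategy is to use Kazhdan's property (T) of $G$, which holds since $\mathbb{G}$ is a simple algebraic group of $k$-rank at least $2$. I argue by contradiction: supposing $\Gamma := \langle X \rangle$ is not finitely generated, I will produce a unitary representation of $G$ with almost invariant vectors but no non-zero invariant vector. Enumerate $X = \{x_1, x_2, \ldots\}$ and set $\Gamma_n := \langle x_1, \ldots, x_n \rangle$; passing to a subsequence, one may assume $\Gamma_n \subsetneq \Gamma_{n+1}$. If any $\Gamma_n$ had finite index in $\Gamma$, then $\Gamma$ would be finitely generated by generators of $\Gamma_n$ together with finitely many coset representatives; hence $[\Gamma : \Gamma_n] = \infty$ for all $n$, and each quasi-regular representation $\rho_n : \Gamma \to U(\ell^2(\Gamma/\Gamma_n))$ admits no non-zero $\Gamma$-invariant vector.

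I fix a Borel section $s : \Omega_X \to G$ as in Definition \ref{Definition: Section of set of closed subsets} and note that the cocycle $\alpha_s$ takes values in $\Gamma$: writing $s(gY) = gy'$ for some $y' \in Y$, one finds $\alpha_s(g, Y) = y'^{-1} s(Y) \in Y^{-1}Y \subset X^{-1}X \subset \Gamma$, the middle inclusion using uniform discreteness (hence closedness) of $X^{-1}X$. For each $n$, this lets one form the induced unitary representation
\[
\tilde{\rho}_n : G \to U\bigl(L^2(\Omega_X, \ell^2(\Gamma/\Gamma_n), \nu)\bigr), \qquad (g \cdot \phi)(Y) := \rho_n\bigl(\alpha_s(g, g^{-1}Y)\bigr)\phi(g^{-1}Y),
\]
whose unitarity follows from the cocycle identity and $G$-invariance of $\nu$. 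Almost invariant vectors for $\bigoplus_n \tilde{\rho}_n$ arise from the constant sections $\phi_n(Y) := \delta_{\Gamma_n}$: one computes $\|g \cdot \phi_n - \phi_n\|^2 = 2\nu\{Y : \alpha_s(g, g^{-1}Y) \notin \Gamma_n\}$, and since $\alpha_s$ is a Borel map into the countable set $\Gamma$, for any compact $K \subset G$ and any $\epsilon > 0$ one finds a finite $F \subset \Gamma$ with $(\mu_G \otimes \nu)\{(g, Y) \in K \times \Omega_X : \alpha_s(g, Y) \notin F\} < \epsilon$. For $n$ large so that $F \subset \Gamma_n$, averaging $\phi_n$ by convolution with $\mathbf{1}_K/\mu_G(K)$ yields $(K, \epsilon)$-almost invariant unit vectors. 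Property (T) therefore forces some $\tilde{\rho}_n$ to admit a non-zero $G$-invariant vector $\Phi$, and Lemma \ref{Lemma: restriction alpha-invariant vector to transverse} converts $\Phi$ into a measurable $\psi : \Omega \cap \mathcal{T}_e \to \ell^2(\Gamma/\Gamma_n)$ with $\psi(gY) = \rho_n(g)\psi(Y)$ whenever $Y, gY \in \Omega \cap \mathcal{T}_e$. By ergodicity of the transverse measure $\eta$, the norm $\|\psi(Y)\| = c$ is $\eta$-a.e.\ a positive constant.

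The main obstacle is the final step: deducing from the equivariant $\psi$ a non-zero $\rho_n$-invariant vector in $\ell^2(\Gamma/\Gamma_n)$, which would contradict $[\Gamma : \Gamma_n] = \infty$. My plan is to consider the $\ell^1$-function $h$ on $\Gamma/\Gamma_n$ defined by $h(\gamma\Gamma_n) := \int_{\mathcal{T}_e} |\psi(Y)(\gamma\Gamma_n)|^2\, d\eta(Y)$, of total mass $c^2\eta(\mathcal{T}_e)$, and to use the partial equivariance of $\psi$ combined with invariance of $\eta$ under the partial $\Gamma$-action to bound $|h(\gamma'\gamma\Gamma_n) - h(\gamma\Gamma_n)|$ by $c^2$ times the $\eta$-measure of the complement of the partial action's domain. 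Controlling these defects via Lemma \ref{Lemma: Key lemma hitting times} and the uniform discreteness of $X^{-1}X$, and iterating along chains in $X^{-1}X$, one upgrades this to exact $\Gamma$-invariance of $h$; being $\ell^1$ and $\Gamma$-invariant on the infinite coset space $\Gamma/\Gamma_n$, $h$ must vanish identically, contradicting $c > 0$.
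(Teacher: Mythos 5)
Your setup — passing to a chain $\Gamma_n \subsetneq \Gamma_{n+1}$ of infinite-index subgroups, forming the cocycle-induced $G$-representations on $L^2(\Omega_X,\ell^2(\Gamma/\Gamma_n),\nu)$, verifying almost invariance via the Borel cocycle $\alpha_s$, invoking property (T) of $G$ to get a non-zero $G$-invariant section, and restricting to the transverse via Lemma \ref{Lemma: restriction alpha-invariant vector to transverse} — is essentially the same pipeline as the paper's (the paper packages the first few steps as a citation to the standard fact that $\bigoplus_\Delta L^2(\langle X\rangle/\Delta)$ almost has $\langle X\rangle$-invariant vectors, then applies its Proposition \ref{Proposition: Heredity Lagarias-type property (T)}). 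The problem is your final step.

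You want to show that $h(\gamma\Gamma_n):=\int_{\mathcal{T}_e}|\psi(Y)(\gamma\Gamma_n)|^2\,d\eta(Y)$ is exactly $\Gamma$-invariant. The equivariance $\psi(gY)=\rho_n(g)\psi(Y)$ is only available when both $Y$ and $gY$ lie in $\Omega\cap\mathcal{T}_e$; writing $A_\gamma:=\{Y\in\mathcal{T}_e:\gamma Y\in\mathcal{T}_e\}$, what the change of variables actually yields is
$$\int_{A_\gamma}|\psi(Y)(\gamma^{-1}\sigma\Gamma_n)|^2\,d\eta(Y)=\int_{\gamma A_\gamma}|\psi(Y)(\sigma\Gamma_n)|^2\,d\eta(Y),$$
leaving error terms supported on $\mathcal{T}_e\setminus A_\gamma$ and $\mathcal{T}_e\setminus\gamma A_\gamma$. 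There is no mechanism making $\eta(\mathcal{T}_e\setminus A_\gamma)$ small, let alone zero: $A_\gamma$ is the intersection $\mathcal{T}_e\cap\mathcal{T}_{\gamma^{-1}}$ of two transversals, and for a generic aperiodic $X$ this intersection has strictly smaller $\eta$-mass than $\mathcal{T}_e$. Iterating along chains in $X^{-1}X$ makes the defects accumulate, not cancel. And the conclusion you are aiming for is actually too strong: if $X$ ends up contained in finitely many cosets of a finitely generated $\Delta$ (which is what Proposition \ref{Proposition: Finite generation Lagarias-type theorem} together with $\Gamma=\langle X\rangle$ implies), the quasi-regular representation on $\ell^2(\Gamma/\Delta)$ can have a vector $\xi$ with $\rho(X)\xi$ relatively compact without possessing any $\Gamma$-invariant vector at all.

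The paper's way around this is precisely the content of Proposition \ref{Proposition: Heredity Lagarias-type property (T)}: instead of averaging $\|\psi(\cdot)\|^2$ over $\mathcal{T}_e$, apply Lemma \ref{Lemma: Existence support on the canonical transverse} to select a ``density value'' $\xi\in\ell^2(\Gamma/\Gamma_n)$ such that every ball $B(\xi,\epsilon)$ has non-null preimage $\mathcal{T}_{e,\epsilon}$ under $\psi$; then Lemma \ref{Lemma: Key lemma hitting times} covers $X$ by finitely many left-translates of $T_h(Y,\mathcal{T}_{e,\epsilon})T_h(Y,\mathcal{T}_{e,\epsilon})^{-1}$, and on the latter set the equivariance of $\psi$ gives $\rho_n(g)\xi\in B(\xi,2\epsilon)$. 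This yields relative compactness of $\rho_n(X)\xi$ — a weaker, but achievable, conclusion — which, combined with the pointwise structure of $L^2$ on the countable set $\Gamma/\Delta$ (the subsets $X(\delta,\phi)$ and the finiteness of $\phi^{-1}([\alpha/2,\infty))$), pins $X$ down to finitely many cosets of $\Delta$. Your plan needs to be replaced by this localization argument; the $\ell^1$-averaging cannot be upgraded to exact invariance.
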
 

To prove Proposition \ref{Proposition: Finite generation Lagarias-type theorem} we follow the same strategy as in \cite{machado2020apphigherrank} with the additional input of Lemma \ref{Lemma: restriction alpha-invariant vector to transverse} to help us restrict the cocycle we consider to the canonical transverse.
 
 \begin{proposition}\label{Proposition: Heredity Lagarias-type property (T)}
 Let $\langle X \rangle$ act on a separable Hilbert space $\mathcal{H}$ with almost invariant vectors. Then there is $\xi \in \mathcal{H}$ a unit vector such that $X \cdot \xi$ is relatively compact in the norm topology. 
 \end{proposition}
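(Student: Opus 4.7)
The plan is to adapt the cocycle-rigidity strategy of Theorem~\ref{Theorem: Superrigidity for return times, version 2}, invoking property (T) of $G$ in place of Zimmer's cocycle superrigidity theorem. Write $\pi \colon \langle X \rangle \to U(\mathcal{H})$ for the given representation and fix a Borel section $s \colon \Omega_X \to G$ with associated cocycle $\alpha_s$. I would first construct the twisted unitary $G$-representation $\rho$ on $L^2(\Omega_X, \nu; \mathcal{H})$ defined by
\[
(\rho(g)F)(Y) \;=\; \pi\bigl(\alpha_s(g, g^{-1}Y)\bigr)\, F(g^{-1}Y),
\]
which is unitary thanks to the $G$-invariance of $\nu$ and the unitarity of $\pi$.

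The first substantive step is to promote almost invariant vectors of $\pi|_{\langle X \rangle}$ to almost invariant vectors of $\rho$. The key observation is that $Y^{-1}Y \subset X^{-1}X$ for every $Y \in \Omega_X$, since $X^{-1}X$ is closed in $G$ (being uniformly discrete) and $(g_n X)^{-1}(g_n X) = X^{-1}X$ along any Chabauty-convergent sequence. Combined with $\alpha_s(g, Y) \in Y^{-1}Y$, this produces, for every compact $Q \subset G$, a \emph{finite} subset $F_Q \subset X^{-1}X$ containing all values $\alpha_s(g, Y)$ for $(g, Y) \in Q \times \Omega_X$. Constant sections $F_n(Y) \equiv \xi_n$, with $\xi_n \in \mathcal{H}$ almost $F_Q$-invariant unit vectors for $\pi$, then satisfy $\|\rho(g)F_n - F_n\|_{L^2} \leq \max_{w \in F_Q} \|\pi(w)\xi_n - \xi_n\|$ uniformly on $Q$, which is arbitrarily small by choice of $\xi_n$.

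Property (T) of $G$ then promotes almost invariance to invariance, yielding a non-zero measurable $\phi \colon \Omega_X \to \mathcal{H}$ with $\pi(\alpha_s(g,Y))\phi(Y) = \phi(gY)$ for every $g$ and $\nu$-a.e.\ $Y$. Ergodicity of $\nu$ and unitarity of $\pi$ force $\|\phi(\cdot)\|$ to be essentially constant, so I may rescale to $\|\phi(Y)\| = 1$ a.e. Lemma~\ref{Lemma: restriction alpha-invariant vector to transverse} then restricts $\phi$ to a measurable, unit-valued $\psi \colon \Omega \cap \mathcal{T}_e \to \mathcal{H}$ with $\pi(g)\psi(Y) = \psi(gY)$ for $g \in T_h(Y, \Omega \cap \mathcal{T}_e)$.

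Finally, Lemma~\ref{Lemma: Existence support on the canonical transverse} provides a unit vector $\xi_0 \in \mathcal{H}$ for which $B_\epsilon := \psi^{-1}(B(\xi_0, \epsilon)) \cap \Omega$ is non-null for every $\epsilon > 0$, and Lemma~\ref{Lemma: Key lemma hitting times} (applied with $Z_0 = X$, after a harmless left-translation so that $e \in X$) yields a finite $F_\epsilon \subset G$ and some $Y \in \Omega_X$ with $X \subset F_\epsilon\, T_h(Y, B_\epsilon)\, T_h(Y, B_\epsilon)^{-1}$. Writing each $x \in X$ as $x = f g_1 g_2^{-1}$ with $g_1Y, g_2Y \in B_\epsilon$, the equivariance $\pi(g_1 g_2^{-1})\psi(g_2Y) = \psi(g_1Y)$ combined with the isometry of $\pi(g_1 g_2^{-1})$ gives $\|\pi(g_1 g_2^{-1})\xi_0 - \psi(g_1Y)\| \leq \epsilon$, so $\pi(x)\xi_0 \in \bigcup_{f \in F_\epsilon} B(\pi(f)\xi_0, 2\epsilon)$; the orbit is covered by finitely many balls of any prescribed radius, so it is totally bounded in norm, hence relatively compact. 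The step I expect to be the most delicate is the promotion of $\pi$-almost invariance to $\rho$-almost invariance: it crucially rests on the global inclusion $Y^{-1}Y \subset X^{-1}X$ and on uniform discreteness to force $\alpha_s$ over each compact window of $G$ to take values in finitely many elements of $\langle X \rangle$.
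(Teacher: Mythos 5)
Your proposal follows essentially the same route as the paper: construct an $\alpha_s$-equivariant unit section $\phi$ on $\Omega_X$, push it down to the canonical transverse via Lemma~\ref{Lemma: restriction alpha-invariant vector to transverse}, locate a point of ``density'' $\xi_0$ by Lemma~\ref{Lemma: Existence support on the canonical transverse}, and then use Lemma~\ref{Lemma: Key lemma hitting times} together with the equivariance on hitting times to cover $\pi(X)\xi_0$ by finitely many $2\epsilon$-balls for every $\epsilon$. Steps two through four coincide with the paper's proof almost line for line, including the harmless translation so that $e\in X$.

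The one place where your argument departs from the paper is the construction of $\phi$, which the paper simply delegates to Zimmer's induction argument (citing \cite{zimmer2013ergodic, machado2020apphigherrank, MR4245602}) while you try to spell it out. There is a genuine gap in your version. You claim that for each compact $Q\subset G$ the cocycle values $\alpha_s(g,Y)$, $(g,Y)\in Q\times\Omega_X$, lie in a \emph{finite} subset $F_Q\subset X^{-1}X$, inferring this from $\alpha_s(g,Y)\in Y^{-1}Y\subset X^{-1}X$ and uniform discreteness. But uniform discreteness only makes $X^{-1}X$ locally finite; it does not confine $\alpha_s(Q\times\Omega_X)$ to a compact window. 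Since $\alpha_s(g,Y)=s(gY)^{-1}gs(Y)$, such a confinement would require the Borel section $s$ to be essentially bounded, which fails precisely when $X$ is only of finite co-volume rather than relatively dense (for $\nu$-positive sets of $Y\in\Omega_X$ one may have $Y\cap K=\emptyset$ for any prescribed compact $K$, forcing $s(Y)\notin K$). The standard repair is measure-theoretic temperedness: for each $\delta>0$ choose a compact $K$ with $\nu(\{Y: s(Y)\notin K\})<\delta$, restrict attention to the set $\{Y: s(Y)\in K,\ s(gY)\in K\ \forall g\in Q\}$ on which $\alpha_s(g,Y)$ does lie in the finite set $X^{-1}X\cap K^{-1}QK$, and absorb the $O(\delta)$ exceptional contribution into the $L^2$-estimate for $\|\rho(g)F_n-F_n\|$. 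As written, your sentence ``uniform discreteness \dots\ to force $\alpha_s$ over each compact window of $G$ to take values in finitely many elements'' is false without that extra step, and since you flag this as your most delicate point, it deserves to be handled correctly rather than asserted.
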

 
 \begin{proof}
 Following Zimmer's approach \cite{zimmer2013ergodic} (see also \cite{machado2020apphigherrank}), we can find a non-trivial $\phi: \Omega_X \rightarrow G$ such that for all $g \in G$ and $\nu$-almost all $Y  \in \Omega_X$ we have
 $$ \pi \left(\alpha_s(g,Y)\right) \left(\phi(Y)\right) = \phi(gY).$$
 
 In particular, $||\phi(Y)|| =||\phi(gY)||$. So $||\phi(Y)||$ is constant for almost all $Y$ by ergodicity of $\nu$. We may therefore assume that $\phi$ takes values in the set of unit vectors of $\mathcal{H}$. By Lemma \ref{Lemma: restriction alpha-invariant vector to transverse} there are thus $\Omega \subset \Omega_X$ that is $G$-invariant and of full measure and $\psi: \Omega \cap \mathcal{T}_e \rightarrow \mathcal{H}$ measurable such that 
 $$ \pi(g)\cdot \psi(Y) = \psi(gY)$$
 for all $Y \in \Omega \cap \mathcal{T}_e$ and $g\in T_h(Y, \Omega \cap \mathcal{T}_e)$. 
 
 Take $\xi \in \mathcal{H}$ as provided by Lemma \ref{Lemma: Existence support on the canonical transverse}. Then for every $\epsilon > 0$ the subset $$\mathcal{T}_{e,\epsilon}:=\psi^{-1}(B(\xi, \epsilon))$$ is non-null. According to Lemma \ref{Lemma: Key lemma hitting times}, 
 $$X \subset FT_h(Y, \mathcal{T}_{e,\epsilon})T_h(Y, \mathcal{T}_{e,\epsilon})^{-1}$$ for some finite subset $F$ and some $Y \in \Omega$. For all $g \in  T_h(Y, \mathcal{T}_{e,\epsilon})T_h(Y, \mathcal{T}_{e,\epsilon})^{-1}$ we can find $Y' \in \mathcal{T}_{e,\epsilon}$ such that $gY' \in \mathcal{T}_{e,\epsilon}$. So $$\psi(gY') = \pi\left(g\right)\left(\psi(Y')\right).$$ Since $\psi(Y'), \psi(gY') \in B(\xi, \epsilon)$, we have $\pi(g)(\xi) \in B(\xi, 2\epsilon)$. To sum up, we have $\pi(X)(\xi) \subset \pi(F)(B(\xi, 2\epsilon))$. Since there is such an $F$ for every $\epsilon$, and $\mathcal{H}$ is complete, $\pi(X)(\xi)$ is relatively compact.
 \end{proof}
 
 \begin{proof}[Proof of Proposition \ref{Proposition: Finite generation Lagarias-type theorem}]
 The proof of \cite[Prop. 13]{machado2020apphigherrank} holds almost verbatim. We reproduce it here for the sake of completeness.  Let $\mathcal{H}$ denote $\bigoplus_{\Delta \subset \langle X \rangle \text{ f.g.}} L^2(\langle X \rangle/\Delta)$. The sum of quasi-regular representations $\left(\pi, \mathcal{H}\right)$ almost has invariant vectors (e.g. \cite{MR2415834}). So we can find $\phi \in \mathcal{H}$ such that $X \cdot \phi$ has compact closure in the norm topology. Upon projecting $\phi$ to a well-chosen factor of $\mathcal{H}$, we may assume that $\phi \in  L^2(\langle X \rangle/\Delta)$ for some finitely generated subgroup $\Delta$ of $\langle X \rangle$. Now, define $$X(\delta, \phi):=\{x \in X^{-1}X : ||x\cdot \phi - \phi|| \leq \delta\}.$$
 Then $(X(\delta, \phi))_{\delta > 0}$ is a family of subsets such that $X$ is covered by finitely many translates of $X(\delta, \phi)$ and $X(\delta, \phi)$ is contained in $X^{-1}X$ (see \cite{machado2020apphigherrank}). Now let $p: \langle X\rangle \rightarrow \langle X\rangle/\Delta$ denote the natural projection. Take $\gamma \in \langle X \rangle$ such that $\phi(p(\gamma))=\alpha > 0$. So for all $x \in X(\alpha/2,\phi)$ we have 
  $$| \phi(p(x^{-1}\gamma))- \phi(p(\gamma))| \leq ||\pi(x)(\phi)-\phi ||< \alpha/2,$$
 meaning $x^{-1}\gamma \in \phi^{-1}([\alpha/2; +\infty))$. Since $\phi^{-1}([\alpha/2; +\infty))$ is finite, we can find a finite set $F$ of representatives of $\phi^{-1}([\alpha/2; +\infty))$ in $\langle X \rangle$. Then $x^{-1} \gamma \Delta \cap F \Delta \neq \emptyset$ and $X(\alpha/2,\phi)$ is contained in $F\Delta\gamma^{-1}$. But there is a finite subset $F' \subset \langle \Lambda \rangle$ such that $X \subset F'X(\alpha/2, \phi) \subset F'F\Delta\gamma^{-1}$. Since $$X \subset F' \cup F \cup \Delta \cup \{\gamma^{-1}\} \subset \langle X \rangle,$$
  $F' \cup F \cup \Delta \cup \{\gamma^{-1}\}$ generates $\langle X \rangle$. But $\Delta$ is finitely generated. So $\langle X \rangle$ is finitely generated.
 \end{proof}

\subsection{Proof of Theorem \ref{Theorem: Lagarias-type theorem in simple groups}}

\begin{proof}
Suppose as we may that $G$ is a closed subgroup of   $\GL_n(k)$ for some $n$. According to Proposition \ref{Proposition: Finite generation Lagarias-type theorem}  the subgroup $\langle X \rangle$ is finitely generated. So the field $K$ generated by entries of elements of $\langle X \rangle$ is finitely generated. Let $S \subset XX^{-1}$ be the subset provided by Theorem \ref{Theorem: Superrigidity for return times, version 2}. We will show that $S^m$ is uniformly discrete for all $m \geq 0$. Let $W$ be a compact neighbourhood of the identity and take an integer $m \geq 0$. If $S^{2m} \cap W$ is infinite, then the set $E$ of entries of elements in $S^{2m} \cap W$ is infinite as well. By a result of Breuillard--Gelander \cite{MR1966638}, there is a local field $l$ and a field embedding $\sigma: K \rightarrow l$ such that $\sigma(E)$ is unbounded. Denote by $\sigma_0$ the natural group homomorphism $\GL_m(K) \rightarrow \GL_m(l)$ given by applying $\sigma$ entry-wise. Then $\sigma_0(S^{2m} \cap W)$ is unbounded. As $$\sigma_0(S^{2m} \cap W) \subset \sigma_0\left((XX^{-1})^{2m}\right),$$
the subset $\sigma_0(X)$ must be unbounded. Moreover, the Zariski-closure of $\sigma_0(\langle X \rangle)$ is semi-simple and has dimension at most $\dim G$. By Theorem \ref{Theorem: Superrigidity for return times, version 2}, there is a continuous group homomorphism $\pi: G \rightarrow \GL_m(l)$ such that $\left(\sigma_0\right)_{|\langle S \rangle} = \pi_{|\langle S \rangle}$. But
$$\sigma_0\left(S^{2m} \cap W \right) = \pi\left(S^{2m} \cap W \right) \subset \pi(W).$$
So $\sigma_0\left(S^{2m} \cap W \right)$ is bounded. A contradiction. 

Therefore $S^{2m} \cap W$ is finite for all $m \geq 0$. So $S^m$ is uniformly discrete for all $m \geq 0$. But there is $F$ finite such that $X \subset FS$. By Lemma \ref{Lemma: finite co-volume passes to commensurability} there is thus a Borel subset $\mathcal{F}$ of finite Haar measure such that $S^2 \mathcal{F} = G$. Since $S^5$ is uniformly discrete, we find that $S^2$ is an approximate subgroup as a consequence of Lemma \ref{Lemma: Hull and commensurability}. Hence, $S^2$ is an approximate lattice. 

\end{proof}

\begin{remark}
This last part of the argument is the only one that is not quantitative. We do not know if this argument can be made quantitative as we heavily rely on the superrigidity theorem which is, by essence, global.
\end{remark}

\section{Acknowledgement}
I am deeply indebted to Tobias Hartnick for inspiring discussions during a stay at the Karlsruhe Institute of Technology. I want to thank Matthew Tointon for bringing to my attention Konieczny's paper and the discussion that followed. I thank Michael Bj\"{o}rklund, Tobias Hartnick and Yakov Karasik for communicating their result. 


\end{document}